\RequirePackage{luatex85}
\documentclass[]{amsart}
\usepackage{fontspec}
\usepackage{mathcmd}
\usepackage{graphicx}
\usepackage{mathptmx}
\usepackage{mathtools}
\usepackage{amssymb}
\usepackage{amsmath}
\usepackage{amsfonts}
\usepackage{latexsym}
\usepackage{amsthm}
\usepackage[utf8]{inputenc}

\usepackage[all]{xy}

\usepackage{appendix}

\usepackage[T1]{fontenc}
\usepackage{lmodern}
\usepackage[english]{babel}

\usepackage{blindtext}

\def \tt#1{\text{\rm #1}}

\def \etale {\'{e}tale }

\def \ab {\tt{ab}}

\def \Aut {\tt{Aut}}

\def \Inn {\tt{Inn}}
\def \Out {\tt{Out}}

\def \Z {\mathbb{Z}}

\def \abt {\text{\rm ab-t}}
\def \sp {\tt{Spec}}
\def \cusp {\text{\rm Cusp}}
\def \id {\tt{Id}}
\def \T {\tt{T}}

\def \root {\tt{Root}}

\def \Cusp {\text{\rm Cusp}}

\def \diag {\tt{diag}}

\def \card {\tt{Card}}

\def \obj {\tt{Obj}}

\def \hom {\tt{Hom}}

\def \sch {\tt{Sch}}

\def \set {\tt{set}}

\def \gal {\tt{Gal}}

\def \sche {\tt{Scheme}}
\def \base {\tt{Base}}
\def \pr {\tt{pr}}
\def \surjto {\twoheadrightarrow}
\def \injto {\hookrightarrow}
\def \cl {\tt{cpt}}
\def \cavc {\mathfrak{CAVC}}
\def \ig {\mathfrak{IG}}
\def \mf {\mathfrak}

\def \simto {\xrightarrow{\sim}}
\def \cov {\tt{cov}}
\def \bc {\tt{b-c}}
\def \uc {\tt{UC}}

\usepackage{cleveref}
\crefname{section}{\S}{\S}
\Crefname{section}{\S}{\S}

\crefname{theo}{Theorem}{Theorem}
\Crefname{theo}{Theorem}{Theorem}
\crefname{defi}{Definition}{Definition}
\Crefname{defi}{Definition}{Definition}
\crefname{prop}{Proposition}{Proposition}
\Crefname{prop}{Proposition}{Proposition}

\crefname{lemm}{Lemma}{Lemma}
\Crefname{lemm}{Lemma}{Lemma}

\crefname{coro}{Corollary}{Corollary}
\Crefname{coro}{Corollary}{Corollary}

\crefname{remm}{Remark}{Remark}
\Crefname{remm}{Remark}{Remark}

\crefname{thma}{Theorem A}{Theorem A}
\Crefname{thma}{Theorem A}{Theorem A}

\crefname{thma0}{Theorem A}{Theorem A}
\Crefname{thma0}{Theorem A}{Theorem A}

\crefname{thma1}{Theorem B}{Theorem B}
\Crefname{thma1}{Theorem B}{Theorem B}

\crefname{thma2}{Theorem C}{Theorem C}
\Crefname{thma2}{Theorem C}{Theorem C}

\theoremstyle{theorem}
\newtheorem{lemm}{Lemma}[section]
\newtheorem{theo}[lemm]{Theorem}
\newtheorem{prop}[lemm]{Proposition}
\newtheorem{coro}[lemm]{Corollary}

\newtheorem*{thma'}{Theorem A$'$}
\newtheorem*{thmb}{Theorem D} 

\newtheorem*{thmc'}{Theorem C$'$}

\newtheorem*{thma0}{Theorem A}
\newtheorem*{thma1}{Theorem B}
\newtheorem*{thma2}{Theorem C}

\theoremstyle{remark}
\newtheorem{remm}[lemm]{Remark}

\theoremstyle{definition}
\newtheorem{defi}[lemm]{Definition}

\usepackage{lipsum}

\newcommand{\longtitle}[1]{
	\ifodd\value{page}
	\protect\parbox{0.97\linewidth}{#1}
	\else
	\fi
}

\calclayout

\title[The Absolute Anabelian Geometry of Virtual Curves of Arbitrary Genus]{The Absolute Anabelian Geometry of Virtual Curves of Arbitrary Genus}
\author{Zeming Sun}

\begin{document}

\maketitle


\section{Introduction}\label{Introductions}

The objective of this paper is to further study the \emph{pointed virtual curves} defined in \cite{SZM}, and to introduce the notion of 
an inclusion of \emph{CAVC-type} (cf. \cref{defi_CAVCs}(ii); here, ``CAVC'' stands for ``cuspidal-admissible virtual curve'') 
and the notion of a \emph{virtual decuspidaloid} (cf. \cref{construction_CAVC}). 
Roughly speaking, the notion of an inclusion of CAVC-type is a group-theoretic abstraction of the inclusion of the geometric virtual fundamental group $\Delta_{\mathcal{C}}$ of a pointed virtual curve $\mathcal{C}$ into the virtual fundamental group $\Pi_{\mathcal{C}}$ (cf. \cref{Notations and Terminologies} the discussion entitled ``Virtual Curves''), while 
the notion of a virtual decuspidaloid is a categorical-theoretic abstraction of the decuspidalization operation on virtual 
fundamental groups of pointed virtual curves developed in [\cite{SZM}, \S5]. 

Our initial theorem extends [\cite{SZM}, Theorem A] from the genus-zero situation to curves of arbitrary genus. Whereas [\cite{SZM}, Theorem A] concerns hyperbolic curves of genus $0$ under suitable hypotheses, we now distinguish the following three mutually exclusive cases, where $g$ denotes the genus and $r$ denotes the number of cusps (cf. \cref{Notations and Terminologies}, the discussion entitled ``Fields, Schemes and Curves''):
\begin{description}
    \item[Case 0] $g=0$ and $r\geq 3$ (cf. Theorem A);
    \item[Case 1] $g=1$ and $r\geq 1$ (cf. Theorem B);
    \item[Case 2] $g\geq 2$ (cf. Theorem C).
\end{description}

\begin{thma0}\label{a0}(cf. 
[\cite{SZM}, Theorem 6.15(i)(iv)(v)]; \cref{reconstruction_of_section_special_sym}(iii).)

    Let $k$ be either a number field or a mixed-characteristic local field; $\bar{k}$ an algebraic closure of $k$; $G_{k}$ the Galois group $\gal(\bar{k}/k)$; $X$ a hyperbolic curve over $k$ of type $(0,r)$, where $r\geq 3$; $s:G_{k}\rightarrow \Pi_{X}$ a section of the natural surjection $\Pi_{X}\surjto G_{k}$. Write $\Pi$ for the virtual fundamental group $\Pi_{[\pr^{2/1}_{X},s]}$; $\Delta$ for the geometric virtual fundamental group $\Delta_{[\pr^{2/1}_{X},s]}$
    (cf. \cref{Notations and Terminologies}, the discussion entitled ``Configuration Spaces''; \cref{Notations and Terminologies}, the discussion entitled ``Virtual Curves'').
    Then the following assertions hold:

        (i) One may construct a field 
        \begin{displaymath}
            \base(\Pi)	
	\end{displaymath}
	from the abstract profinite group $\Pi$ functorially with respect to isomorphisms of topological groups such that the action of $\Pi$ on $\base(\Pi)$ functorially induced by the conjugation action of $\Pi$ on itself factors through the quotient $\Pi\surjto G_{k}$. Moreover, there exists a $G_{k}$-equivariant isomorphism $\base(\Pi)\simto \bar{k}$.

	(ii) The isomorphism class of the function field $K(X)$ of $X$ is determined by the isomorphism class of the abstract profinite group $\Pi$.
	
	(iii) Suppose that $X\simto\mathbb{P}^{1}_{k}\setminus\{0,1,\infty\}$. Recall that the group of automorphisms $\Aut(\Pi_{X})$ of the profinite group $\Pi_{X}$ acts naturally on the set of sections of the surjection $\Pi_{X}\surjto G_{k}$, and that there exists a natural isomorphism $\Aut(X)\simto \Out(\Pi_{X})$ (cf. [\cite{MZK3}, Theorem 2.6(v)(vi)]; [\cite{MZK2}, Theorem 1.9]). Then the $\Aut(\Pi_{X})$-orbit of the section $s$ is determined by the isomorphism class of the abstract profinite group $\Pi$. Moreover, the isomorphism class of the scheme $X$ is determined by the isomorphism class of the abstract profinite group $\Pi$.

\end{thma0}

\begin{thma1}\label{a1} (cf. \cref{reconstruction_of_section_special_sym}(iv); \cref{reconstruction_algorithm_genus_1}; \cref{validity_of_descent_genus_1}.)

    Let $k$ be either a number field or a mixed-characteristic local field; $\bar{k}$ an algebraic closure of $k$; $G_{k}$ the Galois group $\gal(\bar{k}/k)$; $X$ a hyperbolic curve over $k$ of type $(1,r)$, where $r\geq 1$; $s:G_{k}\rightarrow \Pi_{X}$ a section of the natural surjection $\Pi_{X}\surjto G_{k}$. Write $\Pi$ for the virtual fundamental group $\Pi_{[\pr^{2/1}_{X},s]}$; $\Delta$ for the geometric virtual fundamental group $\Delta_{[\pr^{2/1}_{X},s]}$; $X^{\cl}$ for the unique smooth compactification of $X$ (cf. \cref{Notations and Terminologies}, the discussion entitled ``Configuration Spaces''; \cref{Notations and Terminologies}, the discussion entitled ``Virtual Curves''). Assume further that $X$ admits a $k$-rational point. Then the following assertions hold:

        (i) One may construct a field 
        \begin{displaymath}
            \base(\Pi)	
	\end{displaymath}
	from the abstract profinite group $\Pi$ functorially with respect to isomorphisms of topological groups such that the action of $\Pi$ on $\base(\Pi)$ functorially induced by the conjugation action of $\Pi$ on itself factors through the quotient $\Pi\surjto G_{k}$. Moreover, there exists a $G_{k}$-equivariant isomorphism $\base(\Pi)\simto \bar{k}$.

        (ii) One may construct a hyperbolic curve
        \begin{displaymath}
            f: \sche(\Pi) \to \sp(\base(\Pi))
        \end{displaymath}
        over $\base(\Pi)$
        from the abstract profinite group $\Pi$ functorially with respect to isomorphisms of topological groups such that the action of $\Pi$ on $\sche(\Pi)$ functorially induced by the conjugation action of $\Pi$ on itself factors through the quotient $\Pi\surjto G_{k}$.
        Denote by $(\sche(\Pi))^{\cl}$ the unique smooth compactification of $\sche(\Pi)$.
        There exists a $G_{k}$-equivariant isomorphism $(\sche(\Pi))^{\cl}\simto X^{\cl}\times_{k}\bar{k}$ compatible with the isomorphism $\base(\Pi)\simto \bar{k}$ of (i). In particular, the isomorphism class of the function field $K(X)$ of $X$ is determined by the isomorphism class of the abstract profinite group $\Pi$.
	
	(iii) Suppose that $r=1$. Recall that the group of automorphisms $\Aut(\Pi_{X})$ of the profinite group $\Pi_{X}$ acts naturally on the set of sections of the surjection $\Pi_{X}\surjto G_{k}$, and that there exists a natural isomorphism $\Aut(X)\simto \Out(\Pi_{X})$ (cf. [\cite{MZK3}, Theorem 2.6(v)(vi)]; [\cite{MZK2}, Theorem 1.9]; [\cite{RNSPM}, Theorem D]). Then the $\Aut(\Pi_{X})$-orbit of the section $s$ is determined by the isomorphism class of the abstract profinite group $\Pi$. Moreover, the isomorphism class of the scheme $X$ is determined by the isomorphism class of the abstract profinite group $\Pi$.

\end{thma1}

\begin{thma2}\label{a2}(cf. \cref{reconstruction_of_section_special_sym}(v); \cref{reconstruction_algorithm_genus_2}.)

    Let $k$ be either a number field or a mixed-characteristic local field; $\bar{k}$ an algebraic closure of $k$; $G_{k}$ the Galois group $\gal(\bar{k}/k)$; $X$ a hyperbolic curve over $k$ of type $(g,r)$, where $g\geq 2$; $s:G_{k}\rightarrow \Pi_{X}$ a section of the natural surjection $\Pi_{X}\surjto G_{k}$. Write $\Pi$ for the virtual fundamental group $\Pi_{[\pr^{2/1}_{X},s]}$; $\Delta$ for the geometric virtual fundamental group $\Delta_{[\pr^{2/1}_{X},s]}$; $X^{\cl}$ for the unique smooth compactification of $X$ (cf. \cref{Notations and Terminologies}, the discussion entitled ``Configuration Spaces''; \cref{Notations and Terminologies}, the discussion entitled ``Virtual Curves''). Then the following assertions hold:

        (i) One may construct a field 
        \begin{displaymath}
            \base(\Pi)	
	\end{displaymath}
	from the abstract profinite group $\Pi$ functorially with respect to isomorphisms of topological groups such that the action of $\Pi$ on $\base(\Pi)$ functorially induced by the conjugation action of $\Pi$ on itself factors through the quotient $\Pi\surjto G_{k}$. Moreover, there exists a $G_{k}$-equivariant isomorphism $\base(\Pi)\simto \bar{k}$.

        (ii) One may construct a hyperbolic curve
        \begin{displaymath}
            f: \sche(\Pi) \to \sp(\base(\Pi))
        \end{displaymath}
        over $\base(\Pi)$
        from the abstract profinite group $\Pi$ functorially with respect to isomorphisms of topological groups such that the action of $\Pi$ on $\sche(\Pi)$ functorially induced by the conjugation action of $\Pi$ on itself factors through the quotient $\Pi\surjto G_{k}$. Moreover, there exists a $G_{k}$-equivariant isomorphism $\sche(\Pi)\simto X^{\cl}\times_{k}\bar{k}$ compatible with the isomorphism $\base(\Pi)\simto \bar{k}$ of (i). In particular, the isomorphism class of the function field $K(X)$ of $X$ is determined by the isomorphism class of the abstract profinite group $\Pi$.

        (iii) Denote by $\Pi^{\cl}$ the quotient $\Pi/(\cusp(\Pi)\coloneqq \cusp([\pr^{2/1}_{X},s])$ of $\Pi$ (cf. [\cite{SZM}, Definition 4.19(i)]; \cref{Notations and Terminologies}, the discussion entitled ``Groups and Topologies''). Then one may construct a field
        \begin{displaymath}
            \uc(\Pi)
        \end{displaymath}
        (where ``UC'' stands for ``universal covering''),
        together with a field inclusion
        \begin{displaymath}
            h: K(\sche(\Pi)) \injto \uc(\Pi)
        \end{displaymath}
        (where $K(\sche(\Pi))$ denotes the function field of $\sche(\Pi)$), from the abstract profinite group $\Pi$ functorially with respect to isomorphisms of topological groups such that the action of $\Pi$ on $\sche(\Pi)$ functorially induced by the conjugation action of $\Pi$ on itself factors through the quotient $\Pi\surjto \Pi^{\cl}$. Note that $\Pi^{\cl}$ is naturally isomorphic to the \etale fundamental group $\Pi_{X^{\cl}}$ (cf. the argument given in step (b) of \cref{reconstruction_algorithm_genus_2}). Let $\uc$ be an FFUC of $X^{\cl}$ (cf. \cref{Notations and Terminologies}, the discussion entitled ``Fundamental Groups''). Then there exists a $\Pi$-equivariant isomorphism $\uc(\Pi)\simto \uc$ compatible with the isomorphism $\sche(\Pi)\simto X^{\cl}\times_{k}\bar{k}$ of (ii).
	
	(iv) Suppose that $r=0$. Recall that the group of automorphisms $\Aut(\Pi_{X})$ of the profinite group $\Pi_{X}$ acts naturally on the set of sections of the surjection $\Pi_{X}\surjto G_{k}$, and that there exists a natural isomorphism $\Aut(X)\simto \Out(\Pi_{X})$ (cf. [\cite{MZK3}, Theorem 2.6(v)(vi)]; [\cite{MZK2}, Theorem 1.9]; [\cite{RNSPM}, Theorem D]). Then the $\Aut(\Pi_{X})$-orbit of the section $s$ is determined by the isomorphism class of the abstract profinite group $\Pi$. Moreover, the isomorphism class of the scheme $X$ is determined by the isomorphism class of the abstract profinite group $\Pi$.

\end{thma2}

Out next result is a criterion about the ``geometricity'' of certain virtual curves, which is an interesting application of [\cite{RNSPM}, Theorem F].

\begin{thmb}\label{14to04}(cf. \cref{real_thmb}.)

    Let $k$ (respectively, $k'$) be either a number field or a mixed-characteristic local field; $\bar{k}$ (respectively, $\bar{k}'$) an algebraic closure of $k$ (respectively, $k'$); $G_{k}$ (respectively, $G_{k'}$) the Galois group $\gal(\bar{k}/k)$ (respectively, $\gal(\bar{k}'/k')$); $X$ (respectively, $X'$) a hyperbolic curve over $k$ (respectively, $k'$) of type $(0,3)$ (respectively, type $(1,3)$); $s:G_{k}\rightarrow \Pi_{X}$ (respectively, $s':G_{k'}\rightarrow \Pi_{X'}$) a section of the natural surjection $\Pi_{X}\surjto G_{k}$ (respectively, $\Pi_{X'}\surjto G_{k'}$). Write $\Pi$ (respectively, $\Pi'$) for the virtual fundamental group $\Pi_{[\pr^{2/1}_{X},s]}$ (respectively, $\Pi_{[\pr^{2/1}_{X'},s']}$); $\Delta$ (respectively, $\Delta'$) for the geometric virtual fundamental group $\Delta_{[\pr^{2/1}_{X},s]}$ (respectively, $\Delta_{[\pr^{2/1}_{X'},s']}$). Assume that there exists a open group injection $f:\Pi'\injto \Pi$ such that $f^{-1}(\Delta)=\Delta'$ (where we note that in the case where $k$ is an NF, the assumption $f^{-1}(\Delta)=\Delta'$ is satisfied automatically by [\cite{SZM}, Theorem 5.1]). Then the following assertions hold:

    (i) $f$ induces an injection of $g:\Delta'\injto \Delta$ index $2$.

    (ii) $s'$ arises from a $k'$-rational point.

    (iii) The inclusion $\Delta'\injto \Pi'$ is constructible (cf. \cref{defi_constructible_inclusion}(ii)).

    (iv) The inclusion $\Delta\injto \Pi$ is constructible.

    (v) $s$ arises from a $k$-rational point.
\end{thmb}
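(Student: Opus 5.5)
I would begin by pinning down what $\Delta$ and $\Delta'$ are as abstract profinite groups. As I understand the construction of [\cite{SZM}], the geometric virtual fundamental group $\Delta_{[\pr^{2/1}_{X},s]}$ is the geometric fundamental group of the fiber of $\pr^{2/1}_{X}$ over the point determined by $s$. For $X$ of type $(0,3)$, this fiber is a curve of type $(0,4)$; for $X'$ of type $(1,3)$, it is a curve of type $(1,4)$. Each such group is a surface group with a determined (log-)Euler characteristic $\chi=2-2g-r$, and this is an invariant of the profinite group (cf. [\cite{SZM}]). So $\chi(\Delta)=-2$ and $\chi(\Delta')=-4$.

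For (i), $f$ is open and $f^{-1}(\Delta)=\Delta'$, so $f$ restricts to an injection $g:\Delta'\injto\Delta$ whose image is $f(\Pi')\cap\Delta$. This image is open in $\Delta$, since $f(\Pi')$ is open in $\Pi$. Because the Euler characteristic is multiplicative in open subgroups, the index is $[\Delta:g(\Delta')]=\chi(\Delta')/\chi(\Delta)=2$, which gives (i).

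Next I would use the index-$2$ inclusion to produce a double cover. Because $\Delta'\subset\Delta$ is stable under $f(\Pi')$, it corresponds to a degree-$2$ cover $Y'\to Y$, where $Y$ is the fiber curve of type $(0,4)$. This cover is defined over the finite extension cut out by the image of $f(\Pi')\to G_{k}$. Comparing invariants, a degree-$2$ cover of a $(0,4)$-curve with $(g,r)=(1,4)$ must be ramified at all four cusps and unramified over them; this is the classical elliptic double cover of $\mathbb{P}^1$ branched at four points.

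The key input is then [\cite{RNSPM}, Theorem F]. I take it to be the statement that the cuspidal/geometric structure of such a situation is group-theoretic, and that a section compatible with it is geometric. Applied here, I expect it to show that the section $s'$, which is encoded in how $\Pi'$ sits over $G_{k'}$, must come from a point. The reason is that the fiber structure is now visibly governed by a $2$-torsion/ramification configuration that is preserved by the injection $f$. This gives (ii).

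From (ii), the inclusion $\Delta'\injto\Pi'$ arises from the geometric configuration $\pr^{2/1}$ at a rational point, and that is exactly the constructibility criterion of \cref{defi_constructible_inclusion}(ii), giving (iii). For (iv), I would transport constructibility along $f$: by (i), $\Delta$ is recovered from $g(\Delta')$ as the unique index-$2$ overgroup normalized by $f(\Pi')$. I would then check that constructibility is inherited by finite-index overgroups, or else descend via the quotient of the elliptic curve by $[-1]$. Finally, (v) follows from (iv) by the converse direction of the definition of constructibility, namely that a constructible inclusion comes from a rational point (possibly after invoking [\cite{SZM}, Theorem 5.1] in the number-field case).

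The main obstacle is (ii): I must show that the ramification of the double cover at the four cusps forces the section to be cuspidal or point-theoretic, rather than merely packed-geometric. I would first try to identify $s'$ with a section fixing a cusp-type decomposition group of the fiber and then invoke Theorem F. A secondary obstacle is the descent step from (iii) to (iv).
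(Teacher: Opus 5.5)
Your argument for (i) matches the paper's (type computation plus Hurwitz), and (iii) does follow from (ii) as you say. The genuine gap is (ii), and (iii)--(v) all rest on it.

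In the form actually available here (\cref{devissage_of_section}(ii)), [\cite{RNSPM}, Theorem F] does not say that a section ``compatible with the cuspidal structure'' is geometric. It says only this: if $X'\injto W$ is an open immersion of hyperbolic curves and the \emph{image} of $s'$ in $\Pi_{W}$ is already known to arise from a rational point, then $s'$ arises from a rational point. So you must exhibit a partial compactification of $X'$ in which the image of $s'$ is visibly geometric, and nothing in your proposal produces one. That the double cover is branched at the four cusps is a property of $\Delta'\subset\Delta$, and on its own it says nothing about $s'$. Nor can you transport anything along $f$. The diagonal cusp $\diag_{X'}$ of $\Pi'$ need not correspond to $\diag_{X}$; it can lie over any of the four cusps of $\Pi$. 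In any case $s$ is not known to be geometric either.

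The missing idea is the generic symmetry of \cref{generic_symmetry_genus_0}: for $X\cong\mathbb{P}^{1}_{k}\setminus\{0,1,\infty\}$, the Klein group $J$ acts on $[\pr^{2/1}_{X},s]$ over $X$ and acts simply transitively on its four cusps. The argument then runs as follows.

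First reduce to the case where $[\Pi:f(\Pi')]=2$ and $J$ stabilizes $f(\Pi')$. This uses the uniqueness of the type-$(1,4)$ index-$2$ subgroup of $\Delta$ (\cref{characterstic_of_1_4}, with [\cite{SZM}, Proposition 6.8(iv)]) and a finite base change $k''/k$. In this case the four cusps of $\Pi'$ correspond bijectively to those of $\Pi$, and some $\theta\in J$ swaps $\diag_{X'}$ with a non-diagonal cusp $\alpha$.

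Filling in $\diag_{X'}$ gives the split family $[\pr^{1}_{X'},s']$, which is constructible with group $\Pi_{X'}$. Transporting by $\theta$, filling in $\alpha$ also gives a constructible object: an honest $(1,3)$-curve in which $\diag_{X'}$ is an honest cusp.

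Consider the normalizer of the diagonal inertia group. By \cref{reconstruction_of_section}(viii), its image after filling in $\diag_{X'}$ is $s'$. Its image in the $\alpha$-filled object is a cuspidal decomposition group. So after filling in both $\diag_{X'}$ and $\alpha$, its image is the decomposition group of a rational point of the resulting $(1,2)$-curve, which is also $X'$ with $\alpha$ filled in. Only at this point does [\cite{RNSPM}, Theorem F(ii)] apply. One then descends from $k''$ by the injectivity part of the section conjecture (\cref{devissage_of_section}(i)).

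Two smaller points. First, (v) is not a ``converse of the definition'' of constructibility. You need that in a GD of $\Delta\subset\Pi$ the diagonal normalizer is a cuspidal decomposition group, whose image after filling in the diagonal is $s$ by \cref{reconstruction_of_section}(viii). Second, for (iv), ``inherited by overgroups'' requires showing that the involution $\omega$ induced on the canonical $(1,4)$-curve $Y$ acts freely on $Y$. Your $[-1]$ idea does give this: $\omega$ fixes the four cusps of $Y^{\cl}$, and by Hurwitz an involution of a genus-one curve has at most four fixed points.
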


This paper is organized as follows. In \cref{section_A Review and Generalization of Previous Results}, we review several classical anabelian results, as well as results developed in \cite{SZM}, and introduce various versions of the notion of an inclusion of \emph{CAVC-type}. In \cref{section_virtual_decuspidaloid}, we introduce the notion of a virtual decuspidaloid and examine associated group-theoretic constructions. In \cref{section_symmetry}, we study generic symmetries of configuration spaces. Finally, in \cref{section_reconstruction}, we carry out the reconstruction process. 


\section{Notations and Terminologies}\label{Notations and Terminologies}

~\\
\textbf{Groups and Topologies:}

Throughout the paper, homomorphisms of topological groups will be assumed, without further mention, to be continuous
(unless specified otherwise).


Let $S$ be a set. Then denote by $\Aut_{\set}(S)$ the group of automorphisms of $S$; denote by $\card(S)$ the cardinality of $S$.

Write $\{ 1 \}$ for the unit group.

Let $n$ be a positive integer. Then denote by $\mathbb{S}_{n}$ the permutation group on $n$ letters.

Let $G$ be a group that acts on a field $k$. Then denote by $k^G\subseteq k$ the subfield of elements fixed by $G$.

Let $G$ be a group; $H\subset G$ a subgroup of $G$. Then we say that $H$ is of \emph{index} $n$ (respectively, $H$ is of \emph{finite index}; $H$ is of \emph{infinite index}) in $G$ if the cardinality of the set of left cosets $G/H$ is $n$ (respectively, finite; infinite). 

Let $H$ be a subgroup of finite index of a group $G$. Then denote by $[G:H]$ the index of $H$ in $G$.

Let $f:H\to G$ be a group homomorphism. Then we say that $f$ is of \emph{index} $n$ (respectively, $f$ is of \emph{finite index}; $f$ is of \emph{infinite index}) if $f(H)$ is of index $n$ (respectively, $f(H)$ is of finite index; $f(H)$ is of infinite index) in $G$.

Let $H\subset G$ and $H'\subset G'$ be inclusions of groups such that $H$ (respectively, $H'$) is normal in $G$ (respectively, $G'$). Then a \emph{morphism of inclusions} from $H\subset G$ to $H'\subset G'$ is defined to be a group homomorphism $f:G\to G'$ such that $f^{-1}(H') = H$. 
Moreover, in this situation, denote by $f|_{H}$ the naturally induced group homomorphism from $H$ to $H'$.
For simplicity, we shall use the notation $f:G\to G'$ for a morphism of inclusions when there is no danger of confusion.

Let $G$ be a group and $S$ a set of subsets of $G$. Define $G/S$ to be the quotient group of $G$ by the normal subgroup generated by the union of all subsets in $S$, i.e.,
\begin{displaymath}
    G/S \coloneqq G / \langle \bigcup_{A \in S, g\in G} g\cdot A\cdot g^{-1} \rangle,
\end{displaymath}
where $\langle \bigcup_{A \in S, g\in G} g\cdot A\cdot g^{-1} \rangle$ denotes the subgroup of $G$ generated by the union of the $G$-conjugates of the subsets in $S$.  If $S$ is a set of sets of subsets of $G$, then we shall write $G/S$ for the quotient group $G/T$, where $T$ denotes the 
set of subsets of $G$ obtained by associating to each element of $S$ the union of the subsets of $G$ that constitute that element of $S$. 

Let $G$ be a topological group. 
Denote by $G^{\ab}$ the maximal abelian Hausdorff quotient of the topological group $G$. Denote by $G^{\abt}$ the maximal torsion-free Hausdorff quotient of $G^{\ab}$.
Note that if $G$ is profinite, then both $G^{\ab}$ and $G^{\abt}$ admit naturally induced profinite structures. If $f:G\rightarrow H$ is a homomorphism of profinite groups, we denote the naturally induced morphism by $f^{\abt}:G^{\abt}\rightarrow H^{\abt}$.

Let $G$ be a topological group; $A$ a topological $G$-module. Denote by $A_{G}$ the maximal Hausdorff quotient of $A$ on which $G$ acts trivially; denote by $A^{G}$ the topological submodule of $A$ consisting of the elements fixed by $G$.

Let $G$ be a topological group. Then denote by $\Aut(G)$ the group of automorphisms of the topological group $G$; denote by $\Inn(G)$ the group of inner automorphisms of $G$; denote by $\Out(G)$ the group of outer automorphisms of $G$ (i.e., the quotient $\Aut(G)/\Inn(G)$). Suppose further that $G$ is \emph{topologically finitely generated}. Then $G$ admits a topological basis of characteristic open subgroups, which induces natural profinite topologies on $\Aut(G)$ and $\Out(G)$.

Let $G$ be a group; $H$ a subgroup of $G$. Then denote by $N_{G}(H)$ the normalizer of $H$ in $G$.


~\\
\textbf{Fields, Schemes and Curves:}

Let $k$ be a field. Then we say that $k$ is an NF if $k$ is a number field; 
we say that $k$ is an MLF if $k$ is a mixed-characteristic local field; 
we say that $k$ is an AF (arithmetic field) if $k$ is either an NF or an MLF.

Let $k$ be a field; $G$ a group acting faithfully on $k$. 
Let $X$ be a scheme over $k$ equipped with a $G$-action compatible with the $G$-action on $k$. 
Suppose that $X$ is covered by affine open subschemes $U$ that are stabilized by the action of 
$G$.  Then denote by $X^{G}$ the scheme obtained from $X$ by gluing together the 
rings of $G$-invariants of $\Gamma(U,\mathcal{O}_U)$ for such affine open subschemes $U$. 

Let $k$ be a topological field (such as a field equipped with the discrete topology). 
Then denote by $\Aut(k)$ the group of continuous automorphisms of the topological 
field $k$.

Let $X$ be an integral scheme. Then denote by $K(X)$ the function field of $X$.

Let $X$ and $Y$ be smooth curves. Then we say that a morphism $f:X\to Y$ is a decuspidalization if $f$ is an open immersion.

Let $X$ be a smooth curve over a field $k$; $\bar{k}$ an algebraic closure of $k$. 
Then denote by $X^{\cl}$ the (unique) smooth compactification of $X$. Write $\bar{X}\coloneqq X\times_{k} \bar{k}$, $\bar{X}^{\cl}\coloneqq X^{\cl}\times_{k} \bar{k}$. Write $g(X)$ for the genus of $\bar{X}^{\cl}$; $r(X)$ for the cardinality of the set of points in $\bar{X}^{\cl}\setminus\bar{X}$ (i.e., the set of \emph{geometric points} of $X^{\cl}\setminus X$). We refer to the pair $(g(X),r(X))$ as the \emph{type} of $X$. We say that a smooth curve is \emph{hyperbolic} if it is of type $(g,r)$, where $2g+r\geq 3$.

Let $Y$ be an abelian variety over a field $k$; $f:Y\simto Y$ an automorphism of the $k$-scheme 
$Y$. Then we say that $f$ is a translation of $Y$ if there exists a point $y\in Y(k)$ such that $f$ is the morphism $x\mapsto x+y$. We remark that if one endows $Y$ with another group scheme structure (possibly having a different identity 
element), then the resulting set of translations, viewed as a subset of $\Aut(Y)$, coincides 
with the original one [\cite{Mumf}, Chapter II, \S 4, Corollary 1]. Therefore, the notion of 
a translation is well-defined without reference to a particular choice of group structure on $Y$.

We refer to [\cite{MZK9}, \S 0, the discussion entitled ``Curves''] for the definition of a family of hyperbolic curves of type $(g,r)$. The scheme denoted by ``$Y$'' in the cited discussion shall be regarded as the compactification of the family of curves under consideration. Moreover, we note that, as stated in the same discussion, if the base scheme is normal, then the compactification of a family of hyperbolic curves of type $(g,r)$ is unique up to canonical isomorphism. 

Let $p$ be either a prime number or $0$. Denote by $\bar{\mathbb{Q}}_{(p)}$ the following \emph{topological} field:
\begin{itemize}
    \item the algebraic closure of $\mathbb{Q}_{p}$, equipped with the topology induced by $\mathbb{Q}_{p}$, if $p$ is a prime number;
    \item the algebraic closure of $\mathbb{Q}$, equipped with trivial topology, if $p = 0$.
\end{itemize}
Moreover, we refer to such fields as \emph{algebraically closed arithmetic fields} (abbreviated as \emph{ACAF}). 
Let $K$ be an ACAF. Then we define the \emph{characteristic prime} of $K$ to be the unique integer $p$ such that 
there exists an isomorphism of topological fields $K\simto \bar{\mathbb{Q}}_{(p)}$. We remark that the {\it minimal 
closed subfield} of $K$ exists and is naturally isomorphic to $\mathbb{Q}$ (respectively, $\mathbb{Q}_{p}$) if the characteristic prime 
of $K$ is zero (respectively, a prime number).

Let $K$ be an ACAF; $k$ the minimal closed subfield of $K$; $G$ a profinite group; $\Aut(K)$ the group of 
field automorphisms of the topological field $K$; $f$ a group homomorphism $G\to \Aut(K)$ of abstract groups 
(i.e., disregarding the topology on $G$). Note that $\Aut(K)$ is naturally isomorphic, as an abstract group, to 
$\gal(K/k)$. Equip $\Aut(K)$ with the natural topology (i.e., the Krull topology) of $\gal(K/k)$. We say that $f$ is 
\emph{field-theoretic} (abbreviated as FT) if $f$ is of finite index and determines a homeomorphism of $G$ onto its 
image in $\Aut(K)$.
Let $k'$ denote the finite extension of $k$ fixed by the image of $f$. We refer to $k'$ as the \emph{fixed field} of $f$.

Let $G$ be a profinite group; $K$ an ACAF; $f:G\to \Aut(K)$, $f':G\to \Aut(K)$ FT morphisms. Then we say that $f$ and $f'$ 
are {\it conjugate} if there exists an \emph{inner} automorphism $\sigma:\Aut(K)\simto \Aut(K)$ of $\Aut(K)$ such that $f' = \sigma\circ f$.

~\\
\textbf{Virtual Curves:}

We refer to [\cite{SZM}, Definition 3.1; 
\cite{SZM}, Definition 3.5] for the definitions of virtual varieties, pointed virtual curves, virtual fundamental groups, and geometric virtual fundamental groups. Moreover, we observe that the definition of a family of hyperbolic curves of type $(g,r)$ given in 
\cite{SZM} differs, at first glance, from the definition adopted in the present paper, but one verifies immediately that 
the two definitions are equivalent in the case where the base scheme is reduced and connected. 
Moreover, [\cite{MZK9}, \S \, 0, the discussion entitled ``Curves''] implies that the object denoted by ``$Z$'' in 
[\cite{SZM}, Proposition 5.3] is unique up to canonical isomorphism.

We refer to [\cite{SZM}, Proposition 5.3; \cite{SZM}, Definition 5.4] for the definitions of the notion of a \emph{decuspidalization} of 
pointed virtual curves and the notion of the \emph{conjugacy class} determined by such a decuspidalization. Furthermore, we observe that in [\cite{SZM}, Definition 5.4(ii)], the notion of the conjugacy class determined by a decuspidalization of pointed virtual curves admits a natural generalization to decuspidalizations of arbitrary degree, where the unique conjugacy class is replaced by a subset of the set of conjugacy classes of cuspidal inertia subgroups. If this subset has cardinality $1$, then we may still refer to its unique element as the conjugacy class determined by the respective decuspidalization.

Let $\mathcal{C}:(X,Y,f,k,\bar{k},t,s)$ be a virtual variety. Suppose that $h:X\to X$ is a scheme automorphism satisfying $f\circ h = f$. Then $\mathcal{F}\coloneqq(h,\id_{Y},\id_{k})$, where $\id_{Y}$ (respectively, $\id_{k}$) denotes the identity on $Y$ (respectively, $k$), defines an automorphism of the virtual variety $\mathcal{C}$. We say that $\mathcal{F}$ is induced by $h$. For convenience, when no confusion arises, we shall also denote this automorphism by $h$.

~\\
\textbf{Fundamental Groups:}

For a connected scheme $X$ equipped with a geometric point $\bar{x}\rightarrow X$, denote by $\pi_{1}(X,\bar{x})$ the \etale fundamental group of $X$ with basepoint $\bar{x}$.

Throughout the paper, we omit the basepoint since there exists a natural isomorphism between fundamental groups associated with different basepoints, well-defined up to an inner automorphism. We denote the \'{e}tale fundamental group of $X$ with basepoint omitted as $\Pi_{X}$. If $X$ is the spectrum of a field $k$, we also use the notation $G_{k}$ (i.e., the ``absolute Galois group of $k$'') to denote $\Pi_{X}$. Let $X$ be a scheme that is geometrically connected over a field $k$. Then denote by $\Delta_{X}$ the kernel of the natural surjection $\Pi_{X}\surjto G_{k}$, i.e., the \emph{geometric fundamental group}.

Any morphism $f:X\rightarrow Y$ of connected schemes induces an outer homomorphism $\Pi_{X}\rightarrow \Pi_{Y}$, which we denote by $f_{\ast}$.

Let $k$ be a field of characteristic zero; $X$ a smooth hyperbolic curve over $k$. Then denote by $\cusp(X)$ the set of conjugacy classes of cuspidal inertia subgroups of $\Pi_{X}$, i.e., the intersections of decomposition groups of cusps with $\Delta_{X}$, the geometric fundamental group of $X$.

Let $X$ be an integral scheme over a field of characteristic zero. Define the \emph{function field of the universal pro-\etale covering} of $X$, abbreviated as the \emph{FFUC} of $X$, to be the direct limit of the function fields $K(Y)$, where $Y$ ranges over all finite \etale coverings of $X$.


Let $f:X\to S$ be a family of hyperbolic curves; $X\injto Y\to S$ a compactification; $D\coloneqq Y\setminus X$ the relative divisor of $Y$ induced by $X$. Assume further that $S$ is a Noetherian separated normal connected scheme over a field of characteristic zero. 
Note that an \etale covering $h:X'\to X$ naturally induces a family $X'\to S'$ of hyperbolic curves, together with an \etale covering $i:S'\to S$ (cf. [\cite{Hos3}, Proposition 2.3]). 
Let $X'\injto Y'\to S'$ be a compactification. 
Then $f$ induces a proper morphism $h:Y'\to Y$ such that $h^{-1}(X)=X'$ (cf. [\cite{Hos3}, Proposition 2.3]). 
Define the \emph{set of geometric cusps} associated to the family $f$ of hyperbolic curves to be the inverse limit $\tilde{D}$ of the sets of the irreducible components of the inverse images of $D$ in the (unique) compactifications of various finite subcoverings of the universal profinite \etale covering of $X$ corresponding to $\Pi_{X}$.
Let $x\in \tilde{D}$. Then the \emph{decomposition group} associated to $x$ of $\Pi_{X}$ is defined to be the stabilizer of $x\in \tilde{D}$ with respect to the natural action of $\Pi_{X}$ on $\tilde{D}$. The \emph{inertia group} associated to $x$ of $\Pi_{X}$ is defined to be the kernel of the natural map from the decomposition group associated to $x$ of $\Pi_{X}$ to $\Pi_{S}$.
Denote by $D(f, \Pi_{X})$ the set of decomposition groups $\subseteq\Pi_{X}$ associated to elements $\in\tilde{D}$; $I(f, \Pi_{X})$ the set of inertia groups $\subseteq\Pi_{X}$ associated to elements $\in\tilde{D}$.
Write $\Delta$ for the kernel of $\Pi_{X}\to \Pi_{S}$. Note that elements of $I(f, \Pi_{X})$ are subgroups of $\Delta$. Define the \emph{set of conjugacy classes of cuspidal inertia groups} associated to the family $f$ of hyperbolic curves to be the set of $\Delta$-conjugacy classes of elements of $I(f, \Pi_{X})$. Denote the set of conjugacy classes of cuspidal inertia groups associated to the family $f$ of hyperbolic curves by $\cusp(f, \Pi_{X})$.

Throughout the paper, when we consider various morphisms and constructions of groups that arise from fundamental groups, 
we shall assume, without further explanation, that those morphisms and constructions are only defined up to an inner 
automorphism.  On the other hand, when we consider groups that may be isomorphic to, but \emph{do not directly arise 
from fundamental groups} (such as the groups that appear in the inclusions of ``CAVC-type'' and ``cls-CAVC-type'' 
discussed in \cref{section_virtual_decuspidaloid}), we work with homomorphisms of groups, rather than homomorphisms 
regarded up to composition with an inner automorphism.

~\\
\textbf{Graphs and Categories:}

Throughout the paper, categories are assumed to be small.

Let $\mf{X}$ be a category. Then we say that a tuple $(\mf{A},A)$ is an \emph{$\mf{X}$-structured category} if $\mf{A}$ is a category and $A: \mf{A}\to \mf{X}$ is a functor. One may say that $(\mf{A},A)$ is a \emph{structured category} when there is no danger of confusion.

Let $\mf{X}$ be a category; $(\mf{A},A)$, $(\mf{B},B)$ two $\mf{X}$-structured categories. We say that a tuple $(F, \tau)$ is a \emph{morphism} from $(\mf{A},A)$ to $(\mf{B},B)$ if $F:\mf{A}\to \mf{B}$ is a functor and $\tau$ is a natural transformation
    \begin{displaymath}
        \tau: A\Rightarrow B\circ F.
    \end{displaymath}

Let $\mf{X}$ be a category; $(\mf{A},A)$, $(\mf{B},B)$ two $\mf{X}$-structured categories; $(F, \tau): (\mf{A},A)\to (\mf{B},B)$ a morphism. We say that $(F, \tau)$ is an \emph{immersion} if $F$ is \emph{injective} on both the set of objects and the set of morphisms, and $\tau$ is a natural isomorphism. We say that $(F, \tau)$ is an \emph{isomorphism} if $F$ is \emph{bijective} on both the set of objects and the set of morphisms, and $\tau$ is a natural isomorphism. We say that $(F, \tau)$ is an \emph{automorphism} if $\mf{A} = \mf{B}$, $A = B$, and $(F, \tau)$ is an isomorphism. 

We remark that the definitions of structured categories are \emph{not} taken up to equivalence of categories.

~\\
\textbf{Configuration Spaces:}

Let $X$ be a variety over a field $k$. Define the \emph{2-configuration space} $X_{2}$ of $X$ to be the complement of the diagonal in $X\times_{k} X$. The first (respectively, second) projection $\pr^{1}_{X}$ (respectively, $\pr^{2}_{X}$) from $X\times_{k} X$ to $X$ restricts naturally to $X_{2}$. Denote the restriction of $\pr^{1}_{X}$ (respectively, $\pr^{2}_{X}$) to $X_2$ by $\pr^{2/1}_{X}$ (respectively, $\pr^{1\backslash 2}_{X}$).

\section{Review and Generalization of Previous Results}\label{section_A Review and Generalization of Previous Results}

\begin{defi}\label{defi_CAVCs} \quad

    (i) We say that a profinite group $G$ is of \emph{semi-cuspidally-admissible-virtual-curve-type}, abbreviated as \emph{semi-CAVC-type}, if there exists a pointed virtual curve $\mathcal{C}:(X,Y,f,k,\bar{k},t,s)$ of type $(g,r)$ and a prime number $l$ such that $r\geq 1$; $k$ is $l$-cyclotomically full (cf. [\cite{SZM}, Definition 4.10(ii)]); $Y$ is smooth over $k$; and there exists an isomorphism $\phi:G\simto \Pi_{\mathcal{C}}$. Moreover, we say that $(\mathcal{C},\phi)$ as above is a collection of \emph{semi-virtual-data}, abbreviated as \emph{semi-VD}, of $G$.

    (ii) We say that an inclusion of profinite groups $G_{\Delta}\subset G_{\Pi}$ is of \emph{cuspidally-admissible-virtual-curve-type}, abbreviated as \emph{CAVC-type}, if there exists a pointed virtual curve $\mathcal{C}:(X,Y,f,k,\bar{k},t,s)$ of type $(g,r)$ and a prime number $l$ such that $r\geq 1$; $k$ is $l$-cyclotomically full; $Y$ is smooth over $k$; and there exists an isomorphism $\phi:G_{\Pi}\simto \Pi_{\mathcal{C}}$ such that $\phi(G_{\Delta}) = \Delta_{\mathcal{C}}$. Moreover, we say that $(\mathcal{C},\phi)$ as above is a collection of \emph{virtual-data}, abbreviated as \emph{VD}, of $G_{\Delta}\subset G_{\Pi}$.

    (iii) We say that an inclusion of profinite groups $G_{\Delta}\subset G_{\Pi}$ is of \emph{closed-cuspidally-admissible-virtual-curve-type}, abbreviated as \emph{cls-CAVC-type}, if there exists a pointed virtual curve $\mathcal{C}:(X,Y,f,k,\bar{k},t,s)$ of type $(g,0)$ and a prime number $l$ such that $k$ is $l$-cyclotomically full; $Y$ is smooth over $k$; and there exists an isomorphism $\phi:G_{\Pi}\simto \Pi_{\mathcal{C}}$ such that $\phi(G_{\Delta}) = \Delta_{\mathcal{C}}$. Moreover, we say that $(\mathcal{C},\phi)$ as above is a collection of \emph{closed-virtual-data}, abbreviated as \emph{cls-VD}, of $G_{\Delta}\subset G_{\Pi}$.
\end{defi}

\begin{coro}\label{CAVC_universal_prop}
    Let $G_{\Delta}\subset G_{\Pi}$ be an inclusion of CAVC-type. Let $(\mathcal{C},\phi)$, $(\mathcal{C}',\phi')$ be VDs of $G_{\Delta}\subset G_{\Pi}$. Then the types of $\mathcal{C}$ and $\mathcal{C}'$ coincide. Moreover, $\phi'\circ \phi^{-1}$ induces an isomorphism $\cusp(\mathcal{C}) \simto \cusp(\mathcal{C}')$.
\end{coro}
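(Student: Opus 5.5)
The plan is to reduce the statement to the group-theoretic reconstruction results of [\cite{SZM}] for pointed virtual curves satisfying the hypotheses built into \cref{defi_CAVCs}(ii): $r\geq 1$, $k$ is $l$-cyclotomically full, and $Y$ is smooth over $k$. Put $\psi\coloneqq\phi'\circ\phi^{-1}:\Pi_{\mathcal{C}}\simto\Pi_{\mathcal{C}'}$. This is an isomorphism of profinite groups with $\psi(\Delta_{\mathcal{C}})=\Delta_{\mathcal{C}'}$, because both $\phi$ and $\phi'$ carry $G_{\Delta}$ onto the respective geometric virtual fundamental groups. So $\psi$ is an isomorphism of inclusions $\Delta_{\mathcal{C}}\subset\Pi_{\mathcal{C}}\simto\Delta_{\mathcal{C}'}\subset\Pi_{\mathcal{C}'}$, and both assertions should be read off from invariants of such an inclusion.

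\textbf{Step 1: the type.} By the definition of a pointed virtual curve, $\Delta_{\mathcal{C}}$ is isomorphic to the geometric fundamental group of a hyperbolic curve of type $(g,r)$ over an algebraically closed field of characteristic zero. Since $r\geq 1$, this is a free profinite group of rank $2g+r-1$, so $\psi|_{\Delta_{\mathcal{C}}}$ shows that $2g+r=2g'+r'$. To separate $g$ from $r$, use the outer Galois action. The quotient $\Pi_{\mathcal{C}}/\Delta_{\mathcal{C}}$ acts on $\Delta_{\mathcal{C}}^{\ab}\otimes\mathbb{Z}_l$. The cuspidal part of this module has weight $-2$, and its quotient, $T_l$ of the Jacobian of $Y^{\cl}$, has weight $-1$. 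Because $k$ is $l$-cyclotomically full, the cyclotomic character on an open subgroup is detectable group-theoretically, as in [\cite{SZM}, \S4]. The rank $r-1$ of the weight $-2$ part is then preserved by $\psi$, giving $r=r'$ and hence $g=g'$. Alternatively, if [\cite{SZM}] already has a lemma on group-theoretic reconstruction of the type and of the cuspidal inertia subgroups under exactly these hypotheses, I would simply cite it.

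\textbf{Step 2: cusps.} Next I show that the set of cuspidal inertia subgroups of $\Delta_{\mathcal{C}}$ is characterized group-theoretically by the inclusion $\Delta_{\mathcal{C}}\subset\Pi_{\mathcal{C}}$. The standard route mirrors the proof for hyperbolic curves over $l$-cyclotomically full fields: a subgroup $I\subset\Delta_{\mathcal{C}}$ is a maximal cuspidal inertia subgroup if and only if $I\simeq\hat{\mathbb{Z}}$ and, for every characteristic open subgroup $H\subseteq\Delta_{\mathcal{C}}$, the image of $I\cap H$ in the weight $-2$ part of $H^{\ab}\otimes\mathbb{Z}_l$ is nontrivial, with the appropriate maximality. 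Weights are computed using an open subgroup of $\Pi_{\mathcal{C}}$ acting by conjugation. The smoothness of $Y$ ensures that the base contributes no extra weight phenomena, and the section $s$ in a pointed virtual curve ensures that the extension is controlled enough for this argument to go through. This characterization is invariant under $\psi$, so $\psi$ maps cuspidal inertia subgroups to cuspidal inertia subgroups. Since $\psi$ also maps $\Delta_{\mathcal{C}}$ onto $\Delta_{\mathcal{C}'}$, it respects $\Delta$-conjugacy and induces a bijection $\cusp(\mathcal{C})\simto\cusp(\mathcal{C}')$. The inverse map comes from $\psi^{-1}$.

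\textbf{Main obstacle.} The hardest step is making the weight argument of Step 2 rigorous in the virtual setting. Here $\Pi_{\mathcal{C}}/\Delta_{\mathcal{C}}$ is not literally $G_k$ but a group built from $s$ and $f$. The task is to verify that an open subgroup of it acts on $\Delta_{\mathcal{C}}^{\ab}$ through a Galois group of an $l$-cyclotomically full field, compatibly with weights. I would first look for the corresponding statement in [\cite{SZM}, \S4] (around Definition 4.10 and 4.19) and invoke it directly; failing that, I would pass to fibers over closed points of the base to reduce to the classical case.
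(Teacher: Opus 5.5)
Your plan matches the paper's proof. You transport everything along $\psi=\phi'\circ\phi^{-1}$, which is an isomorphism of inclusions $(\Delta_{\mathcal{C}}\subset\Pi_{\mathcal{C}})\simto(\Delta_{\mathcal{C}'}\subset\Pi_{\mathcal{C}'})$, and then use the fact that the type and $\cusp(\mathcal{C})$ are reconstructed group-theoretically from such an inclusion, which the paper cites directly as [\cite{SZM}, Proposition 4.18(i); \cite{SZM}, Theorem 4.20(ii)]. So your weight-theoretic re-derivation is not needed. Note also that $\Pi_{\mathcal{C}}/\Delta_{\mathcal{C}}$ is in fact $G_k$, so the ``main obstacle'' you describe does not arise in the form you state.
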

\begin{proof}
    By [\cite{SZM}, Proposition 4.18(i); \cite{SZM}, Theorem 4.20(ii)], one may group-theoretically reconstruct the type of $\mathcal{C}$ (respectively, $\mathcal{C}'$) as well as $\cusp(\mathcal{C})$ (respectively, $\cusp(\mathcal{C}')$) from the inclusion $\Delta_{\mathcal{C}}\injto \Pi_{\mathcal{C}}$. Therefore, \cref{CAVC_universal_prop} follows immediately.
\end{proof}

\begin{defi}\label{defi_cusp_33}
    Let $G_{\Delta}\subset G_{\Pi}$ be an inclusion of CAVC-type.
    Then we say that $G_{\Delta}\subset G_{\Pi}$ is of \emph{type} $(g,r)$ if there exists a VD $(\mathcal{C},\phi)$ such that $\mathcal{C}$ is of type $(g,r)$. Moreover, we define $\cusp(G_{\Delta}\subset G_{\Pi})$ to be the set of $G_{\Delta}$-conjugacy classes of closed subgroups of $G_{\Delta}$ that corresponds to $\cusp(\mathcal{C})$ under $\phi$ for some VD $(\mathcal{C},\phi)$. 
    Note that by \cref{CAVC_universal_prop}, these definitions do not depend on the choices of the VD.
    For simplicity, if there is no danger of confusion with respect to $G_{\Delta}$, we define the \emph{type} of $G_{\Pi}$ (respectively, write $\cusp(G_{\Pi})$) to be the type of $G_{\Delta}\injto G_{\Pi}$ (respectively, for $\cusp(G_{\Delta}\injto G_{\Pi})$).
\end{defi}

\begin{prop}\label{cls_diff_uncls}
    Let $G_{\Delta} \subset G_{\Pi}$ be an inclusion of cls-CAVC-type. Then the following assertions hold:

    (i) Let $(\mathcal{C},\phi)$, $(\mathcal{C}',\phi')$ be cls-VDs of $G_{\Delta}\subset G_{\Pi}$. Then the types of $\mathcal{C}$ and $\mathcal{C}'$ coincide.
    
    (ii) Let $G_{\Delta}' \subset G_{\Pi}'$ be an inclusion of CAVC-type. Then there does not exist an isomorphism $\phi: G_{\Pi} \simto G_{\Pi}'$ such that $\phi(G_{\Delta}) = G_{\Delta}'$.
\end{prop}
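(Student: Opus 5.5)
For (i), I plan to recover the genus from the inclusion $G_{\Delta}\subset G_{\Pi}$ by an abelianization invariant, in the spirit of \cref{CAVC_universal_prop}. Fix a cls-VD $(\mathcal{C},\phi)$ of type $(g,0)$. Under $\phi$, $G_{\Delta}\simto\Delta_{\mathcal{C}}$. For a pointed virtual curve, $\Delta_{\mathcal{C}}$ is (up to the identifications of [\cite{SZM}, Definition 3.5]) the geometric fundamental group of the fibre curve, hence a profinite surface group. In the closed case this is the profinite completion of $\pi_1$ of a closed orientable surface of genus $g$, so $G_{\Delta}^{\abt}\cong\widehat{\Z}^{2g}$. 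Consequently
\[
2g=\mathrm{rank}_{\widehat{\Z}}\,G_{\Delta}^{\abt},
\]
and the right-hand side depends only on the abstract profinite group $G_{\Delta}$, not on the cls-VD. Running the same computation for $(\mathcal{C}',\phi')$ gives the same $g$. Since both types have $r=0$, the types coincide. Alternatively, I would cite the reconstruction of the type in [\cite{SZM}, Proposition 4.18(i)], if its hypotheses cover $r=0$.

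For (ii), I argue by contradiction. Suppose $\phi:G_{\Pi}\simto G_{\Pi}'$ with $\phi(G_{\Delta})=G_{\Delta}'$, let $(\mathcal{C}',\phi')$ be a VD of $G'_\Delta\subset G'_\Pi$ of type $(g',r')$ with $r'\geq 1$, and let $(\mathcal{C},\psi)$ be a cls-VD of type $(g,0)$. Then $\Delta_{\mathcal{C}}\cong\Delta_{\mathcal{C}'}$ as abstract profinite groups. The first invariant I compare is freeness. $\Delta_{\mathcal{C}'}$ is the geometric fundamental group of a curve of type $(g',r')$ with $r'\geq1$, so it is free profinite of rank $2g'+r'-1$. $\Delta_{\mathcal{C}}$ is a closed surface group of genus $g$.

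If $g\geq1$ (which covers all hyperbolic closed cases, and even $g=1$), $\Delta_{\mathcal{C}}$ is not free profinite, and I would give two ways to see this.
\begin{itemize}
\item Cohomological dimension: for any prime $l$, $H^2(\Delta_{\mathcal{C}},\Z/l)\cong\Z/l\neq0$, since it is an orientable Poincaré duality group of dimension $2$. A free profinite group has $\mathrm{cd}_l\leq1$.
\item Counting: the free group would need $2g'+r'-1=2g$ generators to match abelianizations. A free profinite group of rank $2g$ has $H^2=0$, which again contradicts $H^2\neq 0$.
\end{itemize}
If $g=0$, then $\Delta_{\mathcal{C}}$ is trivial. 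But $\Delta_{\mathcal{C}'}$ is free of rank $2g'+r'-1$, which is $\geq1$ unless $(g',r')=(0,1)$. I would rule out that degenerate case and the $g=0$ case by noting that virtual curves in [\cite{SZM}] are hyperbolic (type with $2g+r\geq3$), so $g\geq2$ for the closed case.

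The main obstacle is making precise that $\Delta_{\mathcal{C}}$ for a pointed virtual curve is indeed isomorphic to the profinite surface group of the fibre, in particular a Poincaré duality group, as opposed to something modified by the virtual structure (the section $s$ and base $Y$). I would extract this from [\cite{SZM}, Definition 3.5] and the homotopy exact sequence for the family $f$, which identifies $\Delta_{\mathcal{C}}$ with the kernel of $\Pi_X\to\Pi_Y$, that is, the geometric fundamental group of a geometric fibre. The cohomological computation then follows from standard facts on profinite surface groups.
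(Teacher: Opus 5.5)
Your proposal is correct and follows essentially the same route as the paper. For (i) the paper recovers $g$ from the rank of the abelianization of $\Delta_{\mathcal{C}}$, and for (ii) it separates $G_{\Delta}$ from $G_{\Delta}'$ by cohomological dimension ($2$ for a closed surface group versus $\leq 1$ for a free profinite group), exactly as you do; the identification of $\Delta_{\mathcal{C}}$ with the geometric fibre's fundamental group that you flag as the main obstacle is what [\cite{SZM}, Proposition 3.7] supplies.
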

\begin{proof}
    Assertion (i) follows immediately from the fact that the genus of a pointed virtual curve (under the assumption that the cardinality of the set of the cusps is zero) can be group-theoretically reconstructed from the rank of the abelianization of its geometric virtual fundamental group (cf. [\cite{SZM}, Proposition 3.7; \cite{MZK1}, Remark 1.2.2]).
    Assertion (ii) follows immediately from the fact that $G_{\Delta}$ and $G_{\Delta}'$ have different cohomological dimensions (cf. [\cite{SZM}, Proposition 3.7; \cite{MZK1}, Remark 1.2.2]).
\end{proof}

\begin{defi}
    Let $G_{\Delta}\subset G_{\Pi}$ be an inclusion of cls-CAVC-type.
    Then we say that $G_{\Delta}\subset G_{\Pi}$ is of \emph{type} $(g,0)$ if there exists a cls-VD $(\mathcal{C},\phi)$ such that $\mathcal{C}$ is of type $(g,0)$.
    Note that by \cref{cls_diff_uncls}(i), this definition does not depend on the choice of the cls-VD. For simplicity, if there is no danger of confusion with respect to $G_{\Delta}$, we define the \emph{type} of $G_{\Pi}$ to be the type of $G_{\Delta}\injto G_{\Pi}$.
\end{defi}

\begin{defi}
    We say that an inclusion of profinite groups $G_{\Delta}\subset G_{\Pi}$ is of \emph{general-cuspidal-admissible-virtual-curve-type}, abbreviated as \emph{gen-CAVC-type}, if it is of either CAVC-type or cls-CAVC-type.
\end{defi}

\begin{remm}\label{remm_37}
    By \cref{cls_diff_uncls}(ii), for any inclusion $G_{\Delta}\subset G_{\Pi}$ of gen-CAVC-type, the notion of the type
    $(g,r)$ of the inclusion $G_{\Delta}\subset G_{\Pi}$ is well-defined. Moreover, by defining the set of conjugacy classes of cuspidal inertia groups to be the empty set in the case where $G_{\Delta} \subset G_{\Pi}$ is of cls-CAVC-type, we obtain a 
    well-defined notion of $\cusp(G_{\Delta} \subset G_{\Pi})$.
\end{remm}

\begin{defi}\label{defi_admissible_type}
    Let $G_{\Delta}\subset G_{\Pi}$ be an inclusion of CAVC-type of type $(g,r)$; $S$ a subset of $\cusp(G_{\Delta}\subset G_{\Pi})$. We say that $S$ is of \emph{admissible-type} if $S$ is stabilized by the natural action of $G_{\Pi}$ by conjugation, and one of the following holds:
    \begin{enumerate}
        \item $g \geq 2$;
        \item $g = 1$ and $\card(S) \leq r - 1$;
        \item $g = 0$ and $\card(S) \leq r - 3$.
    \end{enumerate}
    We say that $S$ is of \emph{CAVC-admissible-type} if $S$ is stabilized by the natural action of $G_{\Pi}$ by conjugation, and one of the following holds:
    \begin{enumerate}
        \item $g \geq 1$ and $\card(S) \leq r - 1$;
        \item $g = 0$ and $\card(S) \leq r-3$.
    \end{enumerate}
\end{defi}

\begin{lemm}\label{field_ext_curve_theoretic}
    Let $l$ be a prime number; $k$ an $l$-cyclotomically full field of characteristic zero; $\mathcal{C}:(X,Y,f,k,\bar{k},t,s)$ a pointed virtual curve of type $(g,r)$ such that $Y$ is smooth over $k$. Let $\Pi$ be an open subgroup of $\Pi_{\mathcal{C}}$ containing $\Delta_{\mathcal{C}}$. Denote by $h:\sp(k')\to \sp(k)$ the morphism induced by the finite field extension $k'/k$ determined by the image of $\Pi$ in $G_k$ via the natural surjection $\Pi_{\mathcal{C}}\surjto G_{k}$; by $s'$ the section $G_{k'}\injto \Pi_{Y\times_{k}k'}$ induced by $s$; by $t'$ the natural morphism $Y\times_{k}k'\to \sp(k')$. Then 
    \begin{displaymath}
    F\coloneqq(X\times_{\sp(k)}h,Y\times_{\sp(k)}h,h):\mathcal{C'}\coloneqq(X\times_{k}k',Y\times_{k}k',f\times_{k}k',k',\bar{k},t',s')\to (X,Y,f,k,\bar{k},t,s)
    \end{displaymath}
    is an \etale covering (cf. [\cite{SZM}, Definition 3.3]), and the natural image of $\Pi_{\mathcal{C}'}$ in $\Pi_{\mathcal{C}}$ is $\Pi$. Moreover, $F$ induces a natural isomorphism $\cusp(\mathcal{C'})\simto \cusp(\mathcal{C})$.
\end{lemm}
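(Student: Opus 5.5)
The plan is to unwind the definitions of [\cite{SZM}, Definition 3.1; Definition 3.3; Definition 3.5] and reduce everything to standard facts about base change of \etale fundamental groups along the finite separable extension $k'/k$. First I check that $\mathcal{C}'$ is a pointed virtual curve of the same type $(g,r)$. The base change $Y\times_{k}k'$ is smooth over $k'$, and $X\times_{k}k'\to Y\times_{k}k'$ is again a family of hyperbolic curves of type $(g,r)$. I then need $s'$ to be a section of $\Pi_{Y\times_{k}k'}\surjto G_{k'}$. For this I use the cartesian diagram of fundamental groups $\Pi_{Y\times_{k}k'}\simto \Pi_{Y}\times_{G_{k}}G_{k'}$, which holds because $Y$ is geometrically connected over $k$. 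Under this identification, $s'$ is the restriction $s|_{G_{k'}}$, and this makes sense since $G_{k'}\subseteq G_{k}$ is open.

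Next I verify that $F$ is an \etale covering in the sense of [\cite{SZM}, Definition 3.3]. Both $X\times_{k}k'\to X$ and $Y\times_{k}k'\to Y$ are finite \etale, since they are base changes of the finite \etale morphism $h$. The compatibility with $f$, $t$, $t'$ and with the sections $s$, $s'$ holds by construction.

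For the image statement, I use that $\Pi_{\mathcal{C}}$ is the fibre product $\Pi_{X}\times_{\Pi_{Y},s}G_{k}$ (or the analogous construction in \cite{SZM}). By the same base-change identification, $\Pi_{X\times_{k}k'}\simto \Pi_{X}\times_{G_{k}}G_{k'}$. Combining these gives
\begin{displaymath}
\Pi_{\mathcal{C}'}\simto \Pi_{\mathcal{C}}\times_{G_{k}}G_{k'},
\end{displaymath}
so $\Pi_{\mathcal{C}'}\to\Pi_{\mathcal{C}}$ is injective with image the inverse image of $G_{k'}$ under $\Pi_{\mathcal{C}}\surjto G_{k}$. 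Since $\Pi\supseteq\Delta_{\mathcal{C}}$ and $G_{k'}$ is by definition the image of $\Pi$, this inverse image is exactly $\Pi$. In particular, $\Delta_{\mathcal{C}'}\simto\Delta_{\mathcal{C}}$.

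Finally, for the cusps, $F$ induces an identification $\Delta_{\mathcal{C}'}=\Delta_{\mathcal{C}}$. The cuspidal inertia groups on both sides are the inertia groups of the geometric fibre family over $\bar{k}$, and this fibre is literally unchanged under $k\subseteq k'\subseteq\bar{k}$. Hence the set of geometric cusps $\tilde{D}$ is the same, and the inertia subgroups correspond. Taking $\Delta$-conjugacy classes then gives the bijection $\cusp(\mathcal{C}')\simto\cusp(\mathcal{C})$. I expect this last step to be the main obstacle, because one must make sure the definition of $\cusp(\mathcal{C})$ in \cite{SZM}, which uses decompositions in the virtual fundamental group, is stable under the base change. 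I would handle it by appealing to the group-theoretic characterization [\cite{SZM}, Theorem 4.20(ii)] of $\cusp$ in terms of $\Delta_{\mathcal{C}}$, which is preserved by the isomorphism $\Delta_{\mathcal{C}'}\simto\Delta_{\mathcal{C}}$.
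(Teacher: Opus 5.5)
Your proposal is correct and follows essentially the same route as the paper. The paper simply asserts that the \'etale covering property and the equality of the image with $\Pi$ ``follow from the constructions involved'', which your computation $\Pi_{\mathcal{C}'}\simto\Pi_{\mathcal{C}}\times_{G_{k}}G_{k'}$ makes explicit, and it deduces the bijection $\cusp(\mathcal{C}')\simto\cusp(\mathcal{C})$ from [\cite{SZM}, Theorem 4.20(ii)], exactly as in your final step. The one point you leave implicit is that $k'$, being finite over $k$, is again $l$-cyclotomically full; this is what lets the same group-theoretic description of cusps apply to $\mathcal{C}'$.
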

\begin{proof}
    It follows immediately from the constructions involved that $F$ is an \etale covering, and 
    that thel image of $\Pi_{\mathcal{C}'}$ in $\Pi_{\mathcal{C}}$ is $\Pi$. It follows immediately from [\cite{SZM}, Theorem 4.20(ii)] that $F$ induces a natural isomorphism $\cusp(\mathcal{C'})\simto \cusp(\mathcal{C})$.
\end{proof}

\begin{defi}\label{defi_cusp_compatibility_subgroup}
    Let $G$, $H$ be topological groups; $S$ a set of conjugacy classes of commensurably terminal (cf. [\cite{MZK3}, \S \, 0, the discussion entitled ``Topological Groups'']) subgroups of $G$; $f:H\injto G$ an open injective homomorphism of finite index. Define
    \begin{displaymath}
    T \coloneqq \{ \beta \cap H\ |\ \beta \in \alpha \in S \},
    \end{displaymath}
    which is a collection of subgroups of $H$. By construction, $T$ is stable under the natural conjugation action of $H$, if one regards $T$ as a subset of the set of subgroups of $H$. Consequently, $T$ determines a set of conjugacy classes of subgroups of $H$, which we denote by $S'$. 
    By considering the commensurators, we obtain a natural surjection $h:S'\surjto S$. We refer to $S'$ as the \emph{set of conjugacy classes} of $H$ induced by $S$ via $f$, and to $h$ as the \emph{induced morphism of sets of conjugacy classes}.
\end{defi}

\begin{lemm}\label{covering_curve_theoretic}
Let $l$ be a prime number; $k$ an $l$-cyclotomically full field of characteristic zero; $\mathcal{C}:(X,Y,f,k,\bar{k},t,s)$ a pointed virtual curve of type $(g,r)$ such that $Y$ is smooth over $k$. Let $\Pi$ be an open subgroup of $\Pi_{\mathcal{C}}$. Then there exists an \etale covering $\mathcal{C}' \to \mathcal{C}$ of pointed virtual curves such that the natural image of $\Pi_{\mathcal{C}'}$ in $\Pi_{\mathcal{C}}$ is $\Pi$.
\end{lemm}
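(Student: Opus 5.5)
The plan is to reduce to two steps: first a base extension, handled by \cref{field_ext_curve_theoretic}, and then a geometric finite \etale covering of $X$ over the enlarged base field.

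Write $\Pi_{\mathcal{C}}\surjto G_{k}$ for the natural surjection, and let $\Pi_{1}\coloneqq \Pi\cdot\Delta_{\mathcal{C}}$. This is an open subgroup of $\Pi_{\mathcal{C}}$ containing $\Delta_{\mathcal{C}}$, and it corresponds to a finite extension $k'/k$. By \cref{field_ext_curve_theoretic} there is an \etale covering $\mathcal{C}_{1}\coloneqq(X\times_{k}k',Y\times_{k}k',\dots,k',\bar{k},t',s')\to\mathcal{C}$ whose virtual fundamental group maps isomorphically onto $\Pi_{1}$. We should record two facts about $\mathcal{C}_{1}$:
\begin{itemize}
\item $k'$ is again $l$-cyclotomically full, since this property is inherited by finite extensions by [\cite{SZM}, Definition 4.10(ii)];
\item $Y\times_{k}k'$ is smooth over $k'$.
\end{itemize}
So the hypotheses are preserved, and we may replace $\mathcal{C}$ by $\mathcal{C}_{1}$ and $\Pi_{\mathcal{C}}$ by $\Pi_{1}$. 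After this replacement, $\Pi$ surjects onto $G_{k}$; equivalently, $\Pi\cdot\Delta_{\mathcal{C}}=\Pi_{\mathcal{C}}$.

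Next, recall the structure of $\Pi_{\mathcal{C}}$ from [\cite{SZM}, Definition 3.5]. It is obtained from $\Pi_{X}$ (more precisely, from the fibre-type structure over $Y$) by pulling back along the section $s:G_{k}\to\Pi_{Y}$. Thus $\Pi_{\mathcal{C}}\cong\Pi_{X}\times_{\Pi_{Y}}G_{k}$, up to the appropriate quotient, and $\Delta_{\mathcal{C}}$ is the geometric fibre group. Let $\tilde{\Pi}\subseteq\Pi_{X}$ be the open subgroup obtained as the preimage of $\Pi$ under the natural map $\Pi_{X}\times_{\Pi_{Y}}s(G_{k})\to\Pi_{\mathcal{C}}$, enlarged by the kernel of $\Pi_{X}\to\Pi_{Y}$ away from $s(G_{k})$. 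More concretely, I would take the open subgroup
\[
\tilde{\Pi}\coloneqq \{\gamma\in\Pi_{X}\ |\ \gamma \equiv \text{an element of }\Pi \text{ modulo the fibre data}\}.
\]
Let $X'\to X$ be the corresponding finite \etale covering. By [\cite{Hos3}, Proposition 2.3], $X'\to X$ induces a family $X'\to Y'$ of hyperbolic curves together with an \etale covering $Y'\to Y$. Because $\tilde{\Pi}$ contains the image of the relevant $\Delta$-cosets and $\Pi$ surjects onto $G_{k}$, the section $s$ lifts to a section $s':G_{k}\to\Pi_{Y'}$; explicitly, its image lies in the image of $\tilde{\Pi}$ in $\Pi_{Y}$. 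Also, $Y'$ is geometrically connected over $k$ and smooth. This gives a pointed virtual curve $\mathcal{C}'\coloneqq(X',Y',\dots,k,\bar{k},t',s')$ with an \etale covering $\mathcal{C}'\to\mathcal{C}$ in the sense of [\cite{SZM}, Definition 3.3].

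Finally, we check that the image of $\Pi_{\mathcal{C}'}$ is exactly $\Pi$, using the fibre-product description and the fact that $\Pi$ surjects onto $G_{k}$. Composing the two coverings, and using that compositions of \etale coverings of pointed virtual curves are \etale coverings, yields the statement.

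The main obstacle is the middle step. The virtual fundamental group is not literally a subgroup of $\Pi_{X}$: it arises after pulling back along $s$, and possibly after quotienting by a kernel. So one must choose $\tilde{\Pi}\subseteq\Pi_{X}$ carefully, so that the base $Y'$ it induces admits a lift of $s$ and the resulting virtual group is exactly $\Pi$ rather than something larger. What I would try first is to take $\tilde{\Pi}$ to be the full preimage of $\Pi$ under the natural surjection from the preimage of $s(G_{k})$ in $\Pi_{X}$, together with the kernel of $\Pi_{X}\to\Pi_{Y}$. I would then verify openness, and verify that $Y'\to Y$ is the covering cut out by the image of $\tilde{\Pi}$ in $\Pi_{Y}$, which contains $s(G_{k})$.
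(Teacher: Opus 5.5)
There is a genuine gap, and it is exactly the step you flag as the main obstacle: you never produce an open subgroup $\tilde\Pi\subseteq\Pi_X$ whose associated virtual fundamental group is $\Pi$. The displayed set (``$\gamma\equiv$ an element of $\Pi$ modulo the fibre data'') is not a definition. The concrete recipe in your last paragraph also fails. First, $\Pi_{\mathcal{C}}$ is literally a closed subgroup of $\Pi_X$, namely $f_*^{-1}(s(G_k))$; no quotient is involved, and $\Delta_{\mathcal{C}}=\ker f_*$. So ``the preimage of $\Pi$'' in it is just $\Pi$, and adjoining $\ker(\Pi_X\to\Pi_Y)=\Delta_{\mathcal{C}}$ gives $\Pi\cdot\Delta_{\mathcal{C}}$. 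After your base-change step this is all of $\Pi_{\mathcal{C}}$. That subgroup has index $[\Pi_Y:s(G_k)]$ in $\Pi_X$, which is infinite as soon as $\Delta_Y$ is (e.g.\ for $[\pr^{2/1}_{X},s]$, where the base is $X$ itself). So it determines no finite \'{e}tale cover of $X$. Even if it did, the resulting virtual group would contain $\Delta_{\mathcal{C}}$, so its image could not equal $\Pi$ unless $\Delta_{\mathcal{C}}\subseteq\Pi$. You are enlarging $\Pi$ in the fibre direction, which is precisely the direction in which it must not be enlarged.

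The missing idea is to enlarge $\Pi$ transversally, by an open subgroup of $\Pi_X$. Since $\Pi$ is open in $\Pi_{\mathcal{C}}$ for the topology induced from $\Pi_X$, there is an open normal subgroup $U\subseteq\Pi_X$ with $U\cap\Pi_{\mathcal{C}}\subseteq\Pi$. Set $\tilde\Pi\coloneqq\Pi\cdot U$. This is an open subgroup of $\Pi_X$, and by the modular law $\tilde\Pi\cap\Pi_{\mathcal{C}}=\Pi\cdot(U\cap\Pi_{\mathcal{C}})=\Pi$.

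With this choice the rest of your outline goes through. After your reduction, $\Pi\surjto G_k$, so $f_*(\Pi)=s(G_k)\subseteq f_*(\tilde\Pi)=\Pi_{Y'}$ (where $Y'\to Y$ comes from [\cite{Hos3}, Proposition 2.3]). Hence $s$ lifts to $Y'$, and $Y'$ is geometrically connected over $k$ because $\tilde\Pi\surjto G_k$. The virtual group of the new pointed virtual curve is then $\{\gamma\in\tilde\Pi : f_*(\gamma)\in s(G_k)\}=\tilde\Pi\cap\Pi_{\mathcal{C}}=\Pi$. If you choose $U$ inside the preimage of $G_{k'}$, you can even skip the separate appeal to \cref{field_ext_curve_theoretic}. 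The paper does not carry this out; it simply cites [\cite{SZM}, Corollary 3.4(ii)], which packages this construction.
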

\begin{proof}
    The assertion follows immediately from [\cite{SZM}, Corollary 3.4(ii)].
\end{proof}

\begin{coro}\label{covering_group_theoretic}
    Let $G_{\Delta}\subset G_{\Pi}$, $G_{\Delta}'\subset G_{\Pi}'$ be two inclusions of profinite groups; $f:G_{\Pi}'\to G_{\Pi}$ a morphism of inclusions such that $f$ is an open injective homomorphism of finite index. Assume that $G_{\Delta}\subset G_{\Pi}$ is of CAVC-type (respectively, cls-CAVC-type). Then $G_{\Delta}'\subset G_{\Pi}'$ is of CAVC-type (respectively, cls-CAVC-type). Moreover, $\Cusp(G_{\Pi}')$ is the set of conjugacy classes of $G_{\Delta}'$ of subgroups induced by $\Cusp(G_{\Pi})$ via the restriction $f|_{G_{\Delta}'}$.
\end{coro}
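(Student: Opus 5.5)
The plan is to transport the situation to a pointed virtual curve via a VD (respectively, cls-VD) and realize $f(G_{\Pi}')$ geometrically as the fundamental group of an \etale covering. Let $(\mathcal{C},\phi)$ be a VD (respectively, cls-VD) of $G_{\Delta}\subset G_{\Pi}$, where $\mathcal{C}:(X,Y,f_{\mathcal{C}},k,\bar{k},t,s)$ is of type $(g,r)$, $k$ is $l$-cyclotomically full and $Y$ is smooth over $k$. Put $\Pi\coloneqq \phi(f(G_{\Pi}'))$, an open subgroup of $\Pi_{\mathcal{C}}$. By \cref{covering_curve_theoretic}, there is an \etale covering $F:\mathcal{C}'\to\mathcal{C}$ of pointed virtual curves with image of $\Pi_{\mathcal{C}'}$ equal to $\Pi$.

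\textbf{Step 1: $\mathcal{C}'$ satisfies the hypotheses of \cref{defi_CAVCs}.} By the structure of \etale coverings in [\cite{SZM}, Definition 3.3; Corollary 3.4], $\mathcal{C}'$ has base field $k'$, the finite extension of $k$ cut out by the image of $\Pi$ in $G_k$. Since $l$-cyclotomic fullness passes to finite extensions (the image of the $l$-adic cyclotomic character stays open), $k'$ is $l$-cyclotomically full. Smoothness of $Y'$ over $k'$ holds because $Y'\to Y\times_k k'$ is \etale. The cusp count satisfies $r'\geq 1$ iff $r\geq 1$, and $r'=0$ iff $r=0$, since \etale coverings of curves map cusps surjectively onto cusps.

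\textbf{Step 2: the resulting isomorphism.} Using $F_\ast:\Pi_{\mathcal{C}'}\simto \Pi$, I will define $\phi'\coloneqq F_\ast^{-1}\circ\phi\circ f:G_{\Pi}'\simto\Pi_{\mathcal{C}'}$. What remains is $\phi'(G_{\Delta}')=\Delta_{\mathcal{C}'}$. Because $f$ is a morphism of inclusions, $f(G_{\Delta}')=f(G_{\Pi}')\cap G_{\Delta}$, so $\phi(f(G_{\Delta}'))=\Pi\cap\Delta_{\mathcal{C}}$. The key geometric input is $F_\ast^{-1}(\Delta_{\mathcal{C}})=\Delta_{\mathcal{C}'}$. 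This holds since $\Delta$ is the kernel of the projection to the absolute Galois group of the base field, and $G_{k'}\to G_k$ is injective, compatible with $F$. Thus $(\mathcal{C}',\phi')$ is a VD (respectively, cls-VD), and $G_{\Delta}'\subset G_{\Pi}'$ is of CAVC-type (respectively, cls-CAVC-type).

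\textbf{Step 3: cusps.} By \cref{CAVC_universal_prop}, $\cusp(G_{\Pi}')$ may be computed via the VD $(\mathcal{C}',\phi')$. Cuspidal inertia subgroups of $\Delta_{\mathcal{C}'}$ are exactly the intersections with $\Delta_{\mathcal{C}'}$ of cuspidal inertia subgroups of $\Delta_{\mathcal{C}}$ (via $F_\ast$). This is standard for finite \etale coverings of hyperbolic curves over $\bar{k}$: the inertia of a cusp upstairs is the open subgroup $I\cap\Delta'$ of the inertia $I$ downstairs. For the field-extension part one may also invoke \cref{field_ext_curve_theoretic} and [\cite{SZM}, Theorem 4.20(ii)], factoring $F$ as a base change followed by a geometric covering. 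Pulling back through $\phi'$ gives precisely the set of \cref{defi_cusp_compatibility_subgroup} for $f|_{G_{\Delta}'}$; cuspidal inertia groups are commensurably terminal, so that definition applies.

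The main obstacle is Step 2's identification $F_\ast^{-1}(\Delta_{\mathcal{C}})=\Delta_{\mathcal{C}'}$ together with the precise form of \etale coverings of virtual curves. If the cited definition allows the covering to be arbitrary, I would first factor through \cref{field_ext_curve_theoretic} to reduce to the case where $\Pi\supseteq\Delta_{\mathcal{C}}$ fails only geometrically.
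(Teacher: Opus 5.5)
Your Steps 1 and 2 correctly unpack what the paper compresses into ``follows immediately from \cref{covering_curve_theoretic}''. The architecture of Step 3 also matches the paper: compute $\cusp(G_{\Pi}')$ through the VD $(\mathcal{C}',\phi')$ via \cref{CAVC_universal_prop}, then compare with $\cusp(\mathcal{C})$ along the covering. The gap is the sentence ``this is standard for finite \etale coverings of hyperbolic curves over $\bar{k}$''.

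The sets $\cusp(\mathcal{C})$ and $\cusp(\mathcal{C}')$ are cusp sets of \emph{pointed virtual curves} in the sense of [\cite{SZM}, Definition 4.19]. They are the classes that \cref{CAVC_universal_prop} reconstructs group-theoretically from $\Delta_{\mathcal{C}}\subset\Pi_{\mathcal{C}}$. That they coincide with the inertia classes of honest cusps (of a geometric fiber, or of the family $X\to Y$) is not a matter of definition but a theorem. This is why the paper proves \cref{virtual_inertia_and_real_inertia} separately, and why \cref{Galois_actions_on_D} and \cref{normalizer} pass through [\cite{SZM}, Theorem 4.20(iii)]. The classical fact about inertia groups in finite \etale coverings of curves says nothing about $\cusp(\mathcal{C}')$ until this identification is made at both ends of $F$.

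The paper builds exactly this bridge. It uses \cref{field_ext_curve_theoretic} and [\cite{SZM}, Theorem 4.20(ii)] to reduce to the case where the section comes from a rational point. Then [\cite{SZM}, Theorem 4.20(iii)] identifies the virtual fundamental group and its cusp set with the \etale fundamental group of an honest hyperbolic curve and its cuspidal inertia classes. Only after that is the well-known structure of such groups invoked. You use the same two results for a different purpose (splitting off the base-field change in $F$), which does not address the identification.

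To repair Step 3 you have two options:
\begin{itemize}
\item Carry out the paper's reduction. It is compatible with passing to $\mathcal{C}'$: if the section of $\mathcal{C}$ arises from a $k$-rational point of $Y$, the image of the induced section of $\mathcal{C}'$ is the decomposition group of a $k'$-rational point of $Y'$ lying over it. So Theorem 4.20(iii) applies to $\mathcal{C}'$ as well.
\item Apply \cref{virtual_inertia_and_real_inertia} to both $\mathcal{C}$ and $\mathcal{C}'$. Its proof does not use the present corollary, and your Step 1 supplies the cyclotomic fullness of $k'$.
\end{itemize}
Once $\cusp(\mathcal{C})=\cusp(f_{\mathcal{C}},\Pi_X)$ and likewise for $\mathcal{C}'$, your description of the cuspidal inertia groups upstairs as the intersections $gIg^{-1}\cap\Delta_{\mathcal{C}'}$ follows directly from the definitions of decomposition and inertia groups of families, since $\Pi_{Y'}\to\Pi_Y$ is injective.
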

\begin{proof}
    It follows immediately from \cref{covering_curve_theoretic} that $G_{\Delta}'\subset G_{\Pi}'$ is of CAVC-type (respectively, cls-CAVC-type). By \cref{field_ext_curve_theoretic} and [\cite{SZM}, Theorem 4.20(ii)], one may assume without loss of generality that $G_{\Delta}\subset G_{\Pi}$ arises from a pointed virtual curve whose section arises from a rational point. The remaining assertion then follows from [\cite{SZM}, Theorem 4.20(iii)] and the well-known structure of the \etale fundamental groups of hyperbolic curves.
\end{proof}

\begin{lemm}\label{Galois_actions_on_D}
    Let $l$ be a prime number; $k$ an $l$-cyclotomically full field of characteristic zero; $\mathcal{C} : (X,Y,f,k,\bar{k},t,s)$ a pointed virtual curve of type $(g,r)$ such that $Y$ is smooth over $k$ and $r\geq 1$; $S \subset \cusp(\mathcal{C})$ a subset of admissible-type that is stabilized by the natural conjugation action of $\Pi_{X}$.
    Then there exists a decuspidalization $\mathcal{C} \to \mathcal{C}'$ of pointed virtual curves 
    such that $S$ is the subset determined by the decuspidalization 
    (cf. \cref{Notations and Terminologies}, the discussion entitled ``Virtual Curves'').
\end{lemm}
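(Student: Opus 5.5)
The plan is to reduce the statement to [\cite{SZM}, Proposition 5.3], which constructs a decuspidalization of pointed virtual curves from a suitable $\Pi_{X}$-stable set of cusps of the fibration $f:X\to Y$. The only thing to add is a check that the hypothesis ``admissible-type'' (\cref{defi_admissible_type}) is exactly what keeps the resulting family hyperbolic.

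First I would interpret $S$ geometrically. By [\cite{SZM}, Theorem 4.20(ii)], $\cusp(\mathcal{C})$ is in natural bijection with the set $\cusp(f,\Pi_{X})$ of conjugacy classes of cuspidal inertia groups of the family $f$. Equivalently, it is the set of geometric cusps of the geometric fiber of $f$ over the image of $t$, i.e.\ the irreducible components of $D\times_{Y}\bar{Y}$ over a geometric point, where $D$ is the cuspidal divisor in the compactification $Z\to Y$ of $f$ (cf.\ \cref{Notations and Terminologies}, ``Virtual Curves'' and ``Fundamental Groups''). $S$ is stable under conjugation by $\Pi_{X}$, and the action of $\Pi_{X}$ on the cusps factors through $\Pi_{Y}$ via the monodromy of $D\to Y$. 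Hence $S$ corresponds to a union $D_{S}\subseteq D$ of connected components of the finite \etale $Y$-scheme $D$. Here I use that $Y$ is normal and connected, so that $D\to Y$ is finite \etale and its connected components correspond to $\Pi_{Y}$-orbits.

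Next I would set $X'\coloneqq X\cup D_{S}\subseteq Z$ (the complement of $D\setminus D_{S}$) with the induced map $f':X'\to Y$. Fiberwise this curve has type $(g,r-\card(S))$. The admissible-type condition ensures $2g+(r-\card(S))\geq 3$:
\begin{itemize}
\item if $g\geq 2$ this holds automatically;
\item if $g=1$ it follows from $\card(S)\leq r-1$;
\item if $g=0$ it follows from $\card(S)\leq r-3$.
\end{itemize}
So $f'$ is a family of hyperbolic curves over the smooth base $Y$, and $X\injto X'$ is an open immersion over $Y$. The remaining data $k,\bar{k},t$ are kept unchanged. For the section I would take $s'$ to be the composite of $s$ with the surjection $\Pi_{X}\surjto\Pi_{X'}$ (surjective since $X\subseteq X'$ is dense open in a normal scheme). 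Then $\mathcal{C}'\coloneqq(X',Y,f',k,\bar{k},t,s')$ is a pointed virtual curve, and the open immersion together with the identities on $Y$ and $k$ is a decuspidalization in the sense of [\cite{SZM}, Proposition 5.3]. In the case $g=0$ this is just the argument already given there.

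Finally I would verify that the subset of $\cusp(\mathcal{C})$ determined by this decuspidalization (the generalization of [\cite{SZM}, Definition 5.4(ii)]) is exactly $S$. The kernel of $\Pi_{X}\surjto\Pi_{X'}$ is normally generated by the inertia groups at the components of $D_{S}$, by purity / Zariski--Nagata for the normal crossings divisor $D_{S}$ in the smooth scheme $X'$. These inertia groups are precisely the members of the classes in $S$.

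I expect the main obstacle to be this last compatibility. One must match the family-theoretic inertia groups of $D_{S}$ with the classes in $\cusp(\mathcal{C})$, which are defined via the virtual fundamental group $\Pi_{\mathcal{C}}$ and the section $s$. I would handle it by invoking [\cite{SZM}, Theorem 4.20(ii)(iii)]. After passing to a finite extension via \cref{field_ext_curve_theoretic} if needed, one may also reduce to the case where the fiber over a rational point is an honest hyperbolic curve, where the description of cuspidal inertia is classical.
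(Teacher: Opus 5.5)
This is essentially the paper's argument. In both, $S$ is matched with a union of horizontal components of the cuspidal divisor of the compactification $Z\to Y$, and $X'$ is obtained by filling in exactly those components while keeping $Y,k,\bar{k},t$.

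The only difference in route is how those components are located. The paper passes to the generic fiber: it uses [\cite{SZM}, Proposition 3.7] to identify $\Delta_{\mathcal{C}}$ with $\Pi_{X_{\bar{\xi}}}$ and $\cusp(\mathcal{C})$ with $\cusp(X_{\xi})$. A $\Pi_{X}$-stable $S$ is then a $G_{K(Y)}$-stable set of geometric cusps, i.e.\ a set of closed points of $Z_{\xi}\setminus X_{\xi}$, and their closures are the required components. You instead use $\cusp(f,\Pi_{X})=\cusp(\mathcal{C})$ together with the monodromy of the finite \etale cuspidal divisor over $Y$, which amounts to the same thing. You also spell out the hyperbolicity count and the purity check that the paper leaves to ``by construction''.

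Your choice $X'=Z\setminus(D\setminus D_{S})$ is the right orientation. The paper's text literally removes the components attached to $S$; it has to be read your way for the admissible-type bound to give hyperbolicity and for the determined subset to be $S$.

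One slip needs correcting. In a pointed virtual curve $(X,Y,f,k,\bar{k},t,s)$, the datum $s$ is a section of $\Pi_{Y}\surjto G_{k}$ for the \emph{base}, not a section into $\Pi_{X}$. So composing $s$ with $\Pi_{X}\surjto\Pi_{X'}$ does not make sense. Since $Y$ is unchanged, you simply keep $s$ and take $\mathcal{C}'=(X',Y,f',k,\bar{k},t,s)$, exactly as the paper does.
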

\begin{proof}
    Let $\xi$ be the generic point of $Y$; $\bar{\xi}$ a geometric point over $\xi$; $f' \colon Z \to Y$ the unique smooth compactification of $X$ over $Y$ (cf. \cref{Notations and Terminologies}, the discussion entitled ``Virtual Curves''). Write $X_{\xi}\coloneqq X\times_{Y} \xi$; $X_{\bar{\xi}}\coloneqq X\times_{Y} \bar{\xi}$; $Z_{\xi}\coloneqq Z\times_{Y} \xi$. Consider the following commutative diagram:
    \begin{displaymath}
	\xymatrix{
		\Pi_{X_{\bar{\xi}}}\ar[r]\ar[d] & \Pi_{X_{\xi}}\ar[r]\ar[d] & G_{K(Y)}\ar[d] \\
		\Delta_{\mathcal{C}} \ar[r] & \Pi_{X} \ar[r] & \Pi_{Y}.	
	}
    \end{displaymath}
    By [\cite{SZM}, Proposition 3.7], the left-hand vertical arrow is an isomorphism. Hence, one may regard $\cusp(\mathcal{C})$ as a set of conjugacy classes of subgroups of $\Delta_{X_{\xi}} \coloneqq \Pi_{X_{\bar{\xi}}}$. 
    It follows from a similar argument to the argument applied in the proof of [\cite{SZM}, Theorem 4.20(iii)] that $\cusp(\mathcal{C}) = \cusp(X_{\xi})$ as a set of conjugacy classes of subgroups of $\Delta_{X_{\xi}}$.
    By assumption, $S$ is stabilized by the action of $G_{K(Y)}$. Thus $S$ naturally corresponds to a set of closed points of $Z_{\xi}$ that lie in the complement $Z_{\xi} \setminus X_{\xi}$.
    By construction, such a set naturally corresponds to a set of closed irreducible subschemes of $Z$ that lie in the complement $Z\setminus X$ and dominate $Y$. Let $D$ be the union of these closed irreducible subschemes; $X'$ the complement $Z \setminus D$; $f'' \colon X' \to Y$ the morphism induced from $f'$. 
    By construction, $\mathcal{C}' : (X', Y, f'', k, \bar{k}, t, s)$ defines a pointed virtual curve, and the morphism $\mathcal{C} \to \mathcal{C}'$ induced by the open immersion $X \injto X'$ is a decuspidalization of pointed virtual curves.
\end{proof}

\begin{lemm}\label{decuspidalization_curve_theoretic}
Let $l$ be a prime number; $k$ an $l$-cyclotomically full field of characteristic zero; $\mathcal{C}:(X,Y,f,k,\bar{k},t,s)$ a pointed virtual curve of type $(g,r)$ such that $Y$ is smooth over $k$ and $r\geq 1$; $S \subset \cusp(\mathcal{C})$ a subset of admissible-type. Then there exists an \etale covering 
\begin{displaymath}
(h,i,j):\mathcal{C}'\coloneqq(X',Y',f',k',\bar{k}',t',s')\to \mathcal{C}
\end{displaymath}
satisfying the following properties:
\begin{enumerate}
    \item The morphism $(h,i,j)$ induces an isomorphism $\Pi_{\mathcal{C}'} \simto \Pi_{\mathcal{C}}$, and hence also an isomorphism $\Delta_{\mathcal{C}'} \simto \Delta_{\mathcal{C}}$.
    \item Recall that $(h,i,j)$ induces a natural bijection $\cusp(\mathcal{C}') \simto \cusp(\mathcal{C})$ (cf. (1),  [\cite{SZM}, Theorem 4.20(ii)]). Thus one may regard $S$ as a subset of $\cusp(\mathcal{C}')$. Then there exists a decuspidalization $\mathcal{C}' \to \mathcal{C}''$ that determines $S$.
\end{enumerate}
\end{lemm}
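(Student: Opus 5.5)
The plan is to reduce to \cref{Galois_actions_on_D}. The only gap between that lemma and the present statement is that here $S$ is merely of admissible-type: it is stable under conjugation by $\Pi_{\mathcal{C}}$, but not necessarily under the full conjugation action of $\Pi_{X}$. The extra symmetries come from the part of $\Pi_{X}$ that maps onto $\Pi_{Y}$ rather than through the section $s$. So I will replace $Y$ by a finite \etale cover $Y'\to Y$ that makes $\Pi_{X'}$ small enough to stabilize $S$, while leaving the virtual fundamental group unchanged.

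\emph{Construction of $Y'$.} Consider the natural action of $\Pi_{X}$ on $\cusp(\mathcal{C})$, which is a finite set. Let $H\subseteq \Pi_{X}$ be the stabilizer of $S$; it is an open subgroup. Since $S$ is stable under $\Pi_{\mathcal{C}}$, we have $H\supseteq \Pi_{\mathcal{C}}$. Since $S$ is automatically stable under $\Delta_{\mathcal{C}}$-conjugation (it consists of conjugacy classes), $H\supseteq \Delta_{\mathcal{C}}$ as well.
\begin{itemize}
\item Write $\Pi_{X}\surjto \Pi_{Y}$ with kernel $\Delta_{\mathcal{C}}$ (cf. [\cite{SZM}, Proposition 3.7]). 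Since $H$ contains this kernel, $H$ is the full preimage of an open subgroup $H_{Y}\subseteq \Pi_{Y}$.
\item $H_{Y}$ contains the image of $s(G_{k})$, because $\Pi_{\mathcal{C}}$ is the preimage of $s(G_k)$.
\item Let $i\colon Y'\to Y$ be the connected finite \etale cover corresponding to $H_{Y}$, and set $X'\coloneqq X\times_{Y}Y'$.
\item Because $s(G_k)\subseteq H_Y$, the section $s$ lifts to a section $s'\colon G_{k'}\to \Pi_{Y'}$. Here one checks that $Y'$ is geometrically connected over $k'=k$, or else takes $k'$ to be the field of constants and uses \cref{field_ext_curve_theoretic}. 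One also checks that $Y'$ is smooth over $k'$.
\end{itemize}
Together these give a pointed virtual curve $\mathcal{C}'$ and an \etale covering $(h,i,j)\colon \mathcal{C}'\to\mathcal{C}$ in the sense of [\cite{SZM}, Definition 3.3].

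\emph{Property (1).} $\Pi_{\mathcal{C}'}$ is the preimage in $\Pi_{X'}=H$ of $s'(G_{k'})$. Since $s'(G_{k'})=s(G_k)\subseteq H_Y$, this preimage maps isomorphically onto the preimage of $s(G_k)$ in $\Pi_X$, namely $\Pi_{\mathcal{C}}$. The same argument gives $\Delta_{\mathcal{C}'}\simto\Delta_{\mathcal{C}}$. This is essentially [\cite{SZM}, Corollary 3.4]; I would cite it or reprove it from the preimage description. Then [\cite{SZM}, Theorem 4.20(ii)] gives the bijection $\cusp(\mathcal{C}')\simto\cusp(\mathcal{C})$.

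\emph{Property (2).} $\Pi_{X'}=H$ stabilizes $S$, and $k'$ is still $l$-cyclotomically full, being a finite extension of $k$, or equal to $k$. So $S$, viewed inside $\cusp(\mathcal{C}')$, satisfies the hypotheses of \cref{Galois_actions_on_D}, which yields the decuspidalization $\mathcal{C}'\to\mathcal{C}''$ determining $S$.

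The main obstacle is making precise the compatibility between the $\Pi_X$-action on $\cusp(\mathcal{C})$ and the $\Pi_{X'}$-action on $\cusp(\mathcal{C}')$ under the identification above. Concretely, one must check that cuspidal inertia subgroups of $X'/Y'$ are exactly those of $X/Y$ contained in $\Pi_{X'}$, via the family description in \cref{Notations and Terminologies}. I would handle this by passing to generic fibres as in the proof of \cref{Galois_actions_on_D}, where both sides become $\cusp(X_{\xi})$ with compatible Galois actions.
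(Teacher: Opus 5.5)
Your proposal is correct and follows the paper's proof essentially step for step. The paper takes the stabilizer $H$ of $S$ directly in $\Pi_{Y}$, which is your $H_{Y}$. It passes to the cover $Y'\to Y$ determined by $H$ and the base change $X\times_{Y}Y'$, notes $s(G_k)\subset H$ so that $Y'$ is defined over $k$ and $\Pi_{\mathcal{C}'}\simto\Pi_{\mathcal{C}}$, and then applies \cref{Galois_actions_on_D}. Your hedge about the field of constants is unnecessary: $s(G_k)\subseteq H_{Y}$ already forces $H_{Y}\surjto G_{k}$, so $k'=k$.
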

\begin{proof}
    By [\cite{SZM}, Corollary 3.2; \cite{SZM}, Theorem 4.20(ii)], $\Pi_{Y}$ acts naturally on $\cusp(\mathcal{C})$. Let $H\subset \Pi_{Y}$ be the stabilizer of $S$ in $\Pi_{Y}$. Then it is immediate that $H$ is open in $\Pi_{Y}$. Write $c:Y'\to Y$ for the \etale covering of $Y$ determined by $H$. Since $s(G_k)$ stabilizes $S$, it follows that $Y'$ is defined over $k$ and that $s(G_k) \subset H$. Write $s':G_{k}\to H = \Pi_{Y'}$ for the section determined by $s$.
    By construction,
    \begin{displaymath} (X\times_{Y}c,c,\id):\mathcal{C}'\coloneqq(X\times_{Y}Y',Y',f\times_{Y}Y',k,\bar{k},t\circ c,s')\to \mathcal{C}
    \end{displaymath}
    is an \etale covering inducing an isomorphism $\Pi_{\mathcal{C}'}\simto \Pi_{\mathcal{C}}$. Therefore, without loss of generality, we may assume that the natural action of $\Pi_Y$ on $\cusp(\mathcal{C})$ stabilizes $S$. The assertion then follows immediately from \cref{Galois_actions_on_D}.
\end{proof}

\begin{prop}\label{decuspidalization-group-theoretic}
    Let $G_{\Delta}\subset G_{\Pi}$ be an inclusion of CAVC-type. Let $S \subset \cusp(G_{\Pi})$ be of admissible-type (respectively, CAVC-admissible-type). Then the inclusion $G_{\Delta}/S\subset G_{\Pi}/S$ (cf. \cref{Notations and Terminologies}, the discussion entitled ``Groups and Topologies'') is of gen-CAVC-type (respectively, CAVC-type).
\end{prop}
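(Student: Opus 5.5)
The plan is to pass to a geometric model, use the decuspidalization lemmas to realize the quotient by $S$ geometrically, and then read off the resulting type.

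\emph{Step 1: choose a model.} Fix a VD $(\mathcal{C},\phi)$ of $G_{\Delta}\subset G_{\Pi}$, where $\mathcal{C}:(X,Y,f,k,\bar{k},t,s)$ has type $(g,r)$. Via $\phi$, the set $S$ corresponds to a subset $S_{\mathcal{C}}\subset\cusp(\mathcal{C})$; this is well defined by \cref{CAVC_universal_prop} and \cref{defi_cusp_33}. The set $S_{\mathcal{C}}$ is stable under $\Pi_{\mathcal{C}}$-conjugation and satisfies the same cardinality condition as $S$.

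\emph{Step 2: realize $S$ by a decuspidalization.} Apply \cref{decuspidalization_curve_theoretic}. This gives an \'etale covering $\mathcal{C}'\to\mathcal{C}$ inducing $\Pi_{\mathcal{C}'}\simto\Pi_{\mathcal{C}}$ and $\Delta_{\mathcal{C}'}\simto\Delta_{\mathcal{C}}$, together with a decuspidalization $\mathcal{C}'\to\mathcal{C}''$ determining $S$. Replacing $\phi$ by its composite with $\Pi_{\mathcal{C}}\simto\Pi_{\mathcal{C}'}$ yields a VD whose underlying curve is $\mathcal{C}'$. The base $Y'$ of $\mathcal{C}'$ is \'etale over $Y$ and defined over $k$, so it is still smooth over the same $l$-cyclotomically full field $k$. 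Hence we may assume $\mathcal{C}=\mathcal{C}'$, and $\mathcal{C}''=(X'',Y,f'',k,\bar{k},t,s)$ is obtained by filling in the cusps in $S$, so that $\mathcal{C}''$ has type $(g,r-\card(S))$.

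\emph{Step 3: identify the quotient.} The key claim is that the decuspidalization $X\injto X''$ induces a surjection $\Pi_{\mathcal{C}}\surjto\Pi_{\mathcal{C}''}$ whose kernel is the normal closure of the cuspidal inertia subgroups in the classes of $S$. This restricts to $\Delta_{\mathcal{C}}\surjto\Delta_{\mathcal{C}''}$ with kernel the normal closure in $\Delta_{\mathcal{C}}$ of the same inertia groups. I would cite the decuspidalization results of [\cite{SZM}, \S5] (around Proposition 5.3 and Definition 5.4). Alternatively, one can argue directly, in two parts.
\begin{itemize}
\item Purity and Zariski--Nagata: $\Pi_{X''}$ is the quotient of $\Pi_X$ by the normal closure of the inertia groups of the removed divisors.
\item On the geometric part: via [\cite{SZM}, Proposition 3.7] and the isomorphism $\Pi_{X_{\bar\xi}}\simto\Delta_{\mathcal{C}}$ used in the proof of \cref{Galois_actions_on_D}, this reduces to the classical fact for a single hyperbolic curve, namely that filling in cusps kills exactly the normal closure of their inertia groups.
\end{itemize}
The two normal closures must also be compared: normal closure in $G_{\Delta}$ versus normal closure in $G_{\Pi}$. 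They agree because $S$ is $G_{\Pi}$-stable, so the $G_{\Pi}$-conjugates of the inertia groups in $S$ are again inertia groups in classes of $S$, hence $G_{\Delta}$-conjugates of representatives. Therefore $\phi$ induces an isomorphism $G_{\Pi}/S\simto\Pi_{\mathcal{C}''}$ carrying $G_{\Delta}/S$ onto $\Delta_{\mathcal{C}''}$. (The map $G_{\Delta}/S\to G_{\Pi}/S$ is injective because the kernel of $\Pi_{\mathcal{C}''}\to G_k$ is $\Delta_{\mathcal{C}''}$.)

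\emph{Step 4: conclude.} Since $\mathcal{C}''$ has type $(g,r-\card(S))$ and $2g+r-\card(S)\geq 3$ in the admissible cases, it remains a pointed virtual curve of hyperbolic type over the same $Y$ and $k$.
\begin{itemize}
\item If $r-\card(S)\geq 1$, the pair $(\mathcal{C}'',\bar\phi)$ is a VD, so the quotient inclusion is of CAVC-type. This always holds under CAVC-admissibility, which settles the respective assertion.
\item If $r-\card(S)=0$, which is possible only when $g\geq2$, it is a cls-VD, so the inclusion is of cls-CAVC-type and in either case of gen-CAVC-type.
\end{itemize}

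The main obstacle is Step 3: establishing precisely that the kernel of $\Pi_{\mathcal{C}}\to\Pi_{\mathcal{C}''}$ is the normal closure of the $S$-inertia. This requires purity for the family $X''\to Y$, not just fiberwise, together with the compatibility of the virtual fundamental group construction (built from $s$) with the quotient.
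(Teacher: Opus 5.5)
Your proposal is correct and is essentially the paper's proof. The paper also uses \cref{decuspidalization_curve_theoretic} to choose a VD admitting a decuspidalization that determines $S$, and asserts ``by construction'' an isomorphism $G_{\Pi}/S\simto\Pi_{\mathcal{C}''}$ carrying $G_{\Delta}/S$ onto $\Delta_{\mathcal{C}''}$ (this is your Step 3, resting on [\cite{SZM}, Proposition 5.3]); it then separates the cases $r''\geq 1$ and $r''=0$ with $g\geq 2$ exactly as in your Step 4, and your extra checks (normal closures in $G_{\Delta}$ and $G_{\Pi}$ agree by $G_{\Pi}$-stability of $S$, and the auxiliary base stays smooth over the same $k$) are correct details the paper leaves implicit.
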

\begin{proof}
    By \cref{decuspidalization_curve_theoretic}, there exists a VD (cf. \cref{defi_CAVCs}(ii)) $(\mathcal{C},\phi)$ of $G_{\Delta}\subset G_{\Pi}$, together with a decuspidalization $h:\mathcal{C}\to \mathcal{C}'$ of pointed virtual curves that determines the subset of $\cusp(\mathcal{C})$ 
    given by the natural image of $S$. By construction, there exists an isomorphism $i:G_{\Pi}/S\to \Pi_{\mathcal{C}'}$ such that $i(G_{\Delta}/S) = \Delta_{\mathcal{C}'}$. 
    If $S$ is of CAVC-admissible-type, then $\mathcal{C}'$ is of type $(g,r')$ with $r'\geq 1$ and $2g+r'\geq 3$; hence $G_{\Delta}/S\subset G_{\Pi}/S$ is of CAVC-type. If $S$ is of admissible-type but not of CAVC-admissible-type, then $\mathcal{C}'$ is of type $(g,0)$ with $g\geq 2$; hence $G_{\Delta}/S\subset G_{\Pi}/S$ is of cls-CAVC-type.
\end{proof}

\begin{defi}\label{defi_CAVC_morphism_type}\quad
    (i) Let $G_{\Delta}\subset G_{\Pi}$, $G_{\Delta}'\subset G_{\Pi}'$ be inclusions of gen-CAVC-type. Then we say that a morphism $f:G_{\Pi}\to G_{\Pi}'$ of inclusions is of \emph{geometric-isomorphism-type}, abbreviated as \emph{geo-iso-type}, if $f$ is injective, $f(G_{\Pi})$ is open in $G_{\Pi}'$, and $f(G_{\Delta})= G_{\Delta}'$.

    (ii) Let $G_{\Delta}\subset G_{\Pi}$, $G_{\Delta}'\subset G_{\Pi}'$ be inclusions of gen-CAVC-type. Then we say that a morphism $f:G_{\Pi}\to G_{\Pi}'$ of inclusions is of \emph{covering-type}, abbreviated as \emph{cov-type}, if $f$ is injective, and $f(G_{\Pi})$ is open in $G_{\Pi}'$.

    (iii) Let $G_{\Delta}\subset G_{\Pi}$ be an inclusion of CAVC-type; $G_{\Delta}'\subset G_{\Pi}'$ an inclusion of gen-CAVC-type. Then we say that a morphism $f:G_{\Pi}\to G_{\Pi}'$ of inclusions is of \emph{decuspidalization-type}, abbreviated as \emph{decusp-type}, if $f$ is surjective, and there exists a subset $S\subset \cusp(G_{\Pi})$ of admissible-type such that the kernel of $f$ is equal to the closed normal subgroup generated by the union of the subgroups in $S$.

    (iv) Let $G_{\Delta}\subset G_{\Pi}$ be an inclusion of CAVC-type; $G_{\Delta}'\subset G_{\Pi}'$ an inclusion of gen-CAVC-type. Then we say that a morphism $f:G_{\Pi}\to G_{\Pi}'$ of inclusions is of \emph{geometric-decuspidalization-type}, abbreviated as \emph{geo-decusp-type}, if $f(G_{\Pi})$ is open in $G_{\Pi}'$, $f(G_{\Delta})= G_{\Delta}'$, and there exists a subset $S\subset \cusp(G_{\Pi})$ of admissible-type such that the kernel of $f$ is equal to the closed normal subgroup generated by the union of the subgroups in $S$.

    (v) Let $G_{\Delta}\subset G_{\Pi}$ be an inclusion of CAVC-type; $G_{\Delta}'\subset G_{\Pi}'$ an inclusion of gen-CAVC-type. Then we say that a morphism $f:G_{\Pi}\to G_{\Pi}'$ of inclusions is of \emph{admissible-type} if $f(G_{\Pi})$ is open in $G_{\Pi}'$, and there exists a subset $S\subset \cusp(G_{\Pi})$ of admissible-type such that the kernel of $f$ is equal to the closed normal subgroup generated by the union of the subgroups in $S$.

    (vi) Let $G_{\Delta}\subset G_{\Pi}$ be an inclusion of cls-CAVC-type; $G_{\Delta}'\subset G_{\Pi}'$ an inclusion of gen-CAVC-type. Then we say that a morphism $f:G_{\Pi}\to G_{\Pi}'$ of inclusions is of \emph{admissible-type} (respectively, of \emph{decusp-type}; of \emph{geo-decusp-type}) if $f$ is of cov-type (respectively, if $f$ is an isomorphism; if $f$ is of geo-iso-type).
\end{defi}

\begin{prop}\label{well_defined_for_kernel_cusp}
    In the situation of \cref{defi_CAVC_morphism_type}(v), the subset $S$ is uniquely determined by the group homomorphism $f:G_{\Delta}\to G_{\Delta}'$. We refer to $S$ as the kernel class of $f$.
\end{prop}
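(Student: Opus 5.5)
We must show that if $S_1,S_2\subset\cusp(G_{\Pi})$ are admissible-type subsets with $N_{S_1}=N_{S_2}=\ker f$, where $N_{S}$ denotes the closed normal subgroup of $G_{\Pi}$ generated by the union of the subgroups in $S$, then $S_1=S_2$. Since $\ker f\subset G_{\Delta}$, this is a statement about $G_{\Delta}$ alone: every element of $S$ recovers itself from $\ker(f|_{G_{\Delta}})$. So we aim for the intrinsic description
\begin{displaymath}
S=\{\alpha\in\cusp(G_{\Pi})\ |\ \text{some (equivalently every) } I\in\alpha \text{ satisfies } I\subset\ker f\}.
\end{displaymath}
The inclusion ``$\subseteq$'' is trivial. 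For ``$\supseteq$'' we must show that if $\alpha\notin S$, then no $I\in\alpha$ lies in $N_S$.

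First, fix a VD $(\mathcal{C},\phi)$. By \cref{decuspidalization_curve_theoretic}, after replacing $\mathcal{C}$ by an \etale covering inducing $\Pi_{\mathcal{C}'}\simto\Pi_{\mathcal{C}}$, there is a decuspidalization $\mathcal{C}\to\mathcal{C}''$ determining $S$. By construction (cf. the proof of \cref{decuspidalization-group-theoretic}), this identifies $G_{\Delta}/N_S$ with $\Delta_{\mathcal{C}''}$.

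Next, we reduce to the geometric generic fibre. Via [\cite{SZM}, Proposition 3.7] and the argument in the proof of \cref{Galois_actions_on_D}, $\Delta_{\mathcal{C}}$ is $\Pi_{X_{\bar{\xi}}}$ and $\cusp(\mathcal{C})=\cusp(X_{\bar\xi})$. Moreover, $\Delta_{\mathcal{C}''}=\Pi_{X''_{\bar\xi}}$, where $X''_{\bar\xi}$ is the curve $X_{\bar\xi}$ with the cusps in $S$ filled in.

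Suppose $\alpha\notin S$. Then $\alpha$ corresponds to a cusp $x$ of $X_{\bar\xi}$ that remains a cusp of $X''_{\bar\xi}$. The image of the inertia group $I_x\simeq\widehat{\Z}(1)$ in $\Pi_{X''_{\bar\xi}}$ is the cuspidal inertia group at $x$, which is nontrivial; in fact it is isomorphic to $\widehat{\Z}$. Hence $I_x\not\subset N_S$. This applies in both cases: when $X''_{\bar\xi}$ is hyperbolic, by the standard structure of fundamental groups of hyperbolic curves; and when the decuspidalization lands in the cls-CAVC case, only $g\geq2$ arises there, and since $x$ survives, that case does not occur for $\alpha\notin S$.

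Note that $X''_{\bar\xi}$ is always hyperbolic or of genus $\geq 2$ by the definition of admissible-type. This is exactly what guarantees that the cuspidal inertia at a surviving cusp injects; for instance, $\mathbb{P}^1$ minus two points would be a problem, and it is excluded.

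The main obstacle is justifying that this description is independent of the chosen VD and of the auxiliary covering in \cref{decuspidalization_curve_theoretic}. This follows from \cref{CAVC_universal_prop}, since the set $\{\alpha : I\subset\ker f\}$ is defined purely group-theoretically from $\cusp(G_{\Pi})$ and $\ker f$. Consequently $S$ is determined by $f|_{G_{\Delta}}$, proving \cref{well_defined_for_kernel_cusp}.
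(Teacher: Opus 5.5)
Your argument is correct, but it follows a different and more explicit route than the paper. The paper reduces, via [\cite{SZM}, Theorem 4.20(ii)(iii)], to the fundamental group of a genuine hyperbolic curve. It then passes to an open subgroup so that the curve has at least three cusps, and simply cites [\cite{MZK3}, Proposition A.8(ii)(iii)].

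You instead prove the intrinsic formula $S=\{\alpha\in\cusp(G_{\Pi}) \mid I\subset\ker f \text{ for } I\in\alpha\}$ directly. You realize $G_{\Delta}/N_S$ as $\Delta_{\mathcal{C}''}$ for the decuspidalization supplied by \cref{decuspidalization_curve_theoretic}, which is the same identification the paper uses in \cref{decuspidalization-group-theoretic}. You then observe that the inertia group at a surviving cusp of the hyperbolic curve $X''_{\bar\xi}$ is $\widehat{\Z}(1)\neq\{1\}$.

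What your route buys:
\begin{itemize}
\item It needs no reduction to $r\geq 3$ and no external black box.
\item It does not lean on \cref{decusp_universal}, whose proof already presupposes the kernel class.
\item It shows exactly where admissibility enters.
\end{itemize}
The paper's route is shorter but opaque.

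Two small corrections.
\begin{itemize}
\item $\mathbb{P}^1$ minus two points is not a problem for your argument. Its fundamental group is $\widehat{\Z}$, and both cuspidal inertia groups map isomorphically onto it. The genuinely bad cases are quotients of type $(0,1)$ or $(0,0)$, where the quotient is trivial, so distinct $S$ give the same kernel. For instance, in type $(0,3)$ the subsets $\{a,b\}$ and $\{a,c\}$ both give $N_S=G_{\Delta}$. These are precisely the cases that admissibility excludes.
\item The ``main obstacle'' you flag is not one. Once the equality $S=\{\alpha\mid I\subset N_S\}$ is established using any VD, the right-hand side is defined purely group-theoretically from $\cusp(G_{\Pi})$ and $\ker f$. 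VD-independence is therefore automatic, given \cref{CAVC_universal_prop}.
\end{itemize}
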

\begin{proof}
    By [\cite{SZM}, Theorem 4.20(ii)(iii)], one may reduce the claim to the corresponding assertion for hyperbolic curves. Moreover, after passing to a suitable open subgroup, one may assume that the curve that corresponds to $G_{\Delta}$ is of type $(g,r)$ with $r\geq 3$. Then the assertion follows immediately from [\cite{MZK3}, Proposition A.8(ii)(iii)].
\end{proof}

The following corollary follows immediately from \cref{defi_CAVC_morphism_type}.

\begin{coro}\label{abs_nonsense_CAVC_morphism}
    Let $G_{\Delta}\subset G_{\Pi}$, $G_{\Delta}'\subset G_{\Pi}'$ be inclusions of gen-CAVC-type; $f:G_{\Pi}\to G_{\Pi}'$ a morphism of inclusions. Then:

    (i) $f$ is an isomorphism if and only if $f$ is of geo-iso-type and decusp-type.

    (ii) $f$ is of geo-iso-type if and only if $f$ is of cov-type and geo-decusp-type.

    (iii) If $f$ is of decusp-type, then $f$ is of geo-decusp-type.

    (iv) If $f$ is either of cov-type or of geo-decusp-type, then $f$ is of admissible-type.

    (v) Assume further that $G_{\Delta}''\subset G_{\Pi}''$ is an inclusion of gen-CAVC-type; $f':G'_{\Pi}\to G_{\Pi}''$ a morphism of inclusions. Then if both $f$ and $f'$ are of geo-iso-type (respectively, of cov-type; of decusp-type; of geo-decusp-type), then their composition $f'\circ f$ is again of geo-iso-type (respectively, of cov-type; of decusp-type; of geo-decusp-type).

    (vi) $f$ is of geo-decusp-type if and only if there exist an inclusion $G_{\Delta}''\subset G_{\Pi}''$ of gen-CAVC-type and two morphisms $g:G_{\Pi}''\to G_{\Pi}'$, $h:G_{\Pi}\to G_{\Pi}''$ of inclusions such that $f = g\circ h$, $g$ is of geo-iso-type, and $h$ is of decusp-type.

    (vii) $f$ is of admissible-type if and only if there exist an inclusion $G_{\Delta}''\subset G_{\Pi}''$ of gen-CAVC-type and two morphisms $g:G_{\Pi}''\to G_{\Pi}'$, $h:G_{\Pi}\to G_{\Pi}''$ of inclusions such that $f = g\circ h$, $g$ is of cov-type, and $h$ is of decusp-type.
\end{coro}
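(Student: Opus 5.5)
The plan is to unwind \cref{defi_CAVC_morphism_type} item by item, splitting throughout according to whether the source inclusion $G_{\Delta}\subset G_{\Pi}$ is of CAVC-type or of cls-CAVC-type. In the cls-CAVC case, (vi) of that definition identifies admissible-type, decusp-type and geo-decusp-type with cov-type, isomorphism and geo-iso-type, respectively. In the CAVC case the kernel of $f$ is the closed normal subgroup $N_S$ generated by some $S\subset\cusp(G_{\Pi})$ of admissible-type. Two facts will be used repeatedly. First, $S=\emptyset$ gives $N_S=\{1\}$, so $f$ is injective precisely when $S$ may be taken empty; here \cref{well_defined_for_kernel_cusp} makes $S$ unique. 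Second, a morphism of inclusions satisfies $f^{-1}(G_{\Delta}')=G_{\Delta}$, hence $\ker f\subset G_{\Delta}$. This is consistent with $N_S\subset G_{\Delta}$, since cuspidal inertia groups lie in $G_{\Delta}$.

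For (i), an isomorphism is injective, surjective and open with $f(G_{\Delta})=G_{\Delta}'$; it is therefore of geo-iso-type, and of decusp-type with $S=\emptyset$ (or directly in the cls case). Conversely, geo-iso-type gives injectivity and decusp-type gives surjectivity. A continuous bijection of profinite groups is a homeomorphism, so $f$ is an isomorphism. For (ii), geo-iso-type is injective and open, hence of cov-type; it is also of geo-decusp-type with $S=\emptyset$. Conversely, geo-decusp-type gives $f(G_{\Delta})=G_{\Delta}'$, and cov-type gives injectivity and openness. For (iii), a surjection is open and satisfies $f(G_{\Delta})=f(f^{-1}(G_{\Delta}'))=G_{\Delta}'$. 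For (iv), cov-type is admissible with $S=\emptyset$, and geo-decusp-type is admissible by simply forgetting the condition on $G_{\Delta}$.

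For (v), compositions of injective open maps are injective and open, and compositions of surjections are surjective, so the cov-type and geo-iso-type cases are immediate. The decusp-type and geo-decusp-type cases are the real issue. Here $\ker(f'\circ f)=f^{-1}(N_{S'})$, and I must show this equals $N_{S''}$ for some admissible-type $S''\subset\cusp(G_{\Pi})$. I would realize $f$ via \cref{decuspidalization-group-theoretic} and \cref{decuspidalization_curve_theoretic} as a decuspidalization of pointed virtual curves. Then $\cusp(G_{\Pi}')$ is naturally identified with $\cusp(G_{\Pi})\setminus S$ via images of inertia groups, and I take $S''$ to be $S$ together with the preimage of $S'$. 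With this choice $f^{-1}(N_{S'})=N_{S''}$, because $N_S\subset$ both sides and $f(N_{S''})=N_{S'}$. Admissibility of $S''$ then follows because the remaining curve is still of a permitted type.

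For (vi) and (vii), given $S$, set $G_{\Pi}''=G_{\Pi}/S$. This is of gen-CAVC-type by \cref{decuspidalization-group-theoretic}, and the quotient map $h$ is of decusp-type. The map $f$ factors as $g\circ h$ with $g$ injective. Openness of $g$ is inherited, and so is $g(G_{\Delta}'')=G_{\Delta}'$ in case (vi). The converse directions follow from $\ker(g\circ h)=\ker h$ and the fact that images are preserved. I expect the main obstacle to be the cusp bookkeeping in (v), namely identifying $\cusp(G_{\Pi}/S)$ with $\cusp(G_{\Pi})\setminus S$ through the decuspidalization. The compositions that land in cls-CAVC-type are handled by clause (vi) of the definition.
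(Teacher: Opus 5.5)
Your proposal is correct and follows the paper's route. The paper asserts that every item follows immediately from \cref{defi_CAVC_morphism_type}, and your item-by-item unwinding (splitting into the CAVC and cls-CAVC cases, and using $S=\emptyset$ for injective maps) is exactly that; your cusp bookkeeping for the decusp-type and geo-decusp-type cases of (v) supplies the non-definitional input the paper leaves implicit.

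Two small points to tidy in (v). First, in the geo-decusp-type case $f$ is not itself a decuspidalization, so factor it via your (vi) as a decusp-type map followed by a geo-iso-type map. Alternatively, observe directly that $f(N_{S''})=N_{S'}$ still holds, because $S'$ is $G_{\Pi}'$-stable and $f(G_{\Delta})=G_{\Delta}'$. Second, check that the image of $f'\circ f$ is open: $f(G_{\Pi})$ has finite index in $G_{\Pi}'$, so $f'(f(G_{\Pi}))$ has finite index in the open subgroup $f'(G_{\Pi}')$.
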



\begin{lemm}\label{decusp_universal} 
    Let $G_{\Delta}\subset G_{\Pi}$ be an inclusion of CAVC-type; $G_{\Delta}'\subset G_{\Pi}'$ an inclusion of gen-CAVC-type; $f:G_{\Pi}\to G_{\Pi}'$ a morphism of inclusions of geo-decusp-type. Then for each representative $I\subset G_{\Delta}$ of an element $\alpha\in \cusp(G_{\Pi})$, one of the following holds:
    \begin{enumerate}
        \item $f(I) = \{e\}$;
        \item $f$ induces an isomorphism $I \simto J\subset G_{\Delta}'$, where $J$ is a subgroup of $G_{\Delta}'$ that represents an element of $\cusp(G_{\Pi}')$.
    \end{enumerate}
    If, moreover, $f$ is of decusp-type, and both $I^{\dagger}$ and $I^{\ddagger}$ are representatives of two \emph{distinct} elements of $\cusp(G_{\Pi})$ and satisfy (2), then their respective images in $G_{\Delta}'$ represent two \emph{distinct} elements of $\cusp(G_{\Pi}')$.
\end{lemm}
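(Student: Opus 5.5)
By \cref{abs_nonsense_CAVC_morphism}(vi), I will factor $f$ as $f = g\circ h$. Here $h:G_{\Pi}\to G_{\Pi}''$ is of decusp-type, with kernel generated by the subgroups in some $S\subset\cusp(G_{\Pi})$ of admissible-type, and $g:G_{\Pi}''\to G_{\Pi}'$ is of geo-iso-type, hence injective with $g(G_{\Delta}'')=G_{\Delta}'$. Since $g$ is injective, restricts to an isomorphism $G_{\Delta}''\simto G_{\Delta}'$, and is a morphism of inclusions, I expect it to carry $\cusp(G_{\Pi}'')$ bijectively onto $\cusp(G_{\Pi}')$. To see this, view $g(G_{\Pi}'')$ as an open subgroup of $G_{\Pi}'$ containing $G_{\Delta}'$. \cref{covering_group_theoretic} then says that the cusps of $g(G_{\Pi}'')$ are those induced from $\cusp(G_{\Pi}')$ by intersecting with $G_{\Delta}'$, and this intersection does nothing because the whole of $G_{\Delta}'$ is hit. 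Combining this with \cref{CAVC_universal_prop}, which gives independence of the VD, the intrinsic $\cusp$ of $G_{\Pi}''$ is transported to that of $G_{\Pi}'$. This reduces both assertions to the case where $f$ itself is of decusp-type.

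For a decusp-type $f$, I will realize the situation geometrically. By \cref{decuspidalization_curve_theoretic}, together with the proof of \cref{decuspidalization-group-theoretic}, there is a VD $(\mathcal{C},\phi)$ of $G_{\Delta}\subset G_{\Pi}$ and a decuspidalization $\mathcal{C}\to\mathcal{C}'$ that determines $S$, with $G_{\Pi}/S\simeq\Pi_{\mathcal{C}'}$ compatibly with the geometric subgroups. Via [\cite{SZM}, Proposition 3.7] and the argument of \cref{Galois_actions_on_D}, I identify $\Delta_{\mathcal{C}}$ and $\Delta_{\mathcal{C}'}$ with the geometric fundamental groups of the generic fibres $X_{\bar\xi}$ and $X'_{\bar\xi}$, and $\cusp$ with the cusps of these hyperbolic curves. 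The problem then becomes the classical one for an open immersion $U=X'_{\bar\xi}\setminus\{\text{points of }S\}\injto X'_{\bar\xi}$ of curves over an algebraically closed field of characteristic zero.

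For this classical statement I will use the standard structure of fundamental groups. A cuspidal inertia group $I\simeq\widehat{\Z}$ of $U$ at a point $x$ maps to the inertia group of $x$ in $X'_{\bar\xi}$. If $x$ is filled in, that target is trivial, giving case (1). Otherwise $x$ remains a cusp of $X'_{\bar\xi}$ and the map of inertia groups is an isomorphism, since both are procyclic with compatible tame generators given by loops around $x$, giving case (2). Distinctness comes from the fact that distinct cusps of a hyperbolic curve have inertia subgroups that are not conjugate; indeed they are commensurably terminal and pairwise intersect trivially, as in [\cite{MZK3}, Proposition A.8]. Two cusps of $U$ that survive in $X'_{\bar\xi}$ are distinct points there, so their images represent distinct classes.

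I expect the main obstacle to be the isomorphism claim in (2). It requires that the map on inertia be injective, not just nontrivial, and that the image be a full inertia group rather than a finite-index subgroup of one. To handle this cleanly I would first pass to an open subgroup using \cref{covering_group_theoretic}, as in the proof of \cref{well_defined_for_kernel_cusp}, and then check it on the maximal pro-$l$ or abelian quotients. There, inertia generators of the remaining cusps form part of a standard generating system, which is unaffected by killing the generators in $S$, provided $X'_{\bar\xi}$ is still hyperbolic or of type $(g,0)$ with $g\ge2$. This is guaranteed by the admissible-type hypothesis.
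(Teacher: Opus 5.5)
Your proposal is correct and follows essentially the paper's route. The decusp-type case is realized geometrically via the VD and decuspidalization supplied by \cref{decuspidalization_curve_theoretic}, and the general geo-decusp case is reduced to it through the factorization of \cref{abs_nonsense_CAVC_morphism}(vi), using that a geo-iso-type morphism transports cusps bijectively (you invoke \cref{covering_group_theoretic} where the paper cites [\cite{SZM}, Theorem 4.20(ii)]).

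The only real difference is in the decusp-type step: the paper simply cites [\cite{SZM}, Proposition 5.3(ii)], whereas you re-derive it on the geometric generic fibre as in \cref{Galois_actions_on_D}. There, the step you flag as the main obstacle is immediate. The image of a cuspidal inertia group under the surjection of geometric fundamental groups is the full inertia group at the image point, and a surjection $\widehat{\Z}(1)\twoheadrightarrow\widehat{\Z}(1)$ is an isomorphism, so the detour through open subgroups and abelian quotients is unnecessary.
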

\begin{proof}
    First, we assume that $f$ is of decusp-type. By \cref{abs_nonsense_CAVC_morphism}(iii)(iv), $f$ is of admissible-type.
    Let $S$ be the kernel class of $f$ (cf. \cref{well_defined_for_kernel_cusp}).
    By \cref{defi_CAVC_morphism_type}(iii) and \cref{decuspidalization_curve_theoretic}, there exists a VD $(\mathcal{C},\phi)$ of $G_{\Delta}\subset G_{\Pi}$ together with a decuspidalization $h:\mathcal{C}\to\mathcal{C}'$ that determines the natural image of $S$ in $\cusp(\mathcal{C})$. Let $h_{\ast}:\Pi_{\mathcal{C}}\to\Pi_{\mathcal{C}'}$ be the morphism of virtual fundamental groups induced by $h$ (cf. [\cite{SZM}, Corollary 3.4(i)]). One sees immediately that there exists a unique isomorphism $\phi':\Pi_{\mathcal{C}'}\simto G_{\Pi}'$ such that $f\circ \phi = \phi'\circ h_{\ast}$. Moreover, $(\mathcal{C}',\phi')$ is a VD (respectively, cls-VD) if $G_{\Delta}'\subset G_{\Pi}'$ is of CAVC-type (respectively, of cls-CAVC-type). Therefore, the assertion follows immediately from [\cite{SZM}, Proposition 5.3(ii)]. The general case follows immediately from \cref{abs_nonsense_CAVC_morphism}(vi) and [\cite{SZM}, Theorem 4.20(ii)].
\end{proof}

\begin{defi}\label{determined_cusp_morphism}\quad

    (i) Let $G_{\Delta}\subset G_{\Pi}$, $G_{\Delta}'\subset G_{\Pi}'$ be inclusions of gen-CAVC-type; $f:G_{\Pi}\to G_{\Pi}'$ a morphism of inclusions of cov-type. By \cref{covering_group_theoretic}, $f$ induces a natural surjection $h:\cusp(G_{\Pi})\surjto \cusp(G_{\Pi}')$. Then we say that $f$ \emph{determines} $h$.
    
    (ii) Let $G_{\Delta}\subset G_{\Pi}$ be an inclusion of CAVC-type; $G_{\Delta}'\subset G_{\Pi}'$ an inclusion of gen-CAVC-type; $f:G_{\Pi}\to G_{\Pi}'$ a morphism of inclusions of decusp-type. By \cref{decusp_universal}, $f$ induces a natural injection $h:\cusp(G_{\Pi}')\injto \cusp(G_{\Pi})$. Then we say that $f$ \emph{determines} $h$. 
\end{defi}

\begin{remm} 
    In \cref{determined_cusp_morphism}, if both assumptions that are discussed in the situations (i) and (ii) are satisfied, then $f$ determines two maps between the sets of cusps in opposite directions. However, in this case, the morphisms of inclusions are isomorphisms (cf. \cref{abs_nonsense_CAVC_morphism}(i)(ii)(iii)), so the two sets of cusps are naturally isomorphic. In particular, there is no ambiguity. 
\end{remm}

\begin{defi}\label{defi_constructible_inclusion}\quad

    (i) Let $\Delta\subset \Pi$ be an inclusion of gen-CAVC-type. A collection of \emph{geometric data}, abbreviated as \emph{GD}, of $\Delta\subset \Pi$ is defined to be a tuple $(X,\phi)$, where $X$ is a hyperbolic curve over an AF (cf. \cref{Notations and Terminologies}, the discussion entitled ``Fields, Schemes and Curves'') and $\phi:\Pi_{X}\simto \Pi$ is a group isomorphism satisfying $\phi(\Delta_{X})=\Delta$.

    (ii) Let $\Delta\subset \Pi$ be an inclusion of gen-CAVC-type. Then we say that $\Delta\subset \Pi$ is \emph{constructible} if there exists a GD of $\Delta\subset \Pi$.
\end{defi}

\begin{coro}\label{constructible_for_geometric_section}
    Let $\Delta\subset \Pi$ be an inclusion of gen-CAVC-type; $((X,Y,f,k,\bar{k},t,s), \phi)$ a VD of $\Delta\subset \Pi$ such that $k$ is an AF. Assume that $s$ is induced by a rational point of $Y$. Then $\Delta\subset \Pi$ is constructible.
\end{coro}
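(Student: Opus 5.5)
The idea is to show that when the section $s$ comes from a rational point $y\in Y(k)$, the fibre of $f$ over $y$ is a hyperbolic curve over the AF $k$ whose fundamental group realizes $\Pi_{\mathcal{C}}$. This fibre, together with the transported isomorphism, then gives a GD of $\Delta\subset\Pi$.

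Write $\mathcal{C}=(X,Y,f,k,\bar{k},t,s)$. Let $y:\sp(k)\to Y$ be the rational point inducing $s$, so that $s=y_{\ast}$ up to inner automorphism. Set $X_{y}\coloneqq X\times_{Y,y}\sp(k)$. Since $f$ is a family of hyperbolic curves of type $(g,r)$, $X_{y}$ is a hyperbolic curve of type $(g,r)$ over $k$, and $k$ is an AF by hypothesis.

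The key step is to identify $\Pi_{\mathcal{C}}$ with $\Pi_{X_{y}}$, compatibly with the geometric subgroups. I recall from [\cite{SZM}, Definition 3.5] that $\Pi_{\mathcal{C}}$ is the fibre product $\Pi_{X}\times_{\Pi_{Y}}G_{k}$ taken via $s$, and that $\Delta_{\mathcal{C}}$ is the kernel of the projection to $G_{k}$. For families of hyperbolic curves in characteristic zero there is the homotopy exact sequence
\begin{displaymath}
1\to \Delta_{X_{\bar y}}\to \Pi_{X}\to \Pi_{Y}\to 1.
\end{displaymath}
By [\cite{SZM}, Proposition 3.7], the fibre term $\Delta_{X_{\bar y}}$ identifies with $\Delta_{\mathcal{C}}$. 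Pulling this sequence back along $y_{\ast}=s:G_{k}\to\Pi_{Y}$ gives $1\to\Delta_{X_{\bar y}}\to \Pi_{X_{y}}\to G_{k}\to 1$. Here the natural map $\Pi_{X_{y}}\to\Pi_{X}\times_{\Pi_{Y}}G_{k}$ is an isomorphism: the fibres of $f$ are geometrically connected, so base change along $y$ is compatible with the exact sequences, and the five lemma applies. This yields an isomorphism $\psi:\Pi_{X_{y}}\simto\Pi_{\mathcal{C}}$ with $\psi(\Delta_{X_{y}})=\Delta_{\mathcal{C}}$.

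Set $\Phi\coloneqq\phi^{-1}\circ\psi:\Pi_{X_{y}}\simto\Pi$. Since $\phi(\Delta)=\Delta_{\mathcal{C}}$, we get $\Phi(\Delta_{X_{y}})=\Delta$, so $(X_{y},\Phi)$ is a GD in the sense of \cref{defi_constructible_inclusion}(i), and $\Delta\subset\Pi$ is constructible. In the cls-CAVC case the same argument works verbatim with $r=0$.

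The main obstacle is the identification of $\Pi_{X_{y}}$ with the fibre product. This requires exactness of the homotopy sequence for the family $X\to Y$, in particular injectivity of $\Delta_{X_{\bar y}}\to\Pi_{X}$, and I expect to cite [\cite{SZM}, Proposition 3.7] for it. It also requires that everything is compatible up to inner automorphism with the chosen base points, which the paper's conventions allow us to ignore.
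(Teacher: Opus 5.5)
Your proposal is correct and takes the same route as the paper. The paper's proof is the single citation [\cite{SZM}, Corollary 3.8], which is exactly the identification $\Pi_{X_{y}}\simto\Pi_{\mathcal{C}}$ (compatible with the geometric subgroups) for a section arising from a rational point $y$; you reprove this via the homotopy exact sequence of the family and the five lemma, and then compose with $\phi^{-1}$ to obtain the GD, a step the paper leaves implicit.
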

\begin{proof}
    The assertion follows immediately from [\cite{SZM}, Corollary 3.8].
\end{proof}

\begin{prop}\label{anabelian_CAVC}
    Let $\Delta\subset \Pi$ be a constructible inclusion of gen-CAVC-type. Then the following assertions hold:

    (i) Let $(X^\dagger,\phi^\dagger)$, $(X^\ddagger,\phi^\ddagger)$ be two GDs of $\Delta\subset \Pi$. Then there exists a \emph{unique} isomorphism $f: X^\dagger\to X^\ddagger$ of schemes such that $f_{\ast} = (\phi^\ddagger)^{-1}\circ \phi^\dagger$. Equivalently, a GD is unique up to canonical isomorphism.

    (ii) One may construct the following data:
    \begin{enumerate}
        \item an ACAF (cf. \cref{Notations and Terminologies}, the discussion entitled ``Fields, Schemes and Curves''), which we denote by $\base(\Delta\subset \Pi)$;
        \item a hyperbolic curve over $\base(\Delta\subset \Pi)$, which we denote by $\sch(\Delta\subset \Pi)$;
        \item an FFUC (cf. \cref{Notations and Terminologies}, the discussion entitled ``Fields, Schemes and Curves'') of $X$, which we denote by $\tilde{K}(\Delta\subset \Pi)$;
        \item a natural inclusion $\base(\Delta\subset \Pi)\injto K(\sch(\Delta\subset \Pi))$ (cf. \cref{Notations and Terminologies}, the discussion entitled ``Fields, Schemes and Curves'') of fields, which we denote by $\alpha(\Delta\subset \Pi)$;
        \item a natural inclusion $K(\sch(\Delta\subset \Pi))\injto \tilde{K}(\Delta\subset \Pi)$ of fields, which we denote by $\beta(\Delta\subset \Pi)$,
    \end{enumerate}
    from $\Delta\subset \Pi$ functorially with respect to isomorphisms of topological groups such that the following properties hold:
    \begin{enumerate}
        \item The action of $\Pi$ on $\base(\Delta\subset \Pi)$ functorially induced by the conjugation action of $\Pi$ on itself factors through the quotient $\Pi\surjto \Pi/\Delta$. Moreover, this action induces a natural isomorphism $\Pi/\Delta\simto \gal (\base(\Delta\subset \Pi)/(\base(\Delta\subset \Pi))^{\Pi/\Delta})$.
        \item The action of $\Pi$ on $\sch(\Delta\subset \Pi)$ functorially induced by the conjugation action of $\Pi$ on itself factors through the quotient $\Pi\surjto \Pi/\Delta$. Moreover, $\base(\Delta\subset \Pi)$ admits an embedding into $\tilde{K}(\Delta\subset \Pi)$ via $\alpha(\Delta\subset \Pi)$ and $\beta(\Delta\subset \Pi)$. Furthermore, the quotient $(\sch(\Delta\subset \Pi))^{\Pi/\Delta}$ of $\sch(\Delta\subset \Pi)$ by the action of $\Pi/\Delta$ exists and is a hyperbolic curve over $(\base(\Delta\subset \Pi))^{\Pi/\Delta}$.
        \item The action of $\Pi$ on $\tilde{K}(\Delta\subset \Pi)$ functorially induced by the conjugation action of $\Pi$ on itself, together with the natural inclusion $\beta(\Delta\subset \Pi)$, induces a natural isomorphism $\phi(\Delta\subset \Pi):\Pi_{(\sch(\Delta\subset \Pi))^{\Pi/\Delta}}\simto \Pi$ up to inner automorphism.
    \end{enumerate}
    Moreover, $((\sch(\Delta\subset \Pi))^{\Pi/\Delta}, \phi(\Delta\subset \Pi))$ is a GD of $\Delta\subset \Pi$. We shall say that $((\sch(\Delta\subset \Pi))^{\Pi/\Delta}, \phi(\Delta\subset \Pi))$ is the \emph{canonical GD} associated to $\Delta\subset \Pi$, and that $(\sch(\Delta\subset \Pi))^{\Pi/\Delta}$ is the \emph{canonical scheme} associated to $\Delta\subset \Pi$.
\end{prop}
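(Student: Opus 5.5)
The plan is to reduce both assertions to the classical absolute anabelian results for hyperbolic curves over arithmetic fields: Mochizuki's Grothendieck conjecture [\cite{MZK2}, Theorem 1.9], together with the absolute reconstruction of the base field and the function field from $\Pi_{X}$ ([\cite{MZK3}, Theorem 2.6]; [\cite{RNSPM}, Theorem D] for the remaining cases). The key input is that a GD $(X,\phi)$ identifies the abstract inclusion $\Delta\subset\Pi$ with $\Delta_{X}\subset\Pi_{X}$.

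For (i), take two GDs $(X^\dagger,\phi^\dagger)$ and $(X^\ddagger,\phi^\ddagger)$, with base fields $k^\dagger$ and $k^\ddagger$. The composite $(\phi^\ddagger)^{-1}\circ\phi^\dagger:\Pi_{X^\dagger}\simto\Pi_{X^\ddagger}$ maps $\Delta_{X^\dagger}$ onto $\Delta_{X^\ddagger}$, so it induces an isomorphism $G_{k^\dagger}\simto G_{k^\ddagger}$.
\begin{enumerate}
\item In the MLF case, the absolute reconstruction of [\cite{MZK3}] first recovers the field $\bar{k}$ functorially from $\Pi_{X}$. It uses the fact that an isomorphism preserving the geometric subgroups induces an isomorphism of the curves up to a unique choice.
\item In the NF case, Mochizuki's relative Grothendieck conjecture over sub-$p$-adic fields applies once the Galois isomorphism is shown to come from a field isomorphism. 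This follows from Neukirch--Uchida, so the isomorphism is geometric.
\end{enumerate}
Uniqueness of the scheme isomorphism $f$ with $f_\ast=(\phi^\ddagger)^{-1}\circ\phi^\dagger$ follows from the injectivity of $\Aut(X)\to\Out(\Pi_{X})$ for hyperbolic curves. Note that $f_\ast$ is only defined up to inner automorphism, and the equality is to be read in that sense.

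For (ii), I would apply the functorial reconstruction algorithms directly to the abstract pair $\Delta\subset\Pi$. Because $\Delta$ is given, the quotient $\Pi/\Delta$ plays the role of $G_{k}$. The construction then proceeds as follows.
\begin{enumerate}
\item Take $\base(\Delta\subset\Pi)$ to be the ACAF reconstructed from $\Pi/\Delta$, or from $\Pi$, as in the cited absolute results. In the MLF case one reconstructs $\bar{k}$ with its topology via Kummer theory and the cyclotome; in the NF case one uses the function-field reconstruction.
\item Take $\tilde{K}(\Delta\subset\Pi)$ to be the field reconstructed from $\Pi$ by Kummer theory of the function field, i.e.\ the colimit over open subgroups $H\subseteq\Pi$ of the reconstructed function fields $K(H)$.
\item Take $K(\sch(\Delta\subset\Pi))$ to be the union over open $H\supseteq\Delta$, so that it becomes the function field over the base ACAF.
\item Define $\sch(\Delta\subset\Pi)$ as the hyperbolic curve whose compactification is the regular model of that function field, with the cusps removed. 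The cusps are recovered from $\cusp(\Delta\subset\Pi)$ via decomposition groups, using \cref{CAVC_universal_prop}.
\end{enumerate}
Functoriality with respect to isomorphisms is inherited from the functoriality of the cited algorithms. The $\Pi$-actions factor through $\Pi/\Delta$, because $\Delta$-conjugation acts trivially on the objects fixed by $\Delta$.

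Properties (1)--(3) are then checked by comparing with a chosen GD $(X,\phi)$, which exists by constructibility. Transporting via $\phi$ gives isomorphisms $\base\simto\bar{k}$, $\sch\simto X\times_{k}\bar{k}$ and $\tilde{K}\simto$ an FFUC of $X$, all compatible with the actions. Galois descent then produces the quotient $\sch^{\Pi/\Delta}\simeq X$, and $\phi(\Delta\subset\Pi)$ is obtained as the monodromy of $\tilde{K}$ over $K(\sch^{\Pi/\Delta})$. The canonical GD is then a GD, and by (i) it is unique up to canonical isomorphism.

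The main obstacle is making the reconstruction genuinely independent of the chosen GD in the NF case. There the available absolute results give "determined up to isomorphism" rather than an explicit functorial construction. I would handle this by using the MLF-style Belyi/Kummer reconstruction algorithms in [\cite{MZK3}], which apply to sub-$p$-adic fields through local decomposition groups, or alternatively by defining the objects as a limit over GDs glued via the canonical isomorphisms of (i).
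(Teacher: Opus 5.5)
Your strategy is in substance the paper's. The paper proves (i) and (ii) purely by citing [\cite{RNSPM}, Theorem D] and [\cite{MZK6}, Corollary 1.3.5], together with the compatibility with finite \'etale coverings of the functorial algorithms implicit in those proofs. Two of your variants are genuinely different and legitimate.

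In the NF case of (i), Neukirch--Uchida followed by Mochizuki's relative Grothendieck conjecture over sub-$p$-adic fields is the classical argument and needs no absolute $p$-adic input. You should add a line explaining why two GDs cannot have base fields of different kinds; for instance, $\Pi/\Delta$ is topologically finitely generated exactly in the MLF case.

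For (ii), your fallback is to fix any GD and transport all structure along the unique isomorphisms of (i). This is cleaner than appealing to ``algorithms implicit in the proofs'', and it gives exactly the functoriality with respect to isomorphisms that the statement asks for. The price is that it is a bi-anabelian rather than an algorithmic construction, and it does not by itself give the compatibility with coverings that the paper later uses in \cref{anabelian_CAVC_more}. To obtain honest (not merely outer) $\Pi$-actions on $\tilde{K}(\Delta\subset\Pi)$ this way, work with GDs equipped with an FFUC and an actual isomorphism $\psi$ of its Galois group with $\Pi$. The comparison isomorphism between two such objects is then unique because $\Pi$ is slim. The action of $\gamma\in\Pi$ is the comparison isomorphism between $\psi$ and $c_\gamma\circ\psi$, where $c_\gamma$ denotes conjugation by $\gamma$.

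Your MLF discussion, however, needs correcting.
\begin{itemize}
\item The MLF half of (i) is precisely the absolute Grothendieck conjecture for arbitrary hyperbolic curves over MLFs, i.e.\ [\cite{RNSPM}, Theorem D]. It does not follow from the reconstruction results of [\cite{MZK3}] for a general curve. Your sentence ``It uses the fact that an isomorphism preserving the geometric subgroups induces an isomorphism of the curves'' assumes the conclusion, so [\cite{RNSPM}, Theorem D] must be cited explicitly at that point.
\item $\base(\Delta\subset\Pi)$ cannot be reconstructed from $\Pi/\Delta$ in the MLF case. $G_k$ does not determine $k$: there are non-isomorphic $p$-adic fields with isomorphic absolute Galois groups, and automorphisms of $G_k$ that are not field-theoretic. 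Kummer theory on $G_k$ recovers only multiplicative data. The additive structure must be extracted from $\Pi$ via the curve, which is again the content of [\cite{RNSPM}, Theorem D].
\item Your ``main obstacle'' is misplaced. The delicate case is the MLF one, since the Belyi-type algorithms you propose to import apply only to special classes of curves (such as those descending to a number field), not to an arbitrary hyperbolic curve over an MLF. In the NF case, your own argument for (i) already supplies everything the gluing construction needs.
\end{itemize}
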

\begin{proof}
    Assertion (i) follows immediately from [\cite{RNSPM}, Theorem D; \cite{MZK6}, Corollary 1.3.5]. 
    Assertion (ii) follows immediately from [\cite{RNSPM}, Theorem D; \cite{MZK6}, Corollary 1.3.5], together with  
    the functoriality with respect to finite \'etale coverings of the functorial algorithms implicit in the proofs of these results.
\end{proof}

\begin{coro}\label{anabelian_CAVC_more}
    Let $G_{\Delta}\subset G_{\Pi}$, $G_{\Delta}'\subset G_{\Pi}'$ be inclusions of gen-CAVC-type such that there exists a GD $(X,\phi)$ of $G_{\Delta}\subset G_{\Pi}$. Then the following assertions hold:

    (i) Suppose that there exists a morphism $f:G_{\Pi}'\to G_{\Pi}$ of cov-type. Then $G_{\Delta}'\subset G_{\Pi}'$ is constructible.

    (ii) Suppose that there exists a morphism $f:G_{\Pi}\to G_{\Pi}'$ of decusp-type. Then $G_{\Delta}'\subset G_{\Pi}'$ is constructible.

    (iii) Suppose that there exists a morphism $f:G_{\Pi}\to G_{\Pi}'$ of geo-iso-type. Then $G_{\Delta}'\subset G_{\Pi}'$ is constructible.

    (iv) In the situation of (i), let $(X',\phi')$ be a GD of $G_{\Delta}'\subset G_{\Pi}'$. Then there exists a unique morphism of schemes $h:X'\to X$ such that $h_{\ast} = \phi^{-1}\circ f\circ \phi'$ (up to inner automorphism). We shall say that $h$ is the morphism (of GDs) induced by $f$.

    (v) In the situation of (ii), let $(X',\phi')$ be a GD of $G_{\Delta}'\subset G_{\Pi}'$. Then there exists a unique morphism of schemes $h:X\to X'$ such that $h_{\ast} = (\phi')^{-1}\circ f\circ \phi$ (up to inner automorphism). We shall say that $h$ is the morphism (of GDs) induced by $f$.

    (vi) In the situation of (iv), the morphism of canonical GDs induced by $f$ can be group-theoretically reconstructed from $f$.

    (vii) In the situation of (v), the morphism of canonical GDs induced by $f$ can be group-theoretically reconstructed from $f$.
\end{coro}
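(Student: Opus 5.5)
The plan is to reduce each assertion to classical facts about \'etale fundamental groups of hyperbolic curves over AFs, via the GD $(X,\phi)$ and \cref{anabelian_CAVC}.

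\textbf{Assertions (i) and (iv).} Let $f:G_{\Pi}'\to G_{\Pi}$ be of cov-type. Then $\phi^{-1}(f(G_{\Pi}'))$ is an open subgroup of $\Pi_{X}$, so it corresponds to a finite \'etale covering $X'\to X$. Here $X'$ is a hyperbolic curve over the finite extension $k'$ of $k$ determined by the image of this subgroup in $G_{k}$, and $k'$ is again an AF. Since $f$ is a morphism of inclusions, we have $f^{-1}(G_{\Delta})=G_{\Delta}'$. Hence the composite $\phi^{-1}\circ f$ identifies $G_{\Pi}'$ with $\Pi_{X'}$ and carries $G_{\Delta}'$ onto $\Pi_{X'}\cap\Delta_{X}=\Delta_{X'}$. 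This gives a GD of $G_{\Delta}'\subset G_{\Pi}'$, which proves (i).

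For (iv), take any other GD $(X',\phi')$. By \cref{anabelian_CAVC}(i), it differs from the one just constructed by a unique isomorphism. Composing that isomorphism with the covering $X'\to X$ gives $h$. Uniqueness of $h$ follows from the Hom-version of the relative anabelian theorem over AFs (Mochizuki for MLFs, Tamagawa--Mochizuki for NFs, as packaged in [\cite{RNSPM}, Theorem D; \cite{MZK6}]): two dominant morphisms inducing the same outer homomorphism on $\Pi$'s coincide.

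\textbf{Assertions (ii), (iii) and (v).} Let $f:G_{\Pi}\to G_{\Pi}'$ be of decusp-type with kernel class $S$ (cf. \cref{well_defined_for_kernel_cusp}). Transported by $\phi$, $S$ becomes a $\Pi_{X}$-stable set of cuspidal inertia classes of $X$. Stability under $\Pi_{X}$ means the corresponding cusps form a $G_{k}$-stable set of geometric cusps, hence a reduced divisor $D$ on $X^{\cl}$ defined over $k$. Set $X'\coloneqq X\cup D\subset X^{\cl}$. Because $S$ is of admissible type, $X'$ is either hyperbolic or of type $(g,0)$ with $g\geq 2$. The open immersion $X\injto X'$ induces a surjection $\Pi_{X}\surjto\Pi_{X'}$ whose kernel is the closed normal subgroup generated by the inertia groups in $S$. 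This is the same kernel as that of $f$, so $f$ identifies $\Pi_{X'}$ with $G_{\Pi}'$, carrying $\Delta_{X'}$ onto $G_{\Delta}'$ since $f(G_{\Delta})=G_{\Delta}'$. Thus $G_{\Delta}'\subset G_{\Pi}'$ is constructible, proving (ii).

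For (iii), by \cref{abs_nonsense_CAVC_morphism}(ii) a geo-iso-type morphism is of cov-type. Its image $f(G_{\Pi})$ is then an open subgroup of $G_{\Pi}'$ containing $G_{\Delta}'$. I expect this step to be the main obstacle, since one must produce a curve over the base field $k'$ of $G_{\Pi}'$ rather than over $k$ (descent of $X$ along a finite extension $k/k'$). My first approach is to take a VD $(\mathcal{C}',\phi'')$ of $G_{\Delta}'\subset G_{\Pi}'$ and apply \cref{field_ext_curve_theoretic}: the inclusion $G_{\Delta}\subset G_{\Pi}$ then arises from $\mathcal{C}'\times_{k'}k$. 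Since $G_{\Delta}\subset G_{\Pi}$ is constructible, the section of $\mathcal{C}'\times_{k'}k$ is geometric; to get a GD for $G_{\Delta}'\subset G_{\Pi}'$ I would need this to force the section of $\mathcal{C}'$ itself to be geometric. The cleaner alternative is to take the canonical GD of $G_{\Delta}\subset G_{\Pi}$, on which $G_{\Pi}'/G_{\Delta}'$ acts by functoriality (\cref{anabelian_CAVC}(ii)), and form the quotient scheme, which should be the desired curve over $k'$.

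For (v), the map $h$ is $X\injto X'$ composed with the canonical isomorphism of GDs. Uniqueness again comes from the Hom-theorem, applied after restricting to $X$, which is dense in $X'$.

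\textbf{Assertions (vi) and (vii).} These follow from the functoriality of the algorithms in \cref{anabelian_CAVC}(ii). The morphism of canonical GDs induced by $f$ is recovered by restricting the FFUC $\tilde{K}$ along $f$. In the cov-type case, $\tilde{K}(G_{\Delta}\subset G_{\Pi})$ is identified with $\tilde{K}(G_{\Delta}'\subset G_{\Pi}')$, and the function field inclusion is given by taking invariants under the two subgroups. In the decusp-type case, one uses the inclusion of invariants under $\ker f$.
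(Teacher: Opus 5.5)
Your arguments for (i), (ii) and (iv)--(vii) follow the paper's proof: the covering attached to $\phi^{-1}(f(G'_{\Pi}))$, the partial compactification of $X$ cut out by the kernel class, and \cref{anabelian_CAVC}(i)(ii) together with functoriality. The gap is (iii), which you leave unresolved, and this is the only part of the paper's proof with real content. Your first route does not work. Constructibility of $G_{\Delta}\subset G_{\Pi}$ only says that \emph{some} hyperbolic curve realizes the inclusion. It does not say that the section of the particular virtual curve $\mathcal{C}'\times_{k'}k$ is geometric. Moreover, descending geometricity of sections of virtual curves over a base $Y$ that need not be a curve is not available: \cref{devissage_of_section} concerns curves only.

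Your second route is the paper's route, but the steps that make it work are missing.

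First, you must use (i) to reduce to the case where $f(G_{\Pi})$ is normal in $G'_{\Pi}$, by passing to the normal core, which still contains $G'_{\Delta}$. Otherwise conjugation by $G'_{\Pi}$ does not act on $G_{\Pi}$, and there is no action to take a quotient by.

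Second, and crucially, write $I\coloneqq G_{\Pi}/G_{\Delta}$ and $J\coloneqq G'_{\Pi}/G'_{\Delta}$. The group $H\coloneqq J/I$ acts on $X$ by \cref{anabelian_CAVC}(i), and you must show that it acts \emph{faithfully on the constant field} $k$ of $X$. If some nontrivial element of $H$ acted $k$-linearly, you would be dividing by a geometric automorphism. Then $X\to X^{H}$ need not be \'etale. In any case the resulting geometric fundamental group would strictly contain the image of $\Delta_{X}$, whereas $f(G_{\Delta})=G'_{\Delta}$, so no GD of $G'_{\Delta}\subset G'_{\Pi}$ would result.

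The paper obtains faithfulness from the centralizer $C$ of $I$ in $J$. Slimness of $I$ gives $C\cap I=\{1\}$, so $C$ is a finite normal subgroup of $J$, hence trivial because $J$ is elastic. Now take an element of $J$ acting trivially on $k$. It agrees on $\bar{k}$ with an element of $I\simto\gal(\bar{k}/k)$. Their quotient acts trivially on $\bar{k}$, so it centralizes $I$ and is therefore trivial.

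Only then is $X\to X^{H}$ a finite Galois \'etale covering of a hyperbolic curve over $k^{H}$. Even then you still have to construct $\phi'\colon\Pi_{X^{H}}\simto G'_{\Pi}$ compatible with $f$. The paper does this by comparing the conjugation embeddings $\Pi_{X^{H}}\injto\Aut(\Pi_{X})$ and $G'_{\Pi}\injto\Aut(G_{\Pi})$. Your phrase ``should be the desired curve over $k'$'' skips exactly these verifications.
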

\begin{proof}
    First, we consider assertion (i). Let $H\coloneqq\phi^{-1}(f(G_{\Pi}'))\subset \Pi_{X}$ be the subgroup of $\Pi_{X}$ corresponding to $f(G_{\Pi}')$. Since $H$ is an open subgroup, there exists an \etale covering $h:Y\to X$ arising from $H$. Let $\phi':\Pi_{Y}\simto G_{\Pi}'$ be a group isomorphism such that $f\circ \phi' = \phi \circ h_{\ast}$. Then one verifies immediately that $(Y, \phi')$ is a GD of $G_{\Delta}'\subset G_{\Pi}'$. 
    Next, we consider assertion (ii). By [\cite{SZM}, Theorem 4.20(iii)], there exists a decuspidalization $h:X\to Y$ such that the kernel of the induced morphism $h_{\ast}$ of fundamental groups is topologically generated by the image of the kernel class of $f$ (cf. \cref{well_defined_for_kernel_cusp}) under $\phi^{-1}$. Let $\phi':\Pi_{Y}\simto G_{\Pi}'$ be a group isomorphism such that $\phi'\circ h_{\ast}=f\circ\phi$.  Then 
    one verifies immediately that $(Y, \phi')$ is a GD of $G_{\Delta}'\subset G_{\Pi}'$.
    Next, we consider assertion (iii). By assertion (i), we may assume, without loss of generality, that the image of $G_{\Pi}$ in $G_{\Pi}'$ under $f$ is normal. Let $I$ be the quotient $G_{\Pi}/G_{\Delta}$; $J$ the quotient $G_{\Pi'}/G_{\Delta'}$. By definition, $f$ induces an injection $f':I\injto J$. By \cref{anabelian_CAVC}(i), it follows that $H\coloneqq J/I$ acts on $X$ via $\phi^{-1}$. Let $C\subset J$ be the centralizer of $I$ in $J$. Since $I$ is slim (cf. [\cite{MZK3}, Theorem 1.7(ii)(iii)]), it follows that $C\cap I = \{ 1 \}$, hence that $C$ is finite. Since $C$ is normal, and $J$ is elastic (cf. [\cite{MZK3}, Theorem 1.7(ii)(iii)]), we thus conclude that $C= \{ 1 \}$. Let $k$ be the base field of $X$. Then since $C=\{ 1 \}$, it follows immediately that the induced action of $H$ on $k$ is faithful. Let $X^H$ be the scheme-theoretic quotient (cf. \cref{Notations and Terminologies}, the discussion entitled ``Fields, Schemes and Curves'', which is applicable in 
    light of well-known elementary properties of algebraic curves). Since the action of $H$ on $k$ is faithful, the natural morphism $h: X\to X^H$ is a Galois \etale covering. Moreover, by considering the natural injections $\Pi_{X^H}\injto \Aut(\Pi_{X})$, $G_{\Pi}'\injto \Aut(G_{\Pi})$ induced by the conjugation actions, one sees that there exists an isomorphism $\phi':\Pi_{X^H}\to G_{\Pi}'$ such that $f\circ\phi = \phi'\circ h_{\ast}$. In particular, one verifies immediately that $(X^H, \phi')$ is a GD of $G_{\Delta}'\subset G_{\Pi}'$.

    Assertions (iv) and (vi) follow immediately from \cref{anabelian_CAVC}(i)(ii) and the proof of assertion (i). Assertions (v) and (vii) follow immediately from \cref{anabelian_CAVC}(i)(ii) and the proof of assertion (ii).
\end{proof}

\begin{defi}\label{defi_trivial_family}\quad

    (i) Let $f:X \to Y$ be a family of hyperbolic curves such that $Y$ is a normal scheme of finite type over a field $k$ of characteristic zero. Then we say that $f$ is a \emph{split family of curves} if there exists a morphism $g:X \to Z$, where $Z$ is a smooth hyperbolic curve over $k$, such that the natural morphism
    \begin{displaymath}
        (f,g): X \longrightarrow Y \times_{k} Z
    \end{displaymath}
    is an isomorphism. In this situation, we shall say that $g$ is a \emph{splitting morphism} of $f$. 
    
    (ii) If $\mathcal{C} = (X,Y,f,k,\bar{k},t,s)$ is a pointed virtual curve, then we say that $\mathcal{C}$ is a 
    \emph{split pointed virtual curve} if $f$ is a split family of curves.
\end{defi}
\begin{coro}\label{constructible_trivial_family}
    Let $\mathcal{C} = (X,Y,f,k,\bar{k},t,s)$ be a split pointed virtual curve over an AF (cf. \cref{Notations and Terminologies}, the discussion entitled ``Fields, Schemes and Curves''; [\cite{SZM}, Proposition 5.2]) $k$; $g:X \to Z$ a splitting morphism of $f$. Then the inclusion
    \begin{displaymath}
        \Delta_{\mathcal{C}} \subset \Pi_{\mathcal{C}}
    \end{displaymath}
    is naturally isomorphic to the inclusion
    \begin{displaymath}
        \Delta_{Z} \subset \Pi_{Z}.
    \end{displaymath}
    Moreover, this natural isomorphism induces a natural bijection between $\cusp(\Pi_{\mathcal{C}})$ and $\cusp(Z)$ (cf. [\cite{SZM}, Definition 4.19(ii)]), and the inclusion $\Delta_{\mathcal{C}} \subset \Pi_{\mathcal{C}}$ is constructible.
\end{coro}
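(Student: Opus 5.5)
We first identify the inclusion $\Delta_{\mathcal{C}}\subset\Pi_{\mathcal{C}}$ with $\Delta_{Z}\subset\Pi_{Z}$, then read off the cusps, and finally get constructibility from \cref{anabelian_CAVC}.

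\textbf{Step 1: the fibre product.} Because $g$ is a splitting morphism, $(f,g):X\simto Y\times_{k}Z$. Since $k$ has characteristic zero and $Y$ is geometrically connected over $k$, the K\"unneth formula for \'etale fundamental groups gives a natural isomorphism
\begin{displaymath}
\Pi_{X}\simto \Pi_{Y}\times_{G_{k}}\Pi_{Z},
\end{displaymath}
defined up to inner automorphism. Under this isomorphism, $f_{\ast}$ becomes the first projection and $g_{\ast}$ the second.

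\textbf{Step 2: pulling back along $s$.} By definition (cf. [\cite{SZM}, Definition 3.5]), $\Pi_{\mathcal{C}}$ is the inverse image of $s(G_{k})\subset\Pi_{Y}$ under $f_{\ast}:\Pi_{X}\to\Pi_{Y}$. Hence
\begin{displaymath}
\Pi_{\mathcal{C}}\cong s(G_{k})\times_{G_{k}}\Pi_{Z}.
\end{displaymath}
Since $s$ is a section, $s(G_{k})\simto G_{k}$, so this fibre product is $G_{k}\times_{G_{k}}\Pi_{Z}\cong\Pi_{Z}$. Concretely, the isomorphism is the restriction of $g_{\ast}$ to $\Pi_{\mathcal{C}}$.

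\textbf{Step 3: matching the geometric subgroups.} The geometric virtual fundamental group $\Delta_{\mathcal{C}}$ is the kernel of $\Pi_{\mathcal{C}}\to G_{k}$. This map factors through the first factor $s(G_{k})\cong G_{k}$, and compatibility of the two projections over $G_{k}$ shows it is the composite $\Pi_{\mathcal{C}}\simto\Pi_{Z}\surjto G_{k}$. Therefore $\Delta_{\mathcal{C}}$ corresponds to $\Delta_{Z}$. In particular, $Z$ and the fibres of $f$ have the same type, and the inclusion is of gen-CAVC-type via any VD.

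\textbf{Step 4: cusps.} This is the main point requiring care. Cuspidal inertia groups of $\mathcal{C}$ are defined through the geometric cusps of the family $f$ (cf. [\cite{SZM}, Definition 4.19]). The compactification of $X\cong Y\times_{k}Z$ over $Y$ is $Y\times_{k}Z^{\cl}$, so the boundary divisor is $Y\times_{k}(Z^{\cl}\setminus Z)$. Hence the geometric cusps of the family, over finite \'etale coverings, are the pullbacks of geometric cusps of $Z$, and the corresponding inertia groups map isomorphically via $g_{\ast}$ onto cuspidal inertia groups of $\Delta_{Z}$. The cleanest route uses [\cite{SZM}, Theorem 4.20(ii)(iii)], as in the proof of \cref{Galois_actions_on_D}. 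There, $\cusp(\mathcal{C})$ is identified with $\cusp(X_{\xi})$ for the generic fibre $X_{\xi}=Z\times_{k}K(Y)$, and $\cusp(X_{\xi})=\cusp(Z)$ under $\Delta_{X_{\xi}}\simto\Delta_{Z}$, since cuspidal inertia groups are preserved under base field extension of algebraically closed fields in characteristic zero. This gives the bijection $\cusp(\Pi_{\mathcal{C}})\simto\cusp(Z)$.

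\textbf{Step 5: constructibility.} $Z$ is a hyperbolic curve over the AF $k$, and Step 3 gives an isomorphism $\phi:\Pi_{Z}\simto\Pi_{\mathcal{C}}$ with $\phi(\Delta_{Z})=\Delta_{\mathcal{C}}$. So $(Z,\phi)$ is a GD in the sense of \cref{defi_constructible_inclusion}(i), and the inclusion is constructible. Naturality of all the identifications follows from the uniqueness of $g$ up to the data fixed, together with \cref{anabelian_CAVC}(i).
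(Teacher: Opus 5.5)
Your argument is correct, and in substance it follows the paper's route. The paper's proof is a one-line reduction: it unwinds \cref{defi_trivial_family} to get $X\simto Y\times_{k}Z$ and then invokes [\cite{SZM}, Lemma 5.11(i)(ii)] for both the identification of $\Delta_{\mathcal{C}}\subset\Pi_{\mathcal{C}}$ with $\Delta_{Z}\subset\Pi_{Z}$ and the matching of cusps. The paper uses the same fact elsewhere in the form $\Pi_{[\pr^{1}_{X},s]}\simto\Pi_{X}$ via $(\pr^{2}_{X})_{\ast}$.

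What you do is re-derive the content of that cited lemma:
\begin{itemize}
\item the K\"unneth isomorphism $\Pi_{X}\simto\Pi_{Y}\times_{G_{k}}\Pi_{Z}$;
\item the description of $\Pi_{\mathcal{C}}$ as the preimage of $s(G_{k})$ under $f_{\ast}$, which gives $\Pi_{\mathcal{C}}\simto\Pi_{Z}$ via $g_{\ast}$ and matches the kernels over $G_{k}$;
\item the generic-fibre identification from the proof of \cref{Galois_actions_on_D} for the cusps.
\end{itemize}
This makes the proof self-contained and shows explicitly that the ``natural'' isomorphism is just $g_{\ast}|_{\Pi_{\mathcal{C}}}$, for an arbitrary split family. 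The constructibility step, exhibiting $(Z,\phi)$ as a GD, is identical in both.

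Two small points of precision.
\begin{itemize}
\item Constructibility in \cref{defi_constructible_inclusion} is only defined for inclusions of gen-CAVC-type. Since $Y$ is only assumed normal, a VD built from $\mathcal{C}$ itself need not exist. The clean way to get gen-CAVC-type is to transport it from $\Delta_{Z}\subset\Pi_{Z}$: view $Z$ as a pointed virtual curve over $\sp(k)$, and note that an AF is $l$-cyclotomically full. This is better than your phrase ``via any VD''.
\item The appeal to \cref{anabelian_CAVC}(i) for naturality is unnecessary. The splitting morphism $g$ is part of the given data, and $g_{\ast}$ is canonical up to inner automorphism.
\end{itemize}
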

\begin{proof}
    The assertion follows immediately from \cref{defi_trivial_family}(i)(ii) and [\cite{SZM}, Lemma 5.11(i)(ii)].
\end{proof}

\begin{prop}\label{virtual_inertia_and_real_inertia}
    Let $\mathcal{C} = (X,Y,f,k,\bar{k},t,s)$ be a pointed virtual curve 
    such that $k$ is of characteristic zero. Suppose that $k$ is $l$-cyclotomically full for some prime number $l$.
    Write $\cusp(f)\coloneqq \cusp(f,\Pi_{X})$ (cf. \cref{Notations and Terminologies}, the discussion entitled ``Fundamental Groups''). Note that the kernel of $f_{\ast}:\Pi_{X}\to \Pi_{Y}$, regarded as a closed subgroup of $\Pi_{X}$, is equal to $\Delta_{\mathcal{C}}$. Therefore, one may regard $\cusp(f)$ as a set of conjugacy classes of subgroups of $\Delta_{\mathcal{C}}$. Then $\cusp(f) = \cusp(\mathcal{C})$.
\end{prop}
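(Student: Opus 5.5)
The plan is to compare both sets through the generic fiber, following the argument already sketched in the proof of \cref{Galois_actions_on_D}. Let $\xi$ be the generic point of $Y$ and $\bar{\xi}$ a geometric point over $\xi$. Write $X_{\xi}\coloneqq X\times_{Y}\xi$, $X_{\bar{\xi}}\coloneqq X\times_{Y}\bar{\xi}$, and let $Z\to Y$ be the unique smooth compactification of $X$ over $Y$ with boundary divisor $D\coloneqq Z\setminus X$. By [\cite{SZM}, Proposition 3.7], the natural map $\Pi_{X_{\bar{\xi}}}\to\Delta_{\mathcal{C}}$ is an isomorphism; this uses the $l$-cyclotomic fullness hypothesis and the fact that $Y$ is normal. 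So both $\cusp(f)$ and $\cusp(\mathcal{C})$ may be regarded as sets of conjugacy classes of subgroups of $\Delta_{X_{\xi}}=\Pi_{X_{\bar{\xi}}}$. The target is then to show that each of them coincides with the classical set $\cusp(X_{\xi})$ of conjugacy classes of cuspidal inertia subgroups of the hyperbolic curve $X_{\xi}$ over the field $K(Y)$.

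\textbf{Step 1: $\cusp(f)=\cusp(X_{\xi})$.} The set $\tilde{D}$ of geometric cusps attached to $f$ is the inverse limit of the sets of irreducible components of the preimages of $D$ in compactifications of finite étale covers of $X$. Because $D$ is finite étale over $Y$ and every cover is normal, these irreducible components correspond bijectively and compatibly to the closed points of the boundaries of the generic fibers, that is, to the cusps of the corresponding covers of $X_{\xi}$. Covers of $X_{\xi}$ that come from $X$ are cofinal among covers of $X_{\bar{\xi}}$, by the isomorphism of the first paragraph. Next I compare stabilizers. The decomposition group of $x\in\tilde{D}$ in $\Pi_{X}$ is the image of the decomposition group of the corresponding cusp in $\Pi_{X_{\xi}}$, possibly enlarged by $\Pi_{Y}$-translates. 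Intersecting with the kernel of $\Pi_X\to\Pi_Y$, which is $\Delta_{\mathcal{C}}$, gives exactly the image of the geometric inertia subgroup of $X_{\bar{\xi}}$. Taking $\Delta$-conjugacy classes then yields $\cusp(f)=\cusp(X_{\xi})$.

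\textbf{Step 2: $\cusp(\mathcal{C})=\cusp(X_{\xi})$.} Here I would invoke [\cite{SZM}, Theorem 4.20(ii)(iii)], as in the proof of \cref{Galois_actions_on_D}, which says the argument there carries over. That theorem shows $\cusp(\mathcal{C})$ is group-theoretically determined by $\Delta_{\mathcal{C}}\subset\Pi_{\mathcal{C}}$ and is compatible with étale coverings. Using \cref{field_ext_curve_theoretic}, \cref{covering_curve_theoretic} and \cref{decuspidalization_curve_theoretic}, I would reduce to the case where $s$ arises from a rational point, or where the family is split as in \cref{constructible_trivial_family}. In that case $\cusp(\mathcal{C})$ is by definition the set of cusps of an honest curve, whose geometric fundamental group is identified with $\Pi_{X_{\bar{\xi}}}$ by specializing along the section. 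The cuspidal inertia subgroups of a hyperbolic curve are preserved under such specialization isomorphisms of the geometric fundamental group, for example because they are characterized group-theoretically in $\Delta$ by [\cite{MZK3}]-type results.

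\textbf{Main obstacle.} The delicate part is Step 2. I must check that the identification $\Delta_{\mathcal{C}}\cong\Pi_{X_{\bar{\xi}}}$ used in the definition of $\cusp(\mathcal{C})$ in [\cite{SZM}, Definition 4.19] agrees, up to conjugacy, with the one coming from the generic fiber. The first thing I would try is the group-theoretic characterization of cuspidal inertia, [\cite{SZM}, Theorem 4.20(ii)], which makes $\cusp(\mathcal{C})$ independent of the choice. Once that is settled, the two sets coincide inside $\Delta_{\mathcal{C}}$ by Steps 1 and 2.
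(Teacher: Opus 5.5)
Your Step 2 already contains the paper's entire proof. The paper uses [\cite{SZM}, Theorem 4.20(ii)] to reduce to the case where $s$ arises from a point $y\in Y(k)$. Then [\cite{SZM}, Theorem 4.20(iii)] identifies $\cusp(\mathcal{C})$ with the cuspidal classes of the fibre $X_{y}$ under $\Delta_{X_{y}}\simto\Delta_{\mathcal{C}}$, and these are the classes in $\cusp(f)$.

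The reduction comes from [\cite{SZM}, Theorem 4.20(ii)], not from \cref{field_ext_curve_theoretic}, \cref{covering_curve_theoretic} or \cref{decuspidalization_curve_theoretic}. None of those lemmas replaces $s$ by another section. Passing to a finite extension $k'$ with $Y(k')\neq\emptyset$ does not make $s|_{G_{k'}}$ come from a point, and a general family cannot be traded for a split one. On the other side nothing is needed: $\cusp(f)$ does not involve $s$ and is unchanged by finite extensions of $k$.

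The genuine error is in your comparison of $\cusp(X_{y})$ with $\cusp(X_{\xi})$. You justify it by saying that cuspidal inertia subgroups are ``characterized group-theoretically in $\Delta$''. They are not: for $r\geq 1$ the group $\Delta$ is free profinite, and its automorphisms need not respect cusps. For example, take type $(0,3)$ with $\Delta=\hat{F}(a,b)$, whose cuspidal classes are those of $\overline{\langle a\rangle}$, $\overline{\langle b\rangle}$ and $\overline{\langle ab\rangle}$. The automorphism $a\mapsto a$, $b\mapsto ba^{2}$ carries $\overline{\langle b\rangle}$ to a subgroup conjugate to none of these. Indeed, its generator has image $2a+b$ in $\Delta^{\ab}$, which is not of the form $\lambda a$, $\lambda b$ or $\lambda(a+b)$. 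The anabelian characterizations you allude to also use the outer Galois action. You have not related the action of $G_{k}$ (through $y$) to that of $G_{K(Y)}$ across the specialization.

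The right justification is the geometric one from your Step 1. The divisor $D=Z\setminus X$ is finite \'etale over $Y$, and \'etale-locally along $D$ the compactification $Z$ looks like $\mathbb{A}^{1}_{Y}$ with $D$ the zero section. Hence, under $\Delta_{X_{y}}\simto\Delta_{\mathcal{C}}$, the cuspidal inertia subgroups of the fibre correspond exactly to the members of $I(f,\Pi_{X})$.

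That argument, run at $y$ rather than at $\xi$, gives $\cusp(X_{y})=\cusp(f)$ directly. Combined with the reduction, it yields $\cusp(\mathcal{C})=\cusp(X_{y})=\cusp(f)$. So the generic fibre, your Step 1, and the ``main obstacle'' about [\cite{SZM}, Definition 4.19] all become unnecessary.
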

\begin{proof}
    By [\cite{SZM}, Theorem 4.20(ii)], without loss of generality, one may assume that $s$ arises from a rational point of $Y$. The assertion then follows immediately from [\cite{SZM}, Theorem 4.20(iii)].
\end{proof}


\section{Virtual Decuspidaloids}\label{section_virtual_decuspidaloid}

In this section, we introduce the notion of a \emph{virtual decuspidaloid} and study its properties, as well as various associated anabelian constructions.

\begin{defi}
    Denote by $\ig$ the category whose objects are inclusions $H\subset G$ of profinite groups, and whose morphisms are morphisms of inclusions (cf. \cref{Notations and Terminologies}, the discussion entitled ``Groups and Topologies'').
\end{defi}

\begin{lemm}\label{transitive_cusp}
    Let $G_{\Delta}\subset G_{\Pi}$ be an inclusion of CAVC-type. Let $H$, $H'$ be two open subgroups of $G_{\Pi}$ such that $H'\subset H$. Denote by $S$ (respectively, $S'$) the set $\cusp(H\cap G_{\Delta}\subset H)$ (respectively, $\cusp(H'\cap G_{\Delta}\subset H')$) (cf. \cref{defi_cusp_33}; \cref{covering_group_theoretic}) and by $f: S\to\cusp(G_{\Delta}\subset G_{\Pi})$ (respectively, $f': S'\to\cusp(G_{\Delta}\subset G_{\Pi})$, $f'':S'\to S$) the induced morphism of sets of conjugacy classes of $G_{\Delta}$, $H$ (respectively, $G_{\Delta}$, $H'$; $H$, $H'$) (cf. \cref{defi_cusp_compatibility_subgroup}). Then $f'=f\circ f''$.
\end{lemm}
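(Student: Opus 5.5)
The plan is to unwind \cref{defi_cusp_compatibility_subgroup}: all three maps $f$, $f'$, $f''$ are computed from explicit representatives, and the identity reduces to a statement about commensurators of individual cuspidal inertia subgroups. Write $\Delta_H\coloneqq H\cap G_{\Delta}$ and $\Delta_{H'}\coloneqq H'\cap G_{\Delta}$. By \cref{covering_group_theoretic}, applied to the cov-type inclusions $H'\injto H\injto G_{\Pi}$, the set $S'$ is the set of conjugacy classes induced by $S$ via $\Delta_{H'}\injto\Delta_H$. Likewise $S'$ is induced by $\cusp(G_{\Delta}\subset G_{\Pi})$ via $\Delta_{H'}\injto G_{\Delta}$, and $S$ is induced by $\cusp(G_{\Delta}\subset G_{\Pi})$ via $\Delta_H\injto G_{\Delta}$. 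Hence every element of $S'$ has a representative of the form $I\cap\Delta_{H'}$, where $I\subset G_{\Delta}$ represents some $\alpha\in\cusp(G_{\Delta}\subset G_{\Pi})$.

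First I will check consistency of representatives. If $I$ represents $\alpha$, then $J\coloneqq I\cap\Delta_H$ represents an element of $S$. Moreover,
\begin{displaymath}
J\cap\Delta_{H'} = I\cap\Delta_{H'}.
\end{displaymath}
So the class $\beta'\in S'$ of $I\cap\Delta_{H'}$ is also obtained from the representative $J$ of $f''(\beta')$. The maps are defined by ``considering the commensurators''. That is, $f'(\beta')$ is the class of the unique cuspidal inertia subgroup of $G_{\Delta}$ commensurable with $I\cap\Delta_{H'}$, and $f''(\beta')$ is the class of the unique cuspidal subgroup of $\Delta_H$ commensurable with $I\cap\Delta_{H'}$. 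Since $I\cap\Delta_{H'}$ is open in $J$, this gives $f''(\beta')=[J]$. Similarly $I\cap\Delta_{H'}$ and $J$ are both open in $I$, so $f([J])=[I]=f'(\beta')$.

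The main obstacle is well-definedness: I must know that a cuspidal inertia subgroup of $G_{\Delta}$ commensurable with a given open subgroup of some cuspidal inertia subgroup is unique up to conjugacy, and indeed equals the original one. I plan to use commensurable terminality (implicit in \cref{defi_cusp_compatibility_subgroup}). After choosing a VD via \cref{covering_curve_theoretic} and reducing to an honest hyperbolic curve as in the proof of \cref{covering_group_theoretic} ([\cite{SZM}, Theorem 4.20(ii)(iii)]), this is the standard fact that cuspidal inertia groups in geometric fundamental groups of hyperbolic curves are commensurably terminal, and that distinct ones have trivial intersection. It follows that if two cuspidal inertia subgroups $I_1,I_2$ both contain a common open subgroup $U$, then $I_1=I_2$.

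With this in hand, the commensurator of $I\cap\Delta_{H'}$ in $G_{\Delta}$ is exactly $I$, and its commensurator in $\Delta_H$ is exactly $J=I\cap\Delta_H$. Therefore the computation above is independent of choices, and $f'=f\circ f''$ follows.
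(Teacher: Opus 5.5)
Your proposal is correct and follows the paper's approach: the paper simply says the lemma follows immediately from the construction in \cref{defi_cusp_compatibility_subgroup}. Your argument is an explicit unwinding of that construction, namely the choice $J = I\cap\Delta_H$ with $J\cap\Delta_{H'} = I\cap\Delta_{H'}$, together with a justification of well-definedness via commensurable terminality of cuspidal inertia subgroups (which the definition presupposes).
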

\begin{proof}
    \cref{transitive_cusp} follows immediately from the construction in \cref{defi_cusp_compatibility_subgroup}.
\end{proof}

\begin{defi}\label{defi_virtual_decuspidaloid}
    Let $G_{\Delta}\subset G_{\Pi}$ be an inclusion of CAVC-type. Then we define a category $\cavc_{G_{\Delta}\subset G_{\Pi}}$ in the following way:

    (i) The objects are tuples $(H,S)$, where $H$ is an open subgroup of $G_{\Pi}$ and $S$ is a subset of admissible-type of $\cusp(H\cap G_{\Delta}\subset H)$ (cf. \cref{defi_admissible_type}; \cref{covering_group_theoretic}).

    (ii) For two objects $(H,S)$ and $(H',S')$, define the set of morphisms $\hom((H,S),(H',S'))$ as follows:
    \begin{enumerate}
        \item a singleton set, if $H\subset H'$, and $S$ is contained in the set of conjugacy classes induced by $S'$ under the inclusion $H\cap G_{\Delta} \subset H'\cap G_{\Delta}$ (cf. \cref{defi_cusp_compatibility_subgroup});
        \item the empty set, otherwise.
    \end{enumerate}

    (iii) Let $(H,S)$, $(H',S')$, and $(H'',S'')$ be three objects. Assume further that there exist morphisms $f:(H,S)\to(H',S')$, $f':(H',S')\to (H'',S'')$. By \cref{transitive_cusp}, there exists a morphism $f''$ from $(H,S)$ to $(H'',S'')$. As there is at most one morphism between any two objects, one defines $f''\coloneqq f'\circ f$.
\end{defi}

\begin{defi}\label{CAVC_functor_defi}
    Let $G_{\Delta}\subset G_{\Pi}$ be an inclusion of CAVC-type. Then we define a functor
    \begin{displaymath}
        F_{G_{\Delta}\subset G_{\Pi}}: \cavc_{G_{\Delta}\subset G_{\Pi}}\to \ig
    \end{displaymath}
    by the following steps:
    \begin{enumerate}
        \item For an object $(H,S)\in \obj(\cavc_{G_{\Delta}\subset G_{\Pi}})$, associate to it the inclusion $((H\cap G_{\Delta})/S)\subset H/S$ (cf. \cref{Notations and Terminologies}, the discussion entitled ``Groups and Topologies'').

        \item For each morphism $(H,S)\to (H',S')$ of $\cavc_{G_{\Delta}\subset G_{\Pi}}$, associate to it the natural morphism from $H/S$ to $H'/S'$, regarded as subquotients of $G_{\Pi}$.  [One verifies easily that this morphism of inclusions is 
        well-defined.]
    \end{enumerate}
    We refer to the pair $(\cavc_{G_{\Delta}\subset G_{\Pi}},F_{G_{\Delta}\subset G_{\Pi}})$ as the \emph{virtual decuspidaloid} associated to $G_{\Delta}\subset G_{\Pi}$. Moreover, if $\mathcal{C}$ is a pointed virtual curve such that $\Delta_{\mathcal{C}}\subset \Pi_{\mathcal{C}}$ is of CAVC-type, then we also refer to the data $(\cavc_{\Delta_{\mathcal{C}}\subset \Pi_{\mathcal{C}}},F_{\Delta_{\mathcal{C}}\subset \Pi_{\mathcal{C}}})$ as the \emph{virtual decuspidaloid} associated to $\mathcal{C}$.
\end{defi}

\begin{theo}\label{construction_CAVC}
    Let $G_{\Delta}\subset G_{\Pi}$ be an inclusion of CAVC-type. Then the functor $F_{G_{\Delta}\subset G_{\Pi}}$ (cf. \cref{CAVC_functor_defi}) can be reconstructed group-theoretically and functorially from the abstract inclusion $G_{\Delta}\subset G_{\Pi}$.
\end{theo}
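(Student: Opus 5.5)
The plan is to unwind \cref{CAVC_functor_defi} and check that every ingredient in the definition of $F_{G_{\Delta}\subset G_{\Pi}}$ is determined by the abstract inclusion $G_{\Delta}\subset G_{\Pi}$, in a way compatible with isomorphisms of inclusions. The crucial input is that $\cusp(G_{\Delta}\subset G_{\Pi})$ is group-theoretic: by \cref{CAVC_universal_prop}, which rests on [\cite{SZM}, Proposition 4.18(i); Theorem 4.20(ii)], the type $(g,r)$ and the set $\cusp(G_{\Delta}\subset G_{\Pi})$ can be reconstructed from $G_{\Delta}\subset G_{\Pi}$, and they do not depend on the choice of VD. In particular, any isomorphism of inclusions $\alpha:(G_{\Delta}\subset G_{\Pi})\simto(G'_{\Delta}\subset G'_{\Pi})$ carries $\cusp(G_{\Pi})$ bijectively onto $\cusp(G'_{\Pi})$. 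To see this, compose a VD $(\mathcal{C}',\phi')$ of the target with $\alpha$; this gives a VD $(\mathcal{C}',\phi'\circ\alpha)$ of the source, and then \cref{CAVC_universal_prop} applies.

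Next I reconstruct the objects of $\cavc_{G_{\Delta}\subset G_{\Pi}}$. Open subgroups $H\subset G_{\Pi}$ are group-theoretic. By \cref{covering_group_theoretic}, $H\cap G_{\Delta}\subset H$ is of CAVC-type, and $\cusp(H)$ is the set of conjugacy classes induced from $\cusp(G_{\Pi})$ via $H\cap G_{\Delta}\injto G_{\Delta}$ (cf. \cref{defi_cusp_compatibility_subgroup}). This is an explicit group-theoretic operation: intersect each representative with $H$ and take $H\cap G_{\Delta}$-conjugacy classes. The type of $H\cap G_{\Delta}\subset H$ comes either from \cref{CAVC_universal_prop} applied to this inclusion, or directly from $\card(\cusp(H))$ together with the genus read off from the ranks of abelianizations. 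Whether a subset $S\subset\cusp(H)$ is of admissible-type (\cref{defi_admissible_type}) depends only on three things: stability under $H$-conjugation, $g$, and $\card(S)$ versus $r$. All of these are group-theoretic.

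For the morphisms, the condition ``$H\subset H'$ and $S$ is contained in the set induced by $S'$'' is again a direct group-theoretic statement by \cref{defi_cusp_compatibility_subgroup}, and composition is well defined by \cref{transitive_cusp}. The values of the functor are handled the same way. The quotient $H/S$ is the quotient by the closed normal subgroup generated by the representatives of the classes in $S$, and $(H\cap G_{\Delta})/S$ is the image of $H\cap G_{\Delta}$ in it. The morphisms are the natural maps of subquotients of $G_{\Pi}$; they are well defined because representatives of classes in $S$ are contained in representatives of classes in $S'$, up to conjugation.

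Finally I verify functoriality. Given an isomorphism $\alpha$ as above, define $(H,S)\mapsto(\alpha(H),\alpha(S))$. By the first step, $\alpha$ commutes with forming cusp sets, so this map preserves admissibility and the morphism conditions, giving an isomorphism of categories. The induced isomorphisms $H/S\simto\alpha(H)/\alpha(S)$ then form a natural isomorphism $F_{G_{\Delta}\subset G_{\Pi}}\Rightarrow F_{G'_{\Delta}\subset G'_{\Pi}}\circ(\text{induced functor})$, which is an isomorphism of $\ig$-structured categories in the sense of \cref{Notations and Terminologies}. Compatibility with composition of isomorphisms is immediate because everything is defined by transport of structure.

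The main obstacle is the compatibility between the cusps of $H$ computed via a VD of $H\cap G_{\Delta}\subset H$, which is how \cref{defi_cusp_33} defines them, and the cusps obtained by restricting $\cusp(G_{\Pi})$. Without this, the morphism sets would not be intrinsic. I would settle it by citing \cref{covering_group_theoretic}, whose proof reduces via \cref{field_ext_curve_theoretic} and [\cite{SZM}, Theorem 4.20(ii)(iii)] to the case of a rational-point section, where it is the classical behaviour of cuspidal inertia under finite étale coverings.
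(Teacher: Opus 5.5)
Your proposal is correct and follows the same route as the paper. The paper's proof is the single remark that the result follows immediately from \cref{defi_virtual_decuspidaloid}, \cref{CAVC_functor_defi}, and the group-theoretic reconstructibility of cusps in [\cite{SZM}, Theorem 4.20(ii)]; you have unwound those definitions explicitly, with the cusp reconstruction entering through \cref{CAVC_universal_prop} and \cref{covering_group_theoretic}, and with transport of structure giving the functoriality.
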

\begin{proof}
    \cref{construction_CAVC} follows immediately from \cref{defi_virtual_decuspidaloid}, \cref{CAVC_functor_defi}, and [\cite{SZM}, Theorem 4.20(ii)].
\end{proof}

\begin{lemm}\label{open_unique}
    In the situation of \cref{CAVC_functor_defi}, suppose that the naturally induced morphism from $H/S$ to $H'/S'$ discussed in procedure $(2)$ is injective. Then every representative $I$ of an element of $S'$ is a subgroup of $H$.
\end{lemm}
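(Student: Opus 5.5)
The plan is to reduce to curves, where the statement says that a cusp of $S'$ does not ramify in the cover attached to $H$. We show this by comparing two open subschemes of a single curve.

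Write $N'\subset H'$ for the closed normal subgroup generated by the representatives of $S'$, and $N\subset H$ for the closed normal subgroup generated by the representatives of $S$. Since $S$ is contained in the set of conjugacy classes induced by $S'$ (\cref{defi_cusp_compatibility_subgroup}), we have $N\subset H\cap N'$. The natural map $H/S\to H'/S'$ has kernel $(H\cap N')/N$, so the injectivity hypothesis says exactly that $H\cap N'=N$. It therefore suffices to prove $N'\subset H$; in fact we show $H=HN'$.

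The reduction to curves is as follows.

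\begin{enumerate}
\item By \cref{decuspidalization_curve_theoretic} and \cref{decuspidalization-group-theoretic}, choose a VD $(\mathcal{C}',\phi')$ of $H'\cap G_{\Delta}\subset H'$ together with a decuspidalization $\mathcal{C}'\to\mathcal{C}'^{S'}$ determining $S'$, so that $\Pi_{\mathcal{C}'^{S'}}\cong H'/S'$.
\item Pass to the generic fibre over the base $Y$ of $\mathcal{C}'$. By [\cite{SZM}, Proposition 3.7] and \cref{virtual_inertia_and_real_inertia}, all geometric fundamental groups and cuspidal inertia groups may be read off from honest hyperbolic curves over the function field $K(Y)$. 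Hence we may argue with a hyperbolic curve $X'$ over a field of characteristic zero, its partial compactification $X'^{S'}$ obtained by filling in the cusps of $S'$, and finite étale covers of these.
\item The open subgroup $HN'/N'\subset H'/S'$ corresponds to a finite étale cover $Z\to X'^{S'}$. Its pullback to $X'$ is the cover $X_{HN'}$ with group $HN'$, i.e.
\begin{displaymath}
X_{HN'}=Z\setminus(\text{preimage of the cusps in } S').
\end{displaymath}
\item Since $H/S\simto HN'/N'$ is an isomorphism compatible with geometric parts, the anabelian uniqueness (\cref{anabelian_CAVC}(i), applied after passing to a constructible situation as in \cref{covering_group_theoretic}, or simply the Galois correspondence over the fixed curve $X'^{S'}$) identifies $Z$ with the decuspidalization $X_H^{S}$ of the cover $X_H$ corresponding to $H$. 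Then
\begin{displaymath}
X_H=Z\setminus(\text{points corresponding to } S).
\end{displaymath}
\end{enumerate}

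Now we compare the two open subschemes of $Z$. Because every class of $S$ is induced by a class of $S'$, the removed points of $S$ lie among the preimages of $S'$. Hence $X_{HN'}\subset X_H$ as open subschemes of $Z$. On the other hand, $H\subset HN'$ gives a finite étale cover $X_H\to X_{HN'}$ of degree $[HN':H]$, and this cover is compatible with the maps to $Z$. A finite étale map from $X_H$ onto its own open subscheme $X_{HN'}$, commuting with the inclusions into the curve $Z$, must be birational. Hence it is an isomorphism of degree $1$, so $HN'=H$. Therefore every representative $I$ of an element of $S'$ satisfies $I\subset N'\subset H$.

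The main obstacle is step 4: justifying that the abstract isomorphism $H/S\simto HN'/N'$ really identifies $X_H^{S}$ with $Z$ over $X'^{S'}$, compatibly with cusps. I would handle this without any anabelian input. Both groups are subgroups of the single group $\Pi_{X'^{S'}}$, so the Galois correspondence for the fixed curve $X'^{S'}$ gives the identification. Lemma \ref{decusp_universal} then controls how the cusps of $S$ and $S'$ correspond.
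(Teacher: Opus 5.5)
Your reduction to $N'\subset H$ is correct: injectivity of $H/S\to H'/S'$ says exactly $H\cap N'=N$. Passing to curves over an algebraically closed field is also harmless, since $S$ and $S'$ are conjugation-stable, so everything can be phrased via $H\cap G_{\Delta}\subset H'\cap G_{\Delta}$.

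\textbf{The gap is step 4, which carries the whole content of the lemma.}
\begin{itemize}
\item The curve $X_H^S$ has a natural morphism to $X'^{S'}$, but you do not know it is finite \'{e}tale. It may ramify at the filled-in cusps of $S$, and it need not even be finite, since cusps of $X_H$ above $S'$ that are not in $S$ are still missing.
\item The Galois correspondence over $X'^{S'}$ only sees finite \'{e}tale covers. All it gives is a lift $X_H^S\to Z$ inducing $H/S\simto HN'/N'$.
\item Saying this lift is an isomorphism is essentially the lemma itself: $X_H\to X'$ is unramified over the cusps of $S'$, and $S$ exhausts the cusps above $S'$.
\item Your argument never uses that $X'^{S'}$ is hyperbolic, i.e.\ that $S'$ is of admissible-type, and without this the statement fails. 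Take $X'=\mathbb{G}_m$, $S'=\{0\}$, $H\subset H'=\widehat{\mathbb{Z}}$ of index $2$ (the cover $z\mapsto z^2$), and $S$ the cusp above $0$. Then $H/S\to H'/S'$ is the injection $\{1\}\to\{1\}$, yet $N'=H'\not\subset H$. Here $X_H^S\cong\mathbb{A}^1\cong Z$ abstractly, but not over $X'^{S'}$, where the map is $z\mapsto z^2$.
\item The anabelian variant does not repair this. \cref{anabelian_CAVC}(i) needs a constructible inclusion, while the lemma covers arbitrary CAVC-type inclusions over $l$-cyclotomically full fields. Even then it only gives an abstract isomorphism $X_H^S\simto Z$, whereas your comparison inside $Z$ needs compatibility with the maps to $X'^{S'}$, a Hom-type uniqueness not available here.
\end{itemize}

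\textbf{The missing idea is the one the paper invokes: control ramification at the cusps with the Riemann--Hurwitz formula.}
\begin{itemize}
\item Let $W$ be the preimage of $Z$ in the smooth compactification of $X_H$. Then $X_H^S\subset W$ and $W\to Z$ is finite of some degree $d$.
\item A point of $W\setminus X_H^S$ is a cusp of the hyperbolic curve $X_H^S$. Its inertia group is nontrivial but dies in $\Pi_W$, contradicting injectivity of $\Pi_{X_H^S}\to\Pi_Z$, which factors through $\Pi_W$. Hence $X_H^S=W$.
\item Now $\Pi_W\simto\Pi_Z$ forces $\chi(W)=\chi(Z)$, by comparing cohomological dimensions and abelianization ranks.
\item Riemann--Hurwitz gives $\chi(W)=d\,\chi(Z)-\sum_{P\in W}(e_P-1)$. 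Since $S'$ is of admissible-type, $\chi(Z)<0$.
\item Hence $d=1$ and there is no ramification, so $X_H^S\simto Z$ over $X'^{S'}$.
\end{itemize}
With this inserted, your comparison of open subschemes of $Z$ completes the proof.
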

\begin{proof}
    By [\cite{SZM}, Corollary 3.8; \cite{SZM}, Theorem 4.20(ii)], one may reduce the claim to the corresponding assertion for hyperbolic curves. The conclusion then follows immediately from well-known properties of the corresponding \etale fundamental groups, by considering the issue of ramification at the cusps in light of the Hurwitz formula.
\end{proof}

\begin{defi}\label{defi_gen_virtual_decuspidaloid}
    We say that an $\ig$-structured category $(\mf{A},A)$ is a \emph{generalized virtual decuspidaloid} if there exists an isomorphism 
    \begin{displaymath}
        (F,\tau):(\mf{A},A)\to (\mf{B},B)
    \end{displaymath}
    of $\ig$-structured categories, where $(\mf{B},B)$ is the virtual decuspidaloid associated to an inclusion of CAVC-type.
\end{defi}

\begin{defi}\label{defi_virtual_decuspidaloid_revert_to_group}
    Let $(\mathfrak{X},F)$ be a generalized virtual decuspidaloid. Let $O$ be an object of $\mathfrak{X}$. Then the image $F(O)$ is an object of $\ig$. We shall write $\Pi(O)$ (respectively, $\Delta(O)$) for the codomain (respectively, domain) of the inclusion determined by $F(O)$. 
    Since $(\mathfrak{X},F)$ is a generalized virtual decuspidaloid, it follows that the inclusion $\Delta(O)\subset \Pi(O)$ is of gen-CAVC-type.
    We shall write $G(O)\coloneqq\Pi(O)/\Delta(O)$; $\cusp(O)\coloneqq\cusp(F(O))$ (cf. \cref{remm_37}). 
    Let $f:O\to O'$ be a morphism of $\mathfrak{X}$. Denote by $f_{\ast}$ the naturally induced morphism from $\Pi(O)$ to $\Pi(O')$; by $f_{\ast}^\Delta$ the naturally induced morphism from $\Delta(O)$ to $\Delta(O')$; by $f_{\ast}^{G}$ the naturally induced morphism from $G(O)$ to $G(O')$.
\end{defi}

\begin{defi}\label{defi_virtual_decuspidaloid_over_nf_mlf}
    Let $(\mathfrak{X},F)$ be an $\ig$-structured category. Then we say that $(\mathfrak{X},F)$ is of \emph{global-type} (respectively, \emph{local-type}; \emph{arithmetic-type}) if there exists an inclusion $\Delta\subset \Pi$ of profinite groups arising from a pointed virtual curve defined over a number field (respectively, over a mixed-characteristic local field; over either a number field or a mixed-characteristic local field), together with an isomorphism
    \begin{displaymath}
    (\mathfrak{X},F)\simto(\cavc_{\Delta\subset \Pi},F_{\Delta\subset \Pi})
    \end{displaymath}
    of $\ig$-structured categories.
\end{defi}

\begin{defi}\label{defi_intrinsic_virtual_decuspidaloid}
    Let $(\mathfrak{X},F)$ be a generalized virtual decuspidaloid. 

    (i) Let $f:O\to O'$ be a morphism of $\mathfrak{X}$. Then we say that $f$ is of \emph{geo-iso-type} (respectively, of \emph{cov-type}; of \emph{decusp-type}; of \emph{geo-decusp-type}, \emph{admissible-type}) if the induced morphism $f_{\ast}$ is of geo-iso-type (respectively, of cov-type; of decusp-type; of geo-decusp-type; of admissible-type) (cf. \cref{defi_CAVC_morphism_type}).

    (ii) Let $O$ be an object of $\mf{X}$. Then we say that $O$ is a \emph{root} of $\mf{X}$ if every morphism in $\hom(-,O)$ is of cov-type and every morphism in $\hom(O,-)$ is of decusp-type.

    (iii) Let $O,O'$ be objects of $\mf{X}$. Then we say that $O'$ is a \emph{field base change} (respectively, \emph{decuspidalization}; \emph{covering}) of $O$ if there exists a morphism 
    \begin{displaymath}
    f: O'\to O\ (\text{respectively, }f:O\to O',\ f:O'\to O)
    \end{displaymath}
    such that $f$ is of geo-iso-type (respectively, of decusp-type; of cov-type). We remark that such a morphism is unique if it exists (cf. \cref{defi_virtual_decuspidaloid}(ii)).

    (iv) Let $O,O'$ be objects of $\mf{X}$. Then we say that $O'$ is a \emph{subjugate} of $O$ if there exists an object $O''\in\mf{X}$ such that $O'$ is a decuspidalization of $O''$ and $O''$ is a covering of $O$.

    (v) Let $O$ be an object of $\mf{X}$. Then we say that $O$ is \emph{proper} (respectively, \emph{non-proper}) if the inclusion $\Delta(O)\subset\Pi(O)$ is of cls-CAVC-type (respectively, of CAVC-type).

    (vi) Let $O$ be an object of $\mf{X}$. Then we say that $O$ is of type $(g,r)$ (respectively, genus $g$) if the inclusion $\Delta(O)\subset\Pi(O)$ is of type $(g,r)$ (respectively, genus $g$).
\end{defi}

\begin{remm}
    By [\cite{SZM}, Proposition 5.7], the subjugate relation defined in \cref{defi_intrinsic_virtual_decuspidaloid}(iv) is transitive.
\end{remm}

\begin{coro}\label{prop_intrinsic_virtual_decuspidaloid}
    Let $(\mathfrak{X},F)$ be a generalized virtual decuspidaloid.

    (i) Let $O$, $O'$ be two objects of $\mf{X}$ that are isomorphic. Then $O=O'$.

    (ii) There exists a \emph{unique} root of $\mf{X}$. Denote this object by $\root(\mf{X})$.

    (iii) Let $O$ be an object of $\mf{X}$. Then $O$ is a subjugate of $\root(\mf{X})$.

    (iv) Let $O$ be an object of $\mf{X}$; $H\subset \Pi(O)$ an open subgroup. Then there exists a unique covering $O'$ of $O$ such that the image of $\Pi(O')$ via the unique morphism from $O'$ to $O$ coincides with $H$. We say that $O'$ is the covering of $O$ induced by $H$.

    (v) Let $O$, $O'$ be two objects of $\mf{X}$ such that $O'$ is a decuspidalization of $O$; $f'$ the unique morphism from $O$ to $O'$. Then there exists a unique $G(O)$-invariant subset $S\subset \cusp(O)$ such that the kernel of $f_{\ast}$ is the closed normal subgroup of $\Delta(O)$ generated by the union of all representatives of the conjugacy classes in $S$ (cf. \cref{well_defined_for_kernel_cusp}). We say that $S$ is the \emph{kernel class} of $\cusp(O)$ determined by the morphism $f': O \to O'$ of decusp-type. 
    Moreover, suppose that $O''\in\obj(\mf{X})$ is another decuspidalization of $O$ such that $S$ coincides with the kernel class of $\cusp(O)$ determined by the morphism $f'': O \to O''$ of decusp-type. Then $O''=O'$. Hence $O'$ is uniquely determined by $S$.
    We shall say that $O'$ is the decuspidalization of $O$ induced by $S$.
\end{coro}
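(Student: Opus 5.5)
The plan is to transport everything along the isomorphism $(F,\tau)$ of \cref{defi_gen_virtual_decuspidaloid}, so that we may assume $(\mf{X},F)=(\cavc_{G_{\Delta}\subset G_{\Pi}},F_{G_{\Delta}\subset G_{\Pi}})$ for an inclusion $G_{\Delta}\subset G_{\Pi}$ of CAVC-type. Isomorphisms of $\ig$-structured categories are bijective on objects and morphisms, and $\tau$ is a natural isomorphism, so the notions of cov-type, decusp-type, root, covering and decuspidalization (all defined via the induced morphisms $f_{\ast}$) are preserved. All five assertions therefore reduce to statements about the explicit category of \cref{defi_virtual_decuspidaloid}, whose objects are pairs $(H,S)$ with at most one morphism between any two objects.

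For (i): if $(H,S)\cong(H',S')$, then there are morphisms in both directions, so $H\subset H'\subset H$, hence $H=H'$. Moreover $S$ is contained in the set induced by $S'$, and vice versa. Since $H=H'$, the induced set of conjugacy classes is just $S'$ itself (\cref{defi_cusp_compatibility_subgroup}), so $S=S'$.

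For (ii) and (iii): I take the root to be $(G_{\Pi},\emptyset)$.
\begin{itemize}
\item A morphism $(H,S)\to(G_{\Pi},\emptyset)$ requires $S\subset\emptyset$. So the induced map is the inclusion $H\injto G_{\Pi}$, which is injective with open image, i.e. of cov-type.
\item A morphism $(G_{\Pi},\emptyset)\to(H',S')$ requires $G_{\Pi}\subset H'$, so $H'=G_{\Pi}$. The induced map is then the quotient $G_{\Pi}\surjto G_{\Pi}/S'$, whose kernel is generated by the admissible-type set $S'$. This is of decusp-type.
\end{itemize}
For uniqueness, suppose $(H,S)$ is a root. There is a morphism $(H,S)\to(G_{\Pi},\emptyset)$ only when $S=\emptyset$. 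Since $(G_{\Pi},\emptyset)\to(H,\emptyset)$ exists only when $H=G_{\Pi}$, I instead use the morphism $(H,S)\to(G_{\Pi},S^{\ast})$, where $S^{\ast}$ is the image of $S$ under the induced surjection. This is the step I expect to be the main obstacle: I must check that $S^{\ast}$ is of admissible-type and that this morphism being of decusp-type forces $H=G_{\Pi}$ via surjectivity. The cardinality bound needs care in genus $0,1$, and I would first try replacing $S^{\ast}$ by the $G_{\Pi}$-orbit closure. A cleaner route is to use the morphism $(H,\emptyset)\to(H,S)$: it is a decusp-morphism into $(H,S)$ and must be of cov-type, i.e. injective. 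Hence $S=\emptyset$ by \cref{decusp_universal} (cuspidal inertia groups are nontrivial). Then $(H,\emptyset)\to(G_{\Pi},\emptyset)$ must be of decusp-type, hence surjective, so $H=G_{\Pi}$. For (iii), given $(H,S)$, the chain $(H,\emptyset)\to(G_{\Pi},\emptyset)$ (a covering) together with $(H,\emptyset)\to(H,S)$ (a decuspidalization) exhibits $(H,S)$ as a subjugate of the root.

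For (iv): given $O=(H,S)$ and an open $H_{0}\subset\Pi(O)=H/S$, let $\tilde H_{0}$ be the preimage of $H_{0}$ in $H$. A covering $O'=(H',S')\to O$ is of cov-type, hence injective. By \cref{open_unique}, this forces each representative of $S$ to lie in $H'$, and forces $S'$ to be the set induced by $S$. The image condition then forces $H'=\tilde H_{0}$. This gives uniqueness, and $(\tilde H_{0},S|_{\tilde H_{0}})$ gives existence once one checks admissibility, which follows via \cref{covering_group_theoretic} and \cref{decuspidalization-group-theoretic}.

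For (v): the kernel class exists and is unique by \cref{well_defined_for_kernel_cusp}. A decuspidalization $O'=(H',S')$ of $O=(H,S)$ has $H'=H$ by surjectivity. The kernel of $H/S\to H/S'$ is generated by $S'\setminus S$, so the kernel class determines $S'$. By (i)-type uniqueness of objects, this gives $O''=O'$.
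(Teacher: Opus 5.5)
Your reduction to the explicit category $\cavc_{G_{\Delta}\subset G_{\Pi}}$ and your arguments for (i)--(iii) are fine, and they are the unwinding of definitions the paper has in mind; (iv) likewise rests on \cref{open_unique}, as in the paper. The gap is in (v), at the step ``a decuspidalization $(H',S')$ of $(H,S)$ has $H'=H$ by surjectivity.''

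Surjectivity of $H/S\to H'/S'$ only gives $H\cdot N_{S'}=H'$, where $N_{S'}\subset H'$ is the closed normal subgroup generated by the representatives of $S'$. It does not force $H=H'$. For instance, take a root of type $(0,4)$, let $S'=\{\gamma\}$ be a single $G_{\Pi}$-stable cusp class, and let $H\subset G_{\Pi}$ be the kernel of a quadratic (Kummer) character that is nontrivial on a representative $I$ of $\gamma$. Then $(H,\emptyset)\to(G_{\Pi},S')$ is a morphism of the category. Since $I\not\subset H$ and $[G_{\Pi}:H]=2$, we get $H\cdot N_{S'}=G_{\Pi}$, so the induced map is surjective although $H\neq G_{\Pi}$. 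This map is of course not of decusp-type, and that is the point: to get $H'=H$ you must use that the kernel is generated by cuspidal inertia groups. Since the ``moreover'' part of (v) is precisely the claim that the kernel class pins down $O'$, this step is the crux of (v).

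This is where the paper's citation of \cref{open_unique} for (v) comes in. Let $T\subset\cusp(O)$ be the kernel class, and let $\tilde{T}$ be the corresponding set of cusp classes of $H\cap G_{\Delta}\subset H$.
\begin{itemize}
\item A cusp of $H$ lying over a cusp of $H'$ outside $S'$ has inertia group open in an inertia group of $H'$. That inertia group maps isomorphically into $H'/S'$ (cf.\ \cref{decusp_universal} and the geometric description of decuspidalizations), so it is not killed.
\item Hence $\tilde{T}$ lies in the set induced by $S'$. The set $S\cup\tilde{T}$ is of admissible-type because $T$ is, so $(H,S\cup\tilde{T})$ is an object with a morphism to $(H',S')$.
\item The induced map $H/(S\cup\tilde{T})=(H/S)/T\to H'/S'$ is an isomorphism, in particular injective. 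By \cref{open_unique}, every representative of an element of $S'$ lies in $H$.
\item Therefore $N_{S'}\subset H$, and $H'=H\cdot N_{S'}=H$.
\end{itemize}
Only after this does your observation that $S'=S\cup\tilde{T}$ is determined by $T$ finish the proof.

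There is a smaller version of the same issue in (iv). \cref{open_unique} only tells you that the representatives of $S$ lie in $H'$. To conclude that $S'$ equals the induced set $S^{\ast}$, you also need that cusp classes of $H'$ outside $S'$ survive in $H'/S'$. For existence, you should check both that $S^{\ast}$ is of admissible-type and that $N_{S^{\ast}}=N_{S}$, which gives injectivity.
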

\begin{proof}
    Assertions (i), (ii) and (iii) follow immediately from 
    \cref{defi_virtual_decuspidaloid}, \cref{CAVC_functor_defi}, \cref{defi_gen_virtual_decuspidaloid}, and \cref{defi_intrinsic_virtual_decuspidaloid}. Assertion (iv) follows immediately from \cref{open_unique}. Assertion (v) follows immediately from \cref{well_defined_for_kernel_cusp} and \cref{open_unique}.
\end{proof}

\begin{remm}
    Let $(\mathfrak{X},F)$ be a generalized virtual decuspidaloid. For any object $O$ of $\mf{X}$, $\Pi(O)$ (respectively, $\Delta(O)$) is a subquotient of $\Pi(\root(\mf{X}))$. Hence one may regard $\Pi(O)$ (respectively, $\Delta(O)$) as a subquotient of $\Pi(\root(\mf{X}))$. For instance, for two objects $O$, $O'$ of $\mf{X}$, it makes sense to say that $\Delta(O)=\Delta(O')$. Moreover, the notion of a virtual decuspidaloid serves as a convenient terminological apparatus for referring to the various subquotients of a virtual fundamental group of CAVC-type that naturally arise from geometric objects.
\end{remm}

\begin{defi}\label{defi_constructible_object}
    Let $(\mathfrak{X},F)$ be a generalized virtual decuspidaloid; $O$ an object of $\mf{X}$. Then we shall say that $O$ is \emph{constructible} if $F(O)$ is constructible (cf. \cref{defi_constructible_inclusion}(ii)).
\end{defi}

\begin{prop}\label{constructible_via_subjugate}
    Let $(\mathfrak{X},F)$ be a generalized virtual decuspidaloid; $O$, $O'$ objects of $\mf{X}$ such that $O'$ is a subjugate of $O$. Assume further that $O$ is constructible. Then $O'$ is constructible.
\end{prop}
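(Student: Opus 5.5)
We unwind the definition of a subjugate (cf. \cref{defi_intrinsic_virtual_decuspidaloid}(iv)). By definition, there is an object $O''$ of $\mf{X}$ such that $O''$ is a covering of $O$ and $O'$ is a decuspidalization of $O''$. Concretely, this gives a morphism $g:O''\to O$ of cov-type and a morphism $h:O''\to O'$ of decusp-type. Hence $g_{\ast}:\Pi(O'')\to\Pi(O)$ is a morphism of inclusions of cov-type, and $h_{\ast}:\Pi(O'')\to\Pi(O')$ is a morphism of inclusions of decusp-type. All of the inclusions $\Delta(\cdot)\subset\Pi(\cdot)$ involved are of gen-CAVC-type, since $(\mf{X},F)$ is a generalized virtual decuspidaloid (cf. \cref{defi_virtual_decuspidaloid_revert_to_group}). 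The proof then splits into two steps, one for each morphism.

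\emph{Step 1.} Since $O$ is constructible, $F(O)$ admits a GD $(X,\phi)$ (cf. \cref{defi_constructible_object}, \cref{defi_constructible_inclusion}). We apply \cref{anabelian_CAVC_more}(i) to the cov-type morphism $g_{\ast}:\Pi(O'')\to\Pi(O)$. This shows that $F(O'')$ is constructible, i.e., it admits a GD $(X'',\phi'')$.

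\emph{Step 2.} We apply \cref{anabelian_CAVC_more}(ii) to the decusp-type morphism $h_{\ast}:\Pi(O'')\to\Pi(O')$, using the GD $(X'',\phi'')$ from Step 1. This shows that $F(O')$ is constructible, so $O'$ is constructible.

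The only point requiring care is Step 2, where the hypotheses of \cref{anabelian_CAVC_more}(ii) must be checked. That result requires the source $\Delta(O'')\subset\Pi(O'')$ to carry a GD, which Step 1 supplies. A decusp-type morphism has a CAVC-type source, so $O''$ must be non-proper. If $O''$ is proper, then by \cref{defi_CAVC_morphism_type}(vi) a decusp-type morphism out of $O''$ is an isomorphism; in that case $F(O')\cong F(O'')$ as inclusions, and constructibility of $O'$ follows directly by transporting the GD of $O''$ along this isomorphism. With these checks, the proposition reduces entirely to the two cited parts of \cref{anabelian_CAVC_more}, and we expect no genuine obstacle beyond this bookkeeping.
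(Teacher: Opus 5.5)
Your proposal is correct and is exactly the paper's argument: unwind \cref{defi_intrinsic_virtual_decuspidaloid}(iv) to obtain the intermediate covering $O''$, then apply \cref{anabelian_CAVC_more}(i) to the cov-type morphism $O''\to O$ and \cref{anabelian_CAVC_more}(ii) to the decusp-type morphism $O''\to O'$. Your separate treatment of the case where $O''$ is proper is harmless extra bookkeeping that the paper leaves implicit; there, by \cref{defi_CAVC_morphism_type}(vi), a decusp-type morphism is an isomorphism.
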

\begin{proof}
    The assertion follows immediately from \cref{anabelian_CAVC_more}(i)(ii) and \cref{defi_intrinsic_virtual_decuspidaloid}(iv).
\end{proof}

\begin{defi}\label{defi_virtual_decuspidaloid_arises_from_object}
    Let $(\mathfrak{X},F)$ be a generalized virtual decuspidaloid; $O$ an object of $\mf{X}$. 
    
    (i) Denote by $\mathfrak{X}|_{O}$ the full subcategory of $\mathfrak{X}$ determined by the set of all subjugates of $O$; by $I_{O}:\mathfrak{X}|_{O}\injto \mathfrak{X}$ the naturally induced inclusion functor; by $F|_{O}$ the restriction of $F$ to $\mathfrak{X}|_{O}$. Write $\id_{F|_{O}}$ for the identity natural transformation on $F|_{O}$.  Then it follows immediately that $(I_{O},\id_{F|_{O}}):(\mathfrak{X}|_{O},F|_{O})\injto(\mathfrak{X},F)$ defines an immersion of $\ig$-structured categories (cf. \cref{Notations and Terminologies}, the discussion entitled ``Graphs and Categories''). We shall refer to $\mathfrak{X}|_{O}$ as the \emph{subjugate subcategory} of $\mathfrak{X}$ induced by $O$ and to $(I_{O},\id_{F|_{O}})$ as the natural \emph{full-immersion} of $\ig$-structured categories induced by $O$.

    (ii) Denote by $\mathfrak{X}|_{O}^{\cov}$ the full subcategory of $\mathfrak{X}$ determined by the set of all coverings of $O$; by $I_{O}^{\cov}:\mathfrak{X}|_{O}^{\cov}\injto \mathfrak{X}$ the naturally induced inclusion functor; by $F|_{O}^{\cov}$ the restriction of $F$ to $\mathfrak{X}|_{O}^{\cov}$. Write $\id_{F|_{O}^{\cov}}$ for the identity natural transformation on $F|_{O}^{\cov}$. Then it follows immediately that $(I_{O}^{\cov},\id_{F|_{O}^{\cov}}):(\mathfrak{X}|_{O}^{\cov},F|_{O}^{\cov})\injto(\mathfrak{X},F)$ defines an immersion of $\ig$-structured categories. We shall refer to $\mathfrak{X}|_{O}^{\cov}$ as the \emph{covering subcategory} of $\mathfrak{X}$ induced by $O$ and to $(I_{O}^{\cov},\id_{F|_{O}^{\cov})}$ as the natural \emph{covering-immersion} of $\ig$-structured categories induced by $O$.

    (iii) Denote by $\mathfrak{X}|_{O}^{\bc}$ the full subcategory of $\mathfrak{X}$ determined by the set of all field base changes of $O$; by $I_{O}^{\bc}:\mathfrak{X}|_{O}^{\bc}\injto \mathfrak{X}$ the naturally induced inclusion functor; by $F|_{O}^{\bc}$ the restriction of $F$ to $\mathfrak{X}|_{O}^{\bc}$. Write $\id_{F|_{O}^{\bc}}$ for the identity natural transformation on $F|_{O}$. Then it follows immediately that $(I_{O}^{\bc},\id_{F|_{O}^{\bc}}):(\mathfrak{X}|_{O}^{\bc},F|_{O}^{\bc})\injto(\mathfrak{X},F)$ defines an immersion of $\ig$-structured categories. We shall refer to $\mathfrak{X}|_{O}^{\bc}$ as the \emph{base change subcategory} of $\mathfrak{X}$ induced by $O$ and to $(I_{O}^{\bc},\id_{F|_{O}^{\bc}})$ as the natural \emph{base-change-immersion} of $\ig$-structured categories induced by $O$.
\end{defi}

It follows immediately from \cref{defi_virtual_decuspidaloid_arises_from_object} that the following result holds.

\begin{coro}
    Let $(\mathfrak{X},F)$ be a generalized virtual decuspidaloid; $O$ a non-proper object of $\mf{X}$. Then $(\mathfrak{X}|_{O},F|_{O})$ is a generalized virtual decuspidaloid, where we note that the isomorphism of \cref{defi_gen_virtual_decuspidaloid} is obtained by forming the quotient by the ``$S$'' that appears in the pair ``$(H,S)$'' of \cref{defi_virtual_decuspidaloid} corresponding to $O$. 
\end{coro}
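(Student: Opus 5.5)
The plan is to reduce to the model case $(\mathfrak{X},F)=(\cavc_{G_{\Delta}\subset G_{\Pi}},F_{G_{\Delta}\subset G_{\Pi}})$ by means of the isomorphism of \cref{defi_gen_virtual_decuspidaloid}. This is harmless: isomorphisms of $\ig$-structured categories are bijective on objects and morphisms and carry the subjugate relation to itself. Under this identification the object $O$ corresponds to a pair $(H,S_{0})$ with $H\subset G_{\Pi}$ open and $S_{0}\subset\cusp(H\cap G_{\Delta}\subset H)$ of admissible-type. Since $O$ is non-proper, \cref{decuspidalization-group-theoretic} together with \cref{cls_diff_uncls}(ii) shows that the inclusion $G'_{\Delta}\coloneqq (H\cap G_{\Delta})/S_{0}\subset G'_{\Pi}\coloneqq H/S_{0}$ is of CAVC-type. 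I will construct an isomorphism
\begin{displaymath}
(\Phi,\tau):(\mathfrak{X}|_{O},F|_{O})\to(\cavc_{G'_{\Delta}\subset G'_{\Pi}},F_{G'_{\Delta}\subset G'_{\Pi}}).
\end{displaymath}

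\textbf{Step 1: description of the subjugates of $O$.} By \cref{defi_intrinsic_virtual_decuspidaloid}(iii)(iv), an object $O'=(H',S')$ is a subjugate of $O$ if and only if there is some $O''=(H'',S'')$ with morphisms $O''\to O$ of cov-type and $O''\to O'$ of decusp-type.

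For the cov-type morphism, injectivity of $H''/S''\to H/S_{0}$ together with \cref{open_unique} should force $H''\subset H$. It should also force $S''$ to be exactly the set of classes induced by $S_{0}$ (\cref{defi_cusp_compatibility_subgroup}, \cref{covering_group_theoretic}). Consequently $H''/S''$ is the open subgroup $\bar H''\coloneqq H''/S_{0}$ of $G'_{\Pi}$.

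For the decusp-type morphism, $H'=H''$ (surjectivity and openness), and $S'\supset S''$. Then $S'\setminus S''$ maps bijectively, via \cref{decusp_universal} and \cref{determined_cusp_morphism}(ii), onto a subset $T$ of $\cusp(\bar H''\cap G'_{\Delta}\subset\bar H'')$. The admissibility of $S'$ is equivalent to the admissibility of $T$, because the type of $\bar H''$ is obtained from that of $H''$ by removing $\card(S'')$ cusps.

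The map $\Phi$ is therefore defined by $\Phi(H',S')\coloneqq(\bar H',T)$. The inverse map takes $(\bar H',T)$ to $(H',S'')$ together with the preimage of $T$ under $\cusp(\bar H')\injto\cusp(H')$, where $H'$ is the preimage of $\bar H'$ in $H$. Checking that this inverse is well defined uses \cref{prop_intrinsic_virtual_decuspidaloid}(iv)(v) and \cref{covering_group_theoretic}.

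\textbf{Step 2: morphisms and $\tau$.} Both categories have at most one morphism between any two objects. It therefore suffices to show that the containment conditions of \cref{defi_virtual_decuspidaloid}(ii) correspond under $\Phi$, which follows from \cref{transitive_cusp} applied to the quotient maps. The natural isomorphism $\tau$ is the canonical identification
\begin{displaymath}
H'/S'\simto (H'/S_{0})/T,
\end{displaymath}
which holds because the normal closure of $S''$ in $H'$ is contained in the normal closure of $S'$. Naturality with respect to the induced subquotient maps is then immediate.

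\textbf{Main obstacle.} I expect the hardest part to be Step 1. The difficulty is verifying that every object $(H'',S'')$ admitting a cov-type morphism to $(H,S_{0})$ has $S''$ equal to the full induced set of $S_{0}$, rather than merely contained in it. This is where injectivity and the Hurwitz/ramification argument behind \cref{open_unique} enter. A secondary difficulty is verifying that admissible-type is preserved in both directions; I would handle this by case analysis on the genus as in \cref{defi_admissible_type}, using the fact that decuspidalization preserves $g$.
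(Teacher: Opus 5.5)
Your strategy is the one the paper intends: identify the subjugates of $O=(H,S_{0})$ with the objects of $\cavc_{G'_{\Delta}\subset G'_{\Pi}}$ by passing to the quotient by $S_{0}$, and take $\tau$ to be the canonical identification of iterated quotients. The paper regards this as immediate from the definitions. However, the load-bearing claim of your Step 1 is justified incorrectly.

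\textbf{The decusp step.} You say that a decusp-type morphism $(H'',S'')\to(H',S')$ forces $H''=H'$ by ``surjectivity and openness''. Surjectivity of $H''/S''\to H'/S'$ only gives $H''N'=H'$, where $N'$ is the closed normal subgroup of $H'$ generated by the representatives of $S'$. This does not force $H''=H'$.

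For a counterexample, take the fibre of genus $0$ with at least $5$ cusps and $S'=\{\lambda,\mu\}$. Let $H''\subset H'$ be the index-$2$ subgroup corresponding to a double cover ramified exactly over $\lambda$ and $\mu$, and let $S''$ be the two classes induced by $S'$. Then $(H'',S'')\to(H',S')$ is a morphism of $\cavc$ whose induced map is surjective, yet $H''\neq H'$.

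What rules this out is the requirement that the kernel be generated by cuspidal inertia. Let $K$ be the kernel class and $\tilde{K}$ its preimage in $\cusp(H''\cap G_{\Delta}\subset H'')$. Then $(H'',S''\cup\tilde{K})$ is an object mapping to $(H',S')$ with induced map an isomorphism. So \cref{open_unique} puts every representative of $S'$ inside $H''$, giving $N'\subset H''$ and hence $H'=H''N'=H''$. Alternatively, you can avoid needing $H'=H''$: one already has $N\subset H''\subset H'$, and \cref{normalizer} shows that $S''$ lying among the classes induced by $S'$ forces $S'$ to contain the classes of $H'$ induced by $S_{0}$. That is enough to define $T$.

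\textbf{The cov step.} The inclusion $H''\subset H$ is automatic from the existence of a morphism, so it is not what \cref{open_unique} provides. What it provides is that every representative of $S_{0}$, and hence the closed normal subgroup $N$ they generate, lies in $H''$. Thus $H''$ is the full preimage of its image in $H/S_{0}$. This is exactly what makes $\Phi$ injective on objects and your preimage-based inverse well defined, so it should be stated explicitly.

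Conversely, the point you flag as the main obstacle, that $S''$ equals the full set induced by $S_{0}$, needs no Hurwitz argument. A class induced by $S_{0}$ but not in $S''$ survives isomorphically in $H''/S''$ by \cref{decusp_universal}, yet dies in $H/S_{0}$, which contradicts injectivity.

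With these repairs, Step 2 and $\tau$ go through as you describe. Matching the morphism conditions uses $N\subset H_{1}$, via $\gamma N\cap H_{1}=(\gamma\cap H_{1})N$, rather than \cref{transitive_cusp} alone.
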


\begin{defi}
    Let $(\mathfrak{X},F)$ be a generalized virtual decuspidaloid; $O$, $O'$ objects of $\mf{X}$. Assume that the set of morphisms from $O$ to $O'$ is nonempty. Then denote by $m^{O,O'}$ the unique morphism from $O$ to $O'$ (cf. \cref{defi_virtual_decuspidaloid}).
\end{defi}


\section{Generic Symmetries of Configuration Spaces}\label{section_symmetry}

In this section, we study generic symmetries of 2-configuration spaces and examine automorphisms of inclusions of CAVC-type arising from these spaces.

\begin{lemm}\label{normalizer}
    Let $\mathcal{C} = (X,Y,f,k,\bar{k},t,s)$ be a pointed virtual curve over a field $k$ of characteristic zero. Let $I$ be a representative of an element of $\cusp(\mathcal{C})$. Then the normalizer $N_{\Delta_{\mathcal{C}}}(I)$ of $I$ in $\Delta_{\mathcal{C}}$ is $I$. Moreover, if $I'$ is another representative of an element of $\cusp(\mathcal{C})$ such that $I'\neq I$, then $I\cap I' = \{1\}$.
\end{lemm}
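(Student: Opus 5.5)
The plan is to reduce the statement to the classical facts about cuspidal inertia subgroups in the geometric fundamental group of a hyperbolic curve over an algebraically closed field of characteristic zero. I would first use [\cite{SZM}, Proposition 3.7], as in the proof of \cref{Galois_actions_on_D}. Let $\xi$ be the generic point of $Y$ and $\bar{\xi}$ a geometric point over $\xi$. Then the natural map $\Pi_{X_{\bar{\xi}}}\to \Delta_{\mathcal{C}}$ is an isomorphism. Under this isomorphism, $\cusp(\mathcal{C})$ corresponds to $\cusp(X_{\xi})$, viewed as conjugacy classes of subgroups of $\Delta_{X_{\xi}}=\Pi_{X_{\bar{\xi}}}$. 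This is the identification obtained in the proof of \cref{Galois_actions_on_D}, via the argument of [\cite{SZM}, Theorem 4.20(iii)], or alternatively via \cref{virtual_inertia_and_real_inertia}.

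If the cyclotomic-fullness hypothesis used there is not available, I would instead invoke [\cite{SZM}, Theorem 4.20(ii)] together with \cref{field_ext_curve_theoretic}. These allow the section $s$ to be replaced, without changing $\Delta_{\mathcal{C}}$ or $\cusp(\mathcal{C})$, by one arising from a rational point, since $\cusp(\mathcal{C})$ depends only on the inclusion $\Delta_{\mathcal{C}}\subset\Pi_{\mathcal{C}}$. Once $s$ arises from a rational point $y\in Y(k)$, the fiber $X_y$ is a hyperbolic curve over $k$. Its geometric fundamental group is identified with $\Delta_{\mathcal{C}}$, compatibly with cusps, by [\cite{SZM}, Corollary 3.8; \cite{SZM}, Theorem 4.20(iii)].

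This reduces the lemma to the following statement. Let $\Delta$ be the geometric fundamental group of a hyperbolic curve $Z$ over an algebraically closed field of characteristic zero, and let $I$ be a cuspidal inertia subgroup. Then $N_\Delta(I)=I$, and distinct cuspidal inertia subgroups intersect trivially. I would deduce this from the commensurable terminality of cuspidal inertia subgroups, [\cite{MZK3}, \S 0] together with the standard result (e.g.\ [\cite{MZK1}]) that the commensurator of a cuspidal inertia group in $\Delta$ is itself. Since $I\subseteq N_\Delta(I)\subseteq C_\Delta(I)=I$, the first claim follows.

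For the second claim, suppose $J:=I\cap I'\neq\{1\}$. Every nontrivial closed subgroup of $I\cong\widehat{\Z}$ is open in $I$, so $J$ is open in both $I$ and $I'$. Hence $I'$ lies in the commensurator of $I$, which gives $I'\subseteq I$, and symmetrically $I\subseteq I'$. This contradicts $I\neq I'$. Equivalently, one can argue geometrically: $J$ fixes two distinct points of the set $\tilde{D}$ of cusps of the universal pro-\'etale covering, and this is impossible by the Hurwitz-type argument already used in \cref{open_unique}. The argument runs by passing to a finite \'etale cover in which the image of $J$ fixes both cusps, then to an open subgroup of $J$ of index prime to the ramification, and comparing the local inertia at the two cusps.

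The main obstacle is the first step, namely matching $\cusp(\mathcal{C})$ with the genuine cuspidal inertia groups of a single fiber. This is needed in the generality of the lemma, where only characteristic zero is assumed. If the reduction to a rational section proves delicate, I would work directly with the generic fibre $X_\xi$ over $K(Y)$. That route avoids rational points entirely, since $\bar{\xi}$ lies over an algebraically closed field of characteristic zero and the classical results apply verbatim there.
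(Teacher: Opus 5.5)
\textbf{Where the proposal breaks.} Your reduction step matches the paper. The paper passes via [\cite{SZM}, Theorem 4.20(ii)(iii)] to a section arising from a rational point and then quotes [\cite{CombGC}, Proposition 1.2(i)(ii)]. Your normalizer argument $I\subseteq N_{\Delta}(I)\subseteq C_{\Delta}(I)=I$ is also fine. The gap is in your derivation of $I\cap I'=\{1\}$ from commensurable terminality. The claim that every nontrivial closed subgroup of $I\cong\widehat{\Z}$ is open in $I$ is false. Since $\widehat{\Z}\cong\prod_{p}\Z_{p}$, a single factor $\Z_{p}$ is a nontrivial closed subgroup of infinite index. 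So a priori $J=I\cap I'$ could be nontrivial without being open in $I$ or in $I'$. In that case nothing places $I'$ in the commensurator of $I$, and the argument stops.

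\textbf{The geometric alternative does not repair it.} At a finite level, a nontrivial automorphism of a curve can fix two distinct ramification points; for example, $z\mapsto\zeta z$ on $\mathbb{P}^{1}$ fixes $0$ and $\infty$. So ``$J$ fixes two cusps'' is not contradictory by itself. Passing to ``an open subgroup of $J$ of index prime to the ramification'' does not produce a contradiction either.

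\textbf{How to fix it.} You need a genuinely global input. The simplest option is to cite the trivial-intersection statement directly, namely [\cite{CombGC}, Proposition 1.2(ii)], which is what the paper does. Alternatively, argue as follows.
\begin{enumerate}
\item Suppose $1\neq\gamma\in I\cap I'$. Since $I\neq I'$ are stabilizers of distinct points of $\tilde{D}$, there is an open normal subgroup $H\subseteq\Delta$ such that $I\cap H$ and $I'\cap H$ are inertia groups at distinct cusps $c\neq c'$ of the corresponding covering $Z_{H}$. Shrinking $H$ if necessary, we may assume $Z_{H}$ has $r_{H}\geq 3$ cusps.
\item Write $e_{c},e_{c'}\in H^{\ab}$ for the images of generators of $I\cap H$ and $I'\cap H$. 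In $H^{\ab}\cong\widehat{\Z}^{2g_{H}+r_{H}-1}$, the cusp classes satisfy only the relation that their sum vanishes. Since $r_{H}\geq 3$, the classes $e_{c},e_{c'}$ are part of a $\widehat{\Z}$-basis.
\item Consequently $I\cap H\to H^{\ab}$ is injective, and $\widehat{\Z}e_{c}\cap\widehat{\Z}e_{c'}=0$.
\item Some power $\gamma^{n}$ with $n\geq 1$ lies in $I\cap I'\cap H$, and $\gamma^{n}\neq 1$ because $I$ is torsion-free.
\item But the image of $\gamma^{n}$ in $H^{\ab}$ lies in $\widehat{\Z}e_{c}\cap\widehat{\Z}e_{c'}=0$, contradicting the injectivity in step 3.
\end{enumerate}
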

\begin{proof}
    By [\cite{SZM}, Proposition 4.20(ii)], one may assume without loss of generality that $s$ arises from a rational point. Then \cref{normalizer} follows immediately from [\cite{SZM}, Proposition 4.20(iii); \cite{CombGC}, Proposition 1.2(i)(ii)].
\end{proof}

\begin{prop}\label{generic_symmetry_genus_0}

    Let $k$ be a field of characteristic zero; $X$ a smooth curve over $k$ that is isomorphic to $ \mathbf{P}^{1}_{k}\setminus\{0,1,\infty\}$. Then the following assertions hold:

    (i) The subgroup $I$ of the group of $k$-automorphisms $\Aut_{k}(X_{2})$ of $X_{2}$ that consists of $f\in \Aut_{k}(X_{2})$ such that there exists a $k$-automorphism $h\in \Aut_{k}(X)$ of $X$ and a commutative diagram
    \begin{displaymath}
        \xymatrix{
            X_{2}\ar^{f}[r]\ar^{\pr^{2/1}_{X}}[d] &X_{2}\ar^{\pr^{2/1}_{X}}[d]\\
            X\ar^{h}[r]& X
        }
    \end{displaymath}
    is isomorphic to $\mathbb{S}_{4}$. Moreover, for any $f\in I$, the $h\in \Aut_{k}(X)$ that satisfies the above commutativity property is unique. In particular, we obtain a homomorphism $u:I\to \Aut_{k}(X)$, whose kernel $J$ is isomorphic to $\Z/2\Z\times \Z/2\Z$. 
    
    (ii) We maintain the notations and terminologies of assertion (i). Assume further that $s:G_{k}\to \Pi_{X}$ is a section of the natural surjection $\Pi_{X}\surjto G_{k}$. Then each element of $J$ may be regarded as an automorphism of $[\pr^{2/1}_{X},s]$. Therefore, $J$ acts  naturally on $\cusp([\pr^{2/1}_{X},s])$. This action of $J$ on $\cusp([\pr^{2/1}_{X},s])$ is faithful. Moreover, the image of $J$, regarded as a subgroup of $\Aut_{\set}(\cusp([\pr^{2/1}_{X},s]))\simto \mathbb{S}_{4}$, is the Klein group (i.e., the intersection of all $2$-Sylow subgroups of $\mathbb{S}_{4}$).
\end{prop}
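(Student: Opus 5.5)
The plan is to identify $X_{2}$ with the moduli space $M_{0,5}$ of five ordered distinct points on $\mathbb{P}^{1}$, normalized as $(0,1,\infty,x,y)$. Under this identification $\pr^{2/1}_{X}$ becomes the forgetful morphism $M_{0,5}\to M_{0,4}\simeq X$ that forgets the fifth point $y$; here $M_{0,4}$ is identified with $X$ via the cross-ratio, i.e.\ via $x$. I will use the classical fact that $\Aut(M_{0,5}\times_{k}\bar{k})\simeq \mathbb{S}_{5}$, acting by permuting the five marked points. In characteristic zero all of these automorphisms are already defined over $\mathbb{Q}$, so they commute with $\gal(\bar{k}/k)$ and hence $\Aut_{k}(X_{2})\simeq\mathbb{S}_{5}$ as well. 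The same reasoning for $M_{0,4}$ gives $\Aut_{k}(X)\simeq\mathbb{S}_{3}$, realized as the image of $\mathbb{S}_{4}$ (permutations of $\{0,1,\infty,x\}$) modulo the Klein group.

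For (i), I first check that a permutation $\sigma\in\mathbb{S}_{5}$ fixing the label $y$ descends along the forgetful map, to the automorphism of $M_{0,4}$ given by the induced permutation of $\{0,1,\infty,x\}$. Conversely, suppose $f$ lies over some $h$. Then $f$ preserves the fibration $\pr^{2/1}_{X}$, so it maps fibers to fibers and hence permutes the forgetful maps $M_{0,5}\to M_{0,4}$ in a way that preserves the one forgetting $y$. Since the five forgetful maps are pairwise distinct (they correspond to the five labels, permuted simply transitively by the labeling), $\sigma$ must fix $y$. This shows $I\simeq\mathbb{S}_{4}$. Uniqueness of $h$ holds because $\pr^{2/1}_{X}$ is surjective, so $h\circ\pr^{2/1}_{X}$ determines $h$. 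The resulting map $u$ is the natural surjection $\mathbb{S}_{4}\surjto\mathbb{S}_{3}=\Aut(M_{0,4})$, whose kernel is the Klein four-group $\Z/2\Z\times\Z/2\Z$. As a sanity check I will write the three nontrivial elements of $J$ explicitly as fiberwise Möbius maps in $y$ with coefficients in $k(x)$, for instance $(x,y)\mapsto(x,x/y)$, which swaps $0\leftrightarrow\infty$ and $1\leftrightarrow x$.

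For (ii), each $j\in J$ satisfies $\pr^{2/1}_{X}\circ j=\pr^{2/1}_{X}$. By the discussion ``Virtual Curves'' in \cref{Notations and Terminologies}, $j$ therefore induces an automorphism of the virtual variety $[\pr^{2/1}_{X},s]$ that is the identity on the base $X$ and hence preserves $s$. Consequently $J$ acts on $\cusp([\pr^{2/1}_{X},s])$. To compute this action, I use \cref{virtual_inertia_and_real_inertia} to identify $\cusp([\pr^{2/1}_{X},s])$ with $\cusp(\pr^{2/1}_{X},\Pi_{X_{2}})$, i.e.\ with the cusps of the family. Those cusps are the four boundary sections $y=0,1,\infty,x$ of the compactified family $\mathbb{P}^{1}_{X}\to X$, and $J$-equivariance follows from the functoriality of the construction of geometric cusps. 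It follows that $j$ acts on these four cusps exactly by its permutation of the labels $\{0,1,\infty,x\}$. Since $J\subset\mathbb{S}_{4}$ acts faithfully on $\{0,1,\infty,x\}$, the action is faithful, and its image is the normal Klein subgroup, i.e.\ the intersection of the $2$-Sylow subgroups of $\mathbb{S}_{4}$.

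The main obstacle is making the identification of the abstract group-theoretic cusp set with the four geometric boundary sections equivariant for automorphisms of the virtual curve. \cref{virtual_inertia_and_real_inertia} identifies the sets, but transport of structure has to be checked; I would handle this by a naturality argument on decomposition groups in $\Pi_{X_{2}}$. A secondary point is the input $\Aut(M_{0,5})=\mathbb{S}_{5}$, which I will cite as classical rather than prove.
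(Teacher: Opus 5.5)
Your proposal is correct, and it is more self-contained than the paper's argument, which consists only of citations. The paper uses [\cite{SZM}, Theorem 4.20(ii)(iii)], as it does elsewhere, to pass to a section arising from a rational point and to identify the cusps with those of the corresponding fibre. The symmetry computation itself ($I\simeq\mathbb{S}_{4}$, $J\simeq\Z/2\Z\times\Z/2\Z$, and the action on the four points) is imported from [\cite{SZM}, Lemma 6.4(v)(vi)(vii)].

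You instead get (i) conceptually from $X_{2}\simeq M_{0,5}$: $I$ is the stabiliser of the label $y$, and $J$ is the kernel of $\mathbb{S}_{4}\surjto\mathbb{S}_{3}$. You handle (ii) at the level of the family via \cref{virtual_inertia_and_real_inertia}, which is itself proved from the same Theorem 4.20. This way no rational point is needed, since the boundary sections $y=0,1,\infty,x$ live over all of $X$.

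The equivariance you flag as the main obstacle is essentially formal. Both sides of \cref{virtual_inertia_and_real_inertia} are sets of conjugacy classes of subgroups of the same group $\ker((\pr^{2/1}_{X})_{\ast})$, on which $j$ acts through $j_{\ast}$. The ambiguity by $\Pi_{[\pr^{2/1}_{X},s]}$-conjugation is harmless, because each of the four classes is $\Pi_{X_{2}}$-stable, each boundary component being a section over $X$.

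The price of your route is the external theorem $\Aut(M_{0,5})\simeq\mathbb{S}_{5}$, which you can avoid:
\begin{itemize}
\item An element of $J$ is determined by its restriction to the generic fibre, i.e.\ a M\"obius transformation over $k(x)$ permuting $\{0,1,\infty,x\}$.
\item Since the cross-ratio $x$ is transcendental over $k$, it is not one of the special values, so this stabiliser is exactly the Klein group.
\item Hence $|I|\leq 4\cdot|\Aut_{k}(X)|=24$, while the relabelings fixing $y$ already give $\mathbb{S}_{4}\subseteq I$.
\end{itemize}

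There are two small corrections. First, $\mathbb{S}_{5}$ does not permute the five forgetful maps simply transitively. What you need is that no other forgetful map has the same fibres as $\pr^{2/1}_{X}$, which holds because each of them is nonconstant on those fibres. Second, \cref{virtual_inertia_and_real_inertia} assumes $k$ is $l$-cyclotomically full, which is narrower than the stated hypothesis. This matches the implicit hypothesis behind the paper's own citation and is harmless in the applications, where $k$ is an AF.
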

\begin{proof}
    \cref{generic_symmetry_genus_0} follows immediately from [\cite{SZM}, Proposition 4.20(ii)(iii); \cite{SZM}, Lemma 6.4(v)(vi)(vii)].
\end{proof}



\begin{prop}\label{generic_symmetry_genus_1}
    Let $k$ be a field of characteristic zero; $X$ a once-punctured elliptic curve over $k$ (i.e., a hyperbolic curve over $k$ of type $(1,1)$); $Y$ the unique smooth compactification of $X$ up to canonical isomorphism. Regard $Y$ as an abelian variety over $k$ by taking the unique $k$-rational cusp of $X$ to be the identity. Then the following assertions hold:

    (i) The natural group homomorphism $\Delta_{X}\to \Delta_{Y}$ induced by the open immersion $X\injto Y$ induces an isomorphism
    \begin{displaymath}
        (\Delta_{X})^{\abt}\simto \Delta_{Y}
    \end{displaymath}
    (cf. \cref{Notations and Terminologies}, the discussion entitled ``Groups and Topologies'').
    In particular, $\Delta_{Y}$ is abelian and torsion-free. Moreover, the open immersion $X\injto Y$ induces an isomorphism
    \begin{displaymath}
        \Delta_{X}/\cusp(X)\simto \Delta_{Y}
    \end{displaymath}
    (cf. \cref{Notations and Terminologies}, the discussion entitled ``Groups and Topologies'').

    (ii) Let $f:X\to X$ be an automorphism of the $k$-scheme $X$. Assume that the restriction $f_{\ast}|_{(\Delta_{X})^{\abt}}$ of $f_{\ast}$ to the torsion-free abelianization $(\Delta_{X})^{\abt}$ of the geometric fundamental group is the identity, up to an inner automorphism of $\Pi_{X}$. Then $f$ is the identity.

    (iii) Let $f:Y\to Y$ be an automorphism of the $k$-scheme $Y$. Then $f$ is a translation (cf. \cref{Notations and Terminologies}, the discussion entitled ``Fields, Schemes and Curves'') if and only if the restriction $f_{\ast}|_{\Delta_{Y}}$ of $f_{\ast}$ to the geometric fundamental group $\Delta_{Y}$ is the identity, up to an inner automorphism of $\Pi_{Y}$.

    (iv) Let $a\in Y(k)$, $f:Y\to Y$ the automorphism of $Y$ defined by $x\mapsto a-x$. Then up to an inner automorphism of $\Pi_{Y}$, the restriction $f_{\ast}|_{\Delta_{Y}}$ of $f$ to the (abelian) geometric fundamental group coincides with the map $[-1]|_{\Delta(Y)}$ given by multiplication by $-1$.

    (v) The $2$-configuration space $X_{2}$ may be naturally regarded as an open subscheme of $Y\times_{k}Y$. Consider the automorphism $\sigma$ of the $k$-scheme $Y\times_{k}Y$ defined by $(x,y)\mapsto (x,x-y)$. Then $\sigma$ is of order $2$. Moreover, $\sigma$ induces an automorphism $\tau$ of $X_{2}$ via the natural open immersion $X_{2}\injto Y\times_{k}Y$ such that $\pr^{2/1}_{X}\circ \tau = \pr^{2/1}_{X}$.
    
    (vi) We maintain the notation of assertion (v). Assume further that $s:G_{k}\to \Pi_{X}$ is a section. Then $\tau$ may be regarded as an automorphism of $[\pr^{2/1}_{X},s]$, hence acts naturally on $\cusp([\pr^{2/1}_{X},s])$. This action of $\tau$ on $\cusp([\pr^{2/1}_{X},s])$ (which is a set of cardinality $2$) permutes the two elements.

    (vii) We maintain the notation of assertion (vi). Denote by $\delta_{\tau}:\Delta_{[\pr^{2/1}_{X},s]}\to \Delta_{[\pr^{2/1}_{X},s]}$ the automorphism of the geometric virtual fundamental group induced by $\tau$. Then, up to an inner automorphism of $\Pi_{[\pr^{2/1}_{X},s]}$, the restriction
    \begin{displaymath}
        \delta_{\tau}|_{(\Delta_{[\pr^{2/1}_{X},s]}/\cusp([\pr^{2/1}_{X},s]))^{\abt}}
    \end{displaymath}
    of $\delta_{\tau}$ to the torsion-free abelianization $(\Delta_{[\pr^{2/1}_{X},s]}/\cusp([\pr^{2/1}_{X},s]))^{\abt}$ of the quotient $\Delta_{[\pr^{2/1}_{X},s]}/\cusp([\pr^{2/1}_{X},s])$ (cf. \cref{Notations and Terminologies}, the discussion entitled ``Groups and Topologies'') coincides with the map 
    \begin{displaymath}
        [-1]_{(\Delta_{[\pr^{2/1}_{X},s]}/\cusp([\pr^{2/1}_{X},s]))^{\abt}}.
    \end{displaymath} 
    given by multiplication by $-1$.
\end{prop}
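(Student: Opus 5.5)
The plan is to treat the seven assertions in order, reducing (vi) and (vii) to the generic fibre of $\pr^{2/1}_{X}$, where (iv) applies.

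\textbf{Assertion (i).} I would pass to $\bar{k}$ and use the standard presentation of $\Delta_{X}$ as the free profinite group on generators $a,b$. In this presentation the cuspidal inertia subgroup is topologically generated by (a conjugate of) the commutator $[a,b]$. Hence the closed normal subgroup generated by $\cusp(X)$ is the closed normal subgroup generated by $[a,b]$, which is the closure of the commutator subgroup. This gives $\Delta_{X}/\cusp(X)\simto \Delta_{X}^{\ab}\cong \widehat{\Z}^{2}$, which is already torsion-free, so it equals $(\Delta_{X})^{\abt}$. By the usual description of fundamental groups of compactifications (filling in cusps kills inertia), the natural map $\Delta_{X}\surjto\Delta_{Y}$ identifies $\Delta_{Y}$ with this quotient. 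In particular $\Delta_{Y}$ is abelian and torsion-free, so the "up to inner automorphism" ambiguity disappears on $\Delta_{Y}$ in the later assertions.

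\textbf{Assertions (ii)--(iv).} Any $k$-automorphism $f$ of $X$ extends to $Y$ and fixes the unique cusp $0$. It is therefore a group automorphism of the elliptic curve $Y$. By (i), the hypothesis of (ii) says that $f$ acts trivially on $T(Y_{\bar{k}})=\Delta_{Y}$, hence trivially on $Y[n]$ for all $n$. A nontrivial automorphism of an elliptic curve fixing $0$ acts nontrivially on $Y[n]$ for $n\geq 3$ (e.g.\ by Serre's lemma, or directly since $\Aut(Y_{\bar{k}},0)$ is a finite group acting faithfully on the Tate module). Hence $f=\id$.

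For (iii), I write $f=t_{a}\circ\alpha$ with $\alpha$ fixing $0$. For the "if" direction I would show that translations act trivially on $\Delta_{Y}$. The covers $[n]:Y\to Y$ are compatible with translation by lifting $a$ to a point $b$ with $nb=a$ (after a finite extension, which is harmless for $\Delta$), and $t_{b}$ acts on $Y[n]=\ker[n]$ trivially. For the "only if" direction, $\alpha_{\ast}$ is then trivial on $\Delta_{Y}$, so $\alpha=\id$ by the argument of (ii).

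Assertion (iv) follows by writing $x\mapsto a-x$ as $t_{a}\circ[-1]$ and noting that $[-1]$ acts as $-1$ on the Tate module.

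\textbf{Assertion (v).} This is direct computation. We have $X_{2}=\{(x,y):x\neq 0,\ y\neq 0,\ x\neq y\}\subset Y\times_{k}Y$. The map $\sigma^{2}$ sends $(x,y)\mapsto(x,x-(x-y))=(x,y)$, so $\sigma$ is an involution, and $\sigma\neq\id$. The three conditions defining $X_{2}$ are permuted: $x-y\neq0$, $x-(x-y)=y\neq0$, and $x\neq0$ is preserved. Thus $\sigma$ restricts to an automorphism $\tau$ of $X_{2}$ over the first projection.

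\textbf{Assertions (vi) and (vii).} Here I would use \cref{virtual_inertia_and_real_inertia} together with the identification $\Delta_{[\pr^{2/1}_{X},s]}\simto\Pi_{(X_{2})_{\bar\xi}}$ from the proof of \cref{Galois_actions_on_D}, where $\xi$ is the generic point of $X$. Since $\tau$ lies over $\id_{X}$, it induces an automorphism of this identification, compatible up to inner automorphisms. The generic fibre is $Y_{\xi}\setminus\{0,x\}$, with $x$ the tautological $K(X)$-point, so its cusps are $0$ and $x$. On this fibre $\tau$ acts by $y\mapsto x-y$, which swaps $0$ and $x$; this gives (vi).

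For (vii), the quotient by $\cusp$ is the geometric fundamental group of the elliptic curve $Y_{\xi}$ over $K(X)$. Applying (i) and (iv) over the characteristic-zero field $K(X)$ with $a=x$ yields multiplication by $-1$.

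The main obstacle is making precise that the identification of $\Delta_{[\pr^{2/1}_{X},s]}$ with the generic-fibre group is $\tau$-equivariant up to inner automorphisms of $\Pi_{[\pr^{2/1}_{X},s]}$. I would handle this by first reducing, via [\cite{SZM}, Theorem 4.20(ii)], to the case where $s$ arises from a rational point, and then using the functoriality of the diagram in the proof of \cref{Galois_actions_on_D} with respect to automorphisms of $X_{2}$ over $X$.
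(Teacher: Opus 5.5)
Your proposal is correct, apart from one claim about inner automorphisms noted in the last paragraph. It departs from the paper's route only in (vi)--(vii).

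\textbf{Assertions (i)--(v).} Here you follow the paper. Assertion (i) comes from the standard presentation of the once-punctured torus group. Assertion (ii) uses the identification $\Delta_{Y}\cong T(Y_{\bar{k}})$, functorial in automorphisms, together with the fact that an automorphism fixing $0$ and all torsion points is trivial. For (iii) you show that translations act trivially on $\Delta_{Y}$; your relation $t_{a}\circ[n]=[n]\circ t_{b}$ is the paper's remark that elements of $\Delta_{Y}$ arise from translations and $Y$ is commutative. You then reduce to an automorphism fixing $0$. Assertion (iv) comes from writing the map as $t_{a}\circ[-1]$. In (iii) your labels ``if'' and ``only if'' are swapped, but both directions are covered.

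\textbf{Assertions (vi)--(vii).} The paper treats (vi) as a basic property of abelian varieties. For (vii) it reduces, via [\cite{SZM}, Theorem 4.20(ii)], to the case where $s$ comes from a $k$-rational point $a$. It then uses [\cite{SZM}, Corollary 3.8; \cite{SZM}, Theorem 4.20(iii)] to identify $\Pi_{[\pr^{2/1}_{X},s]}$ and its cusps with those of the closed fibre $Y\setminus\{0,a\}$ over $k$. On that fibre $\tau$ is $y\mapsto a-y$, so (i) and (iv) apply over $k$ itself.

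You instead work on the generic fibre $Y_{K(X)}\setminus\{0,x\}$ with the tautological point $x$. You use $\Delta_{[\pr^{2/1}_{X},s]}\cong\Pi_{(X_{2})_{\bar{\xi}}}$ and the identification of cusps with those of the generic fibre (proof of \cref{Galois_actions_on_D}; \cref{virtual_inertia_and_real_inertia}), then apply (iv) over $K(X)$. This is valid, and in one respect cleaner. Since $\Delta_{[\pr^{2/1}_{X},s]}=\ker(\Pi_{X_{2}}\to\Pi_{X})$ and $\tau$ lies over $\id_{X}$, nothing depends on $s$. The equivariance you call the main obstacle is just functoriality of the fibre sequence for automorphisms over $X$. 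So the reduction to a rational section that you also propose is unnecessary in your route.

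The paper's route, in exchange, identifies all of $\Pi_{[\pr^{2/1}_{X},s]}$, not just $\Delta$, with an honest fundamental group over $k$. That makes the ``up to inner automorphism of $\Pi_{[\pr^{2/1}_{X},s]}$'' bookkeeping immediate, at the cost of invoking the section-independence result. Both routes tacitly use the type-$(1,2)$ analogue of (i).

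\textbf{A correction on inner automorphisms.} You say that, since $\Delta_{Y}$ is abelian, the ``up to inner automorphism'' ambiguity disappears. This holds only for conjugation by elements of $\Delta_{Y}$. Conjugation by $\gamma\in\Pi_{Y}$ acts on $\Delta_{Y}\cong T(Y_{\bar{k}})$ through the Galois representation. Read literally, the hypotheses of (ii) and of the ``if'' direction of (iii) are then satisfied by $f=[-1]$ whenever $-1$ lies in the image of $G_{k}$ in $\Aut(\Delta_{Y})$. An example is the twist $ty^{2}=x^{3}+ax+b$ of a constant curve over $k=\mathbb{C}((t))$, where a generator of $G_{k}$ acts as $-1$.

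So these hypotheses must be read as saying that $f$ acts trivially on $T(Y_{\bar{k}})$ through its geometrically induced action. For $k$ an AF this is equivalent to asking it of a representative of $f_{\ast}$ that induces the identity on the slim group $G_{k}$. That is how the paper's proof reads them, and how they are used later (cf.\ condition (2) of \cref{1_1_is_constructible}(ii)). With this reading, your arguments for (ii) and (iii) go through.
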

\begin{proof}
    Assertion (i) follows immediately from well-known properties of the corresponding topological fundamental group (cf. also [\cite{CanLift}, Proposition 2.3(ii)]).

    Next, we consider assertion (ii). Although [\cite{SZM}, Proposition 4.6] only states that the natural isomorphism $\Pi_{A_{\bar{k}}}\simto \T(A_{\bar{k}})$ is compatible with the $G_{k}$-action, the proof in fact implies that this isomorphism is functorial with respect to arbitrary automorphisms of abelian varieties. It thus follows that $f$ acts trivially on the set of all torsion points of $X$. Consequently, since $X$ is a curve, we conclude that $f$ is the identity.

    Next, we consider assertion (iii). Note that the elements of the geometric fundamental group of an abelian variety arise from translations (cf. the proof of [\cite{SZM}, Proposition 4.6]). Consequently, since abelian varieties are commutative, any translation induces the identity map on $\Delta_{Y}$, up to an inner automorphism of $\Pi_{Y}$. 
    Thus, after applying a suitable translation, we may assume that $f$ stabilizes the identity element of $Y$. Consequently, assertion (iii) follows immediately from assertions (i) and (ii).

    Assertion (iv) follows immediately, in light of assertion (iii), from [\cite{SZM}, Proposition 4.6], as discussed in the proof of assertion (ii).
    
    Assertions (v) and (vi) follow immediately from well-known basic properties of abelian varieties. 
    
    Finally, we consider assertion (vii). By [\cite{SZM}, Theorem 4.20(ii)], we may assume without loss of generality that $s$ arises from a $k$-rational point $a$ of $X$. Regard $a$ as a $k$-rational point of $Y$. Then, by [\cite{SZM}, Corollary 3.8] and [\cite{SZM}, Theorem 4.20(iii)], assertion (vii) follows immediately from assertions (i) and (iv).



\end{proof}


\begin{theo}\label{reconstruction_of_section}
    Let $k$ be an AF; $X$ a hyperbolic curve over $k$; $s:G_{k}\to \Pi_{X}$ a section. Write $\diag_{X}\in \cusp([\pr^{2/1}_{X},s])$ for the conjugacy class of cuspidal inertia subgroups determined by the decuspidalization $[\pr^{2/1}_{X},s]\to [\pr^{1}_{X},s]$ of pointed virtual curves; $Y$ for the unique smooth compactification of $X$ up to canonical isomorphism; $D\subset X\times_{k}Y$ for the diagonal (i.e., the reduced closed subscheme determined by the image of $\id_{X}\times_{k}i:X\to X\times_{k}Y$, where $\id_{X}:X\simto X$ denotes the identity morphism, and $i:X\injto Y$ denotes the natural open immersion). Let $I\subset \Pi_{X_{2}}$ be a representative of $\diag_{X}$ in $I(\pr^{2/1}_{X},\Pi_{X_{2}})$, where we regard $\diag_{X}$ as an element of $\cusp(\pr^{2/1}_{X})$ (cf. \cref{Notations and Terminologies}, the discussion entitled ``Fundamental Groups''; \cref{virtual_inertia_and_real_inertia}). Write $J$ for the element of $D(\pr^{2/1}_{X},\Pi_{X_{2}})$ corresponding to $I$ (cf. \cref{Notations and Terminologies}, the discussion entitled ``Fundamental Groups''; \cref{virtual_inertia_and_real_inertia}). Then the following assertions hold:

    (i) The schemes $X_{2}$ and $X\times_{k}X$ may be regarded as open subschemes of $X\times_{k}Y$ via the natural open immersion $X\injto Y$. Moreover, the natural open immersion $X_{2}\injto X\times_{k}Y$ (respectively, $X\times_{k}X\injto X\times_{k}Y$) is the compactification of the family $\pr^{2/1}_{X}$ (respectively, $\pr^{1}_{X}$) of hyperbolic curves (cf. \cref{Notations and Terminologies}, the discussion entitled ``Fields, Schemes and Curves'').

    (ii) The normalizer $N_{\Delta_{[\pr^{2/1}_{X},s]}}(I)$ of $I$ in $\Delta_{[\pr^{2/1}_{X},s]}$ coincides with $I$.

    (iii) The group $J$ normalizes $I$.

    (iv) The morphisms $(\pr^{1}_{X})_{\ast}$ and $(\pr^{2}_{X})_{\ast}$ induce a natural outer isomorphism $\Delta_{X\times_{k}X}\simto \Delta_{X}\times \Delta_{X}$.

    (v) The group homomorphism $(\pr^{2/1}_{X})_{\ast}$ induces an outer isomorphism $J/I\simto\Pi_{X}$ (cf. (iii)). In particular, the subgroup $J\cap \Pi_{[\pr^{2/1}_{X},s]}$ maps surjectively onto $G_{k}$ under the natural surjection $\Pi_{[\pr^{2/1}_{X},s]}\surjto G_{k}$.

    (vi) The image $J'$ of $J$ in $\Pi_{X\times_{k}X}$, obtained via the natural open immersion $X_{2}\injto X\times_{k}X$, coincides, up to an inner automorphism of $\Pi_{X\times_{k}X}$, with the equalizer of 
    \begin{displaymath}
	\xymatrix{
		\Pi_{X\times_{k}X}\ar@<1ex>[r]^>>>>>{(\pr^{1}_{X})_{\ast}}\ar@<-1ex>[r]_>>>>>{(\pr^{2}_{X})_{\ast}}& \Pi_{X}
	},
    \end{displaymath}
    where, by a slight abuse of notation, ``$(\pr^1_X)_\ast$'' and ``$(\pr^2_X)_\ast$'' denote representatives of the 
    respective outer homomorphisms.
    Moreover, $(\pr^{1}_{X})_{\ast}$ and $(\pr^{2}_{X})_{\ast}$ induce outer isomorphisms $J'\simto\Pi_{X}$.

    (vii) The normalizer $N_{\Pi_{[\pr^{2/1}_{X},s]}}(I)$ of $I$ in $\Pi_{[\pr^{2/1}_{X},s]}$ coincides with $J\cap \Pi_{[\pr^{2/1}_{X},s]}$.

    (viii) Denote by $H\subset \Pi_{[\pr^{1}_{X},s]}$ the image of $N_{\Pi_{[\pr^{2/1}_{X},s]}}(I)$ in $\Pi_{[\pr^{1}_{X},s]}$ via the natural morphism of virtual fundamental groups induced by the decuspidalization $[\pr^{2/1}_{X},s]\to [\pr^{1}_{X},s]$ of pointed virtual curves (cf. (i)). Then the natural surjection $\Pi_{[\pr^{1}_{X},s]}\surjto G_{k}$ maps $H$ isomorphically onto $G_{k}$. Hence $H$ defines a Galois section. Note that $\Pi_{[\pr^{1}_{X},s]}$ is naturally isomorphic to $\Pi_{X}$ via $(\pr^{2}_{X})_{\ast}$ (cf. [\cite{SZM} Lemma 5.11(i)]). Therefore, one may regard $H$ as a section $s':G_{k}\injto \Pi_{X}$ of the natural surjection $\Pi_{X}\surjto G_{k}$. Then the section $s'$ coincides with the 
    section $s$ up to $\Pi_X$-conjugation.
\end{theo}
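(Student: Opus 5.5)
The plan is to work throughout with the genuine étale fundamental groups $\Pi_{X_2}\subset$ (via quotients) $\Pi_{X\times_k X}$, and to pass to the virtual fundamental groups only at the end. This passage is legitimate because \cref{virtual_inertia_and_real_inertia} identifies $\cusp([\pr^{2/1}_X,s])$ with $\cusp(\pr^{2/1}_X,\Pi_{X_2})$. Also, $\Pi_{[\pr^{2/1}_X,s]}$ is the inverse image of $s(G_k)$ under $(\pr^{2/1}_X)_\ast:\Pi_{X_2}\to\Pi_X$.

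\textbf{Parts (i)--(iv).} For (i), observe that $X\times_k Y\to X$ (first projection) is a smooth proper family of curves of genus $g(X)$. Its complement of $X_2$ is the divisor $D\cup (X\times_k(Y\setminus X))$, which is finite étale over $X$ of degree $r+1$, so it is the compactification of $\pr^{2/1}_X$. Similarly, only $X\times_k(Y\setminus X)$ is removed for $\pr^1_X$. Uniqueness of compactifications over a normal base (cf. \cref{Notations and Terminologies}) then gives (i).

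For (ii), I will apply \cref{normalizer} to $\mathcal{C}=[\pr^{2/1}_X,s]$. For (iii), the plan is to use that $I$ is, by definition, the kernel of $J\to\Pi_X$, hence is normal in $J$.

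For (iv), I will invoke the standard Künneth formula for geometric fundamental groups of products of curves in characteristic zero. Concretely, base change to $\bar k$ and use that $\Delta_{X\times X}\simeq\Delta_X\times\Delta_X$, which follows from the homotopy exact sequence of $\pr^1$ split by a section of the trivial family.

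\textbf{Parts (v) and (vi).} For (v), the point is that $D$ is the image of a section $X\to X\times_k Y$ of the proper family, and $D\simeq X$ via $\pr^1$. By purity and tameness in characteristic zero, the decomposition group $J$ of the generic point of $D$ (taken in the sense of $\tilde D$) sits in an extension $1\to I\to J\to\Pi_D\to 1$. The map $J/I\to \Pi_D\simeq\Pi_X$ coincides with $(\pr^{2/1}_X)_\ast$ restricted to $J$.

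I expect (v) to be the main obstacle. One has to match the abstract definition of $D(\pr^{2/1}_X,\Pi_{X_2})$ via the inverse system $\tilde D$ of cusps with the classical decomposition group of the divisor $D$. I would do this by passing to the finite étale coverings of $X$ (via [\cite{Hos3}, Proposition 2.3]) and checking that over each one the component of the inverse image of $D$ is a section. The surjectivity onto $G_k$ is then immediate from $\Pi_X\surjto G_k$ and the description $\Pi_{[\pr^{2/1}_X,s]}=(\pr^{2/1}_X)_\ast^{-1}(s(G_k))$.

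For (vi), I note that $I$ dies in $\Pi_{X\times_kX}$, since $D$ is a cusp of $X_2$ but not of $X\times_k X$. Hence $J'$ is the image of $J/I\simeq\Pi_D$ under the diagonal $\Delta:X\to X\times_kX$. Using (iv), one has $\Pi_{X\times_kX}\simeq\Pi_X\times_{G_k}\Pi_X$. Under this identification, the image of $\Delta_\ast$ is exactly $\{(a,a)\}$, which is the equalizer of the two projections, and each projection restricts to an isomorphism onto $\Pi_X$.

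\textbf{Parts (vii) and (viii).} For (vii), the inclusion $\supseteq$ comes from (iii). For $\subseteq$, take $g$ normalizing $I$. Then $gIg^{-1}$ is the inertia group of $g\cdot x$, where $x\in\tilde D$ is the point corresponding to $I$. Since distinct cusps have inertia groups meeting trivially (\cref{normalizer}, applied after passing to $\Delta$ and using the fibrewise description), we get $g\cdot x=x$, and therefore $g\in J$.

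For (viii), $H$ is the image in $\Pi_{[\pr^1_X,s]}=(\pr^1_X)_\ast^{-1}(s(G_k))\subset\Pi_{X\times_kX}$ of $J\cap\Pi_{[\pr^{2/1}_X,s]}$. By (vi), this image consists of the pairs $(a,a)$ with $a\in s(G_k)$. Projecting to $G_k$ is therefore an isomorphism, and applying $(\pr^2_X)_\ast$ recovers $s(G_k)$ up to the conjugation implicit in the choice of basepoints.
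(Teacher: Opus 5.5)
Your proposal is correct, and for (i)--(vi) and (viii) it runs along the same lines as the paper. The paper also gets (ii) from \cref{normalizer}, (iii) from $I=\ker(J\to\Pi_X)$, and (iv) from [\cite{SGA1}, Expos\'e XIII, Proposition 4.6]. It regards (i), (v), (vi) as immediate from the definitions; your extra detail there is fine. It proves (viii) exactly as you do, by intersecting the equalizer $J'$ with $\Pi_{[\pr^{1}_{X},s]}=(\pr^{1}_{X})_{\ast}^{-1}(s(G_k))$.

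The one genuine divergence is (vii).

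\emph{The paper's route} is purely group-theoretic. If $n\in\Pi_{[\pr^{2/1}_{X},s]}$ normalizes $I$, the surjectivity in (v) gives some $j\in J\cap\Pi_{[\pr^{2/1}_{X},s]}$ with the same image in $G_k$. Then $j^{-1}n\in N_{\Delta_{[\pr^{2/1}_{X},s]}}(I)=I\subseteq J$ by (ii) and (iii). This is exactly why (v) records the surjectivity onto $G_k$.

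\emph{Your route} uses the action on $\tilde{D}$ and the identity $gI_xg^{-1}=I_{gx}$. It gives the slightly stronger statement $N_{\Pi_{X_2}}(I)=J$. However, it needs the \emph{point-level} fact that distinct points of $\tilde{D}$ have trivially intersecting inertia groups. \cref{normalizer} as stated only compares distinct \emph{subgroups}, so on its own it cannot exclude $I_{gx}=I_x$ with $gx\neq x$. You genuinely need the identification of $\tilde{D}$ with the cusps of the universal covering of a geometric fibre, together with [\cite{CombGC}, Proposition 1.2].

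That fact is true and standard, so there is no gap. The paper's route is simply more economical, since (ii) and (v) are already available.
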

\begin{proof}
    Assertion (i) follows immediately from the various definitions involved. 
    Assertion (ii) follows immediately from \cref{normalizer}.
    Assertion (iii) follows immediately from the definitions of the notions of decomposition and inertia groups. Assertion (iv) follows immediately from [\cite{SGA1}, Expos\'e XIII, Proposition 4.6].
    Assertions (v) and (vi) follow immediately from the various definitions involved.
    Next, we consider assertion (vii). By assertion (iii), $J$ normalizes $I$. Consequently, $J\cap \Pi_{[\pr^{2/1}_{X},s]}$ also normalizes $I$. Hence, assertion (vii) follows immediately from assertions (ii) and (v).
    Finally, we consider assertion (viii). By assertion (vi), without loss of generality, we may choose $I$ such that the image $J'$ of $J$ in $\Pi_{X\times_{k}X}$ is the equalizer of $(\pr^{1}_{X})_{\ast}$ and $(\pr^{2}_{X})_{\ast}$. 
    By assertion (vii), $J'\cap \Pi_{[\pr^{1}_{X},s]} = H$. Recall that $(\pr^{2}_{X})_{\ast}$ induces a natural isomorphism $\Pi_{[\pr^{1}_{X},s]}\simto \Pi_{X}$. Regard $H$ as a subgroup of $\Pi_{X\times_{k}X}$. By definition, $(\pr^{1}_{X})_{\ast}(H)$ is equal to $s(G_{k})$ in $\Pi_{X}$. Assertion (viii) then follows immediately from the fact that $H$ is a subgroup of $J'$, and that $J'$ is the equalizer of $(\pr^{1}_{X})_{\ast}$ and $(\pr^{2}_{X})_{\ast}$.
\end{proof}

\begin{coro}\label{reconstruction_of_section_special_sym}
    Let $k^{\dagger}$, $k^{\ddagger}$ be AFs; $\bar{k}^{\dagger}$ (respectively, $\bar{k}^{\ddagger}$) an algebraic closure of $k^{\dagger}$ (respectively, $k^{\ddagger}$); $G_{k^{\dagger}}$ (respectively, $G_{k^{\ddagger}}$) the Galois group $\gal(\bar{k}^{\dagger}/k^{\dagger})$ (respectively, $\gal(\bar{k}^{\ddagger}/k^{\ddagger})$); $X^{\dagger}$ (respectively, $X^{\ddagger}$) a hyperbolic curve over $k^{\dagger}$ (respectively, $k^{\ddagger}$) of type $(g^{\dagger},r^{\dagger})$ (respectively, $(g^{\ddagger},r^{\ddagger})$); $s^{\dagger}:G_{k^{\dagger}}\rightarrow \Pi_{X^{\dagger}}$ (respectively, $s^{\ddagger}:G_{k^{\ddagger}}\rightarrow \Pi_{X^{\ddagger}}$) a section of the natural surjection $\Pi_{X^{\dagger}}\surjto G_{k^{\dagger}}$ (respectively, $\Pi_{X^{\ddagger}}\surjto G_{k^{\ddagger}}$). Write
    $\diag_{X^{\dagger}}\in \cusp([\pr^{2/1}_{X^{\dagger}}, s^{\dagger}])$ (respectively, $\diag_{X^{\ddagger}}\in \cusp([\pr^{2/1}_{X^{\ddagger}}, s^{\ddagger}])$) for the element determined by the decuspidalization $[\pr^{2/1}_{X^{\dagger}}, s^{\dagger}]\to [\pr^{1}_{X^{\dagger}}, s^{\dagger}]$ (respectively, $[\pr^{2/1}_{X^{\ddagger}}, s^{\ddagger}]\to [\pr^{1}_{X^{\ddagger}}, s^{\ddagger}]$). 
    Let $\theta: \Pi_{[\pr^{2/1}_{X^{\dagger}},s^{\dagger}]}\simto \Pi_{[\pr^{2/1}_{X^{\ddagger}},s^{\ddagger}]}$ be an isomorphism of profinite groups. Then the following assertions hold:

    (i) The type $(g^{\dagger}, r^{\dagger})$ coincides with the type $(g^{\ddagger}, r^{\ddagger})$. Moreover, $\theta$ induces an 
    isomorphism $G_{k^{\dagger}}\simto G_{k^{\ddagger}}$, as well as a bijection $\cusp([\pr^{2/1}_{X^{\dagger}}, s^{\dagger}])\simto \cusp([\pr^{2/1}_{X^{\ddagger}}, s^{\ddagger}])$.

    (ii) Suppose that the action induced by $\theta$ on the conjugacy classes of cuspidal inertia subgroups maps $\diag_{X^{\dagger}}$ to $\diag_{X^{\ddagger}}$. Then, after possibly composing $\theta$ with an inner automorphism 
    of its domain/codomain, $\theta$ induces an isomorphism $\alpha:\Pi_{X^{\dagger}}\simto \Pi_{X^{\ddagger}}$ that arises from an isomorphism $X^{\dagger}\simto X^{\ddagger}$, and, moreover, satisfies the equality $\alpha(s^{\dagger}(G_{k^{\dagger}}))=s^{\ddagger}(G_{k^{\ddagger}})$.

    (iii) Suppose that $(g^{\dagger}, r^{\dagger}) = (0,3)$ and that the natural action of $\Pi_{[\pr^{2/1}_{X^{\dagger}},s^{\dagger}]}$ on $\cusp([\pr^{2/1}_{X^{\dagger}},s^{\dagger}])$ is trivial. 
    Then, after possibly composing $\theta$ with an automorphism arises from an element in the group ``$J$'' as in \cref{generic_symmetry_genus_0}(ii),
    $\theta$ induces an isomorphism $\alpha:\Pi_{X^{\dagger}}\simto \Pi_{X^{\ddagger}}$ that arises from an isomorphism $X^{\dagger}\simto X^{\ddagger}$, and, moreover, satisfies the equality $\alpha(s^{\dagger}(G_{k^{\dagger}}))=s^{\ddagger}(G_{k^{\ddagger}})$.

    (iv) Suppose that $(g^{\dagger}, r^{\dagger}) = (1,1)$. 
    Then, after possibly composing $\theta$ with an automorphism ``$\tau$'' as in \cref{generic_symmetry_genus_1}(vi), $\theta$ induces an isomorphism $\alpha:\Pi_{X^{\dagger}}\simto \Pi_{X^{\ddagger}}$ that arises from an isomorphism $X^{\dagger}\simto X^{\ddagger}$, and, moreover, satisfies the equality $\alpha(s^{\dagger}(G_{k^{\dagger}}))=s^{\ddagger}(G_{k^{\ddagger}})$.

    (v) Suppose that $(g^{\dagger}, r^{\dagger}) = (g,0)$, where $g\geq 2$. 
    Then, after possibly composing $\theta$ with an inner automorphism 
    of its domain/codomain, $\theta$ induces an isomorphism $\alpha:\Pi_{X^{\dagger}}\simto \Pi_{X^{\ddagger}}$ that arises from an isomorphism $X^{\dagger}\simto X^{\ddagger}$, and, moreover, satisfies the equality $\alpha(s^{\dagger}(G_{k^{\dagger}}))=s^{\ddagger}(G_{k^{\ddagger}})$.
\end{coro}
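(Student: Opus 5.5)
The plan is to reduce everything to assertion (ii). Assertion (ii) will in turn be derived from \cref{reconstruction_of_section}(vii)(viii) together with the anabelian results recalled in \cref{anabelian_CAVC}.

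For assertion (i), I would first show that $\theta$ maps $\Delta^{\dagger}\coloneqq\Delta_{[\pr^{2/1}_{X^{\dagger}},s^{\dagger}]}$ onto $\Delta^{\ddagger}\coloneqq\Delta_{[\pr^{2/1}_{X^{\ddagger}},s^{\ddagger}]}$. In the NF case this is [\cite{SZM}, Theorem 5.1]. In the MLF case I would invoke the corresponding group-theoretic characterization of the geometric virtual fundamental group in [\cite{SZM}], which I expect to be the main technical point of (i). Once $\theta$ is known to be an isomorphism of inclusions of CAVC-type, \cref{CAVC_universal_prop} gives the following:
\begin{itemize}
\item the types of the virtual curves coincide; these types are $(g^{\dagger},r^{\dagger}+1)$ and $(g^{\ddagger},r^{\ddagger}+1)$, so $(g^{\dagger},r^{\dagger})=(g^{\ddagger},r^{\ddagger})$;
\item $\theta$ induces a bijection on $\cusp$;
\item passing to $\Pi/\Delta$, $\theta$ induces $G_{k^{\dagger}}\simto G_{k^{\ddagger}}$.
\end{itemize}

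For assertion (ii), after composing $\theta$ with an inner automorphism, I may assume that $\theta(I^{\dagger})=I^{\ddagger}$ for chosen representatives of $\diag_{X^{\dagger}}$ and $\diag_{X^{\ddagger}}$. The argument then runs in four steps.
\begin{enumerate}
\item Since $\theta$ maps the closed normal subgroup generated by the conjugates of $I^{\dagger}$ onto the one generated by the conjugates of $I^{\ddagger}$, it induces an isomorphism $\Pi_{[\pr^{1}_{X^{\dagger}},s^{\dagger}]}\simto\Pi_{[\pr^{1}_{X^{\ddagger}},s^{\ddagger}]}$. Composing with the identifications via $(\pr^2)_{\ast}$ of [\cite{SZM}, Lemma 5.11(i)] yields $\alpha:\Pi_{X^{\dagger}}\simto\Pi_{X^{\ddagger}}$.
\item The map $\alpha$ is compatible with the projections to $G_{k^{\dagger}}\simto G_{k^{\ddagger}}$ from (i), so it preserves $\Delta_{X}$.
\item By \cref{anabelian_CAVC}(i), applied to the two GDs $(X^{\dagger},\mathrm{id})$ and $(X^{\ddagger},\alpha^{-1})$ of $\Delta_{X^{\dagger}}\subset\Pi_{X^{\dagger}}$, the map $\alpha$ arises from a unique scheme isomorphism $X^{\dagger}\simto X^{\ddagger}$.
\item Since $\theta(I^{\dagger})=I^{\ddagger}$, $\theta$ maps $N_{\Pi}(I^{\dagger})$ onto $N_{\Pi}(I^{\ddagger})$. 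By \cref{reconstruction_of_section}(viii), the images of these normalizers in $\Pi_{X}$ are $s^{\dagger}(G_{k^{\dagger}})$ and $s^{\ddagger}(G_{k^{\ddagger}})$, up to conjugation. Adjusting by a further inner automorphism then gives $\alpha(s^{\dagger}(G_{k^{\dagger}}))=s^{\ddagger}(G_{k^{\ddagger}})$.
\end{enumerate}

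For assertions (iii)--(v), it suffices to arrange that the cusp bijection of (i) sends $\diag_{X^{\dagger}}$ to $\diag_{X^{\ddagger}}$, after which (ii) applies.
\begin{itemize}
\item In case (v), $r=0$, so $\cusp([\pr^{2/1}_{X},s])$ consists of the single element $\diag_X$, and there is nothing to prove.
\item In case (iv), the cusp set has two elements. By \cref{generic_symmetry_genus_1}(vi), $\tau$ regarded as an automorphism of $\Pi_{[\pr^{2/1}_{X^{\ddagger}},s^{\ddagger}]}$ swaps them. Hence either $\theta$ or $\tau_{\ast}\circ\theta$ sends $\diag$ to $\diag$.
\item In case (iii), the cusp set has four elements. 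By the triviality hypothesis, the $\Pi$-action does not interfere with the correspondence. By \cref{generic_symmetry_genus_0}(ii), $J$ acts through the Klein four-group, which acts transitively on four letters, so some $j\in J$ satisfies that $j_{\ast}\circ\theta$ maps $\diag_{X^{\dagger}}$ to $\diag_{X^{\ddagger}}$.
\end{itemize}

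A minor point to check in (iii) and (iv) is that these symmetries induce automorphisms of the virtual fundamental group only up to inner automorphism. This is harmless, since (ii) already allows inner adjustments and the cusp action is insensitive to them.

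The main obstacle is the MLF case in (i): showing that an arbitrary isomorphism $\theta$ preserves $\Delta$, so that \cref{CAVC_universal_prop} applies at all. If no such characterization is available in [\cite{SZM}] for MLFs, I would try to characterize $\Delta$ intrinsically. One option is as the maximal closed normal subgroup that is topologically finitely generated. Another is via the cyclotomic character and the conjugacy classes of cuspidal inertia subgroups.
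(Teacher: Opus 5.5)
Your proposal is correct and follows the paper's proof essentially step for step: (i) comes from the group-theoretic recovery of $\Delta\subset\Pi$ together with \cref{CAVC_universal_prop}, and the MLF case you flag as the main obstacle is handled in the paper purely by citation, namely [\cite{SZM}, Theorem 5.1; \cite{SZM}, Theorem 5.13; \cite{NSW}, Theorem 7.4.1]; (ii) is exactly your argument via the quotient by $\diag$, the splitting identification with $\Pi_{X}$, \cref{anabelian_CAVC}(i), and the comparison of normalizers through \cref{reconstruction_of_section}(viii); and (iii)--(v) are reduced to (ii) using $J$, $\tau$, and the uniqueness of $\diag$, just as you propose. Two small caveats: your fallback characterization of $\Delta$ as the maximal topologically finitely generated closed normal subgroup would fail precisely in the MLF case, since there $G_{k}$, hence $\Pi$ itself, is topologically finitely generated; and in (iii) the real role of the triviality hypothesis is to force the three cusps of $X^{\dagger}$ (and, via $\theta$, of $X^{\ddagger}$) to be rational, i.e.\ $X^{\dagger}\cong\mathbb{P}^{1}\setminus\{0,1,\infty\}$, so that \cref{generic_symmetry_genus_0} applies and $J$ with its transitive Klein-group action is available (the paper invokes [\cite{SZM}, Proposition 6.8(iv)] at this point).
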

\begin{proof}
    Assertion (i) follows immediately from \cref{CAVC_universal_prop}, \cref{cls_diff_uncls}(i)(ii), and [\cite{SZM}, Theorem 5.1; \cite{SZM}, Theorem 5.13; \cite{NSW}, Theorem 7.4.1]. 
    Next, we consider assertion (ii). By construction, the morphism $\theta$ induces an isomorphism
    \begin{displaymath}
        \theta':\Pi_{[\pr^{2/1}_{X^{\dagger}},s^{\dagger}]}/\{\diag_{X^{\dagger}}\} \simto  \Pi_{[\pr^{2/1}_{X^{\ddagger}},s^{\ddagger}]}/\{\diag_{X^{\ddagger}}\}.
    \end{displaymath}
    which in turn induces an isomorphism
    \begin{displaymath}
        \theta'':\Pi_{[\pr^{1}_{X^{\dagger}},s^{\dagger}]} \simto  \Pi_{[\pr^{1}_{X^{\ddagger}},s^{\ddagger}]}.
    \end{displaymath}
    Observe that $[\pr^{1}_{X^{\dagger}},s^{\dagger}]$ and $[\pr^{1}_{X^{\ddagger}},s^{\ddagger}]$ are split pointed virtual curves that admit respective splitting morphisms to $X^{\dagger}$ and $X^{\ddagger}$ (cf. \cref{constructible_trivial_family}). Therefore, $\theta''$ may be regarded as an isomorphism
    \begin{displaymath}
        \theta''':\Pi_{X^{\dagger}} \simto  \Pi_{X^{\ddagger}}.
    \end{displaymath}
    By assertion (i) and \cref{anabelian_CAVC}(i), this yields an isomorphism $X^{\dagger}\simto X^{\ddagger}$.
    Let $I\subset \Delta_{[\pr^{2/1}_{X^{\dagger}},s^{\dagger}]}$ be a representative of $\diag_{X^{\dagger}}$. Then $\theta(I)$ serves as a representative of $\diag_{X^{\ddagger}}$. Furthermore, we have
    \begin{displaymath}
        \theta\big(N_{\Pi_{[\pr^{2/1}_{X^{\dagger}},s^{\dagger}]}}(I)\big) = N_{\Pi_{[\pr^{2/1}_{X^{\ddagger}},s^{\ddagger}]}}(\theta(I)).
    \end{displaymath}
    Denote by $H$ (respectively, $H'$) the natural image of $I$ (respectively, $\theta(I)$) in $\Pi_{X^{\dagger}}$ (respectively, $\Pi_{X^{\ddagger}}$). Then $\theta'''(H) = H'$. Therefore, the remaining part of assertion (ii) follows immediately from \cref{reconstruction_of_section}(viii). 
    Assertion (iii) follows immediately from assertion (ii), \cref{generic_symmetry_genus_0}(ii), and [\cite{SZM}, Proposition 6.8(iv)].
    Assertion (iv) follows immediately from assertion (ii) and \cref{generic_symmetry_genus_1}(vi).
    Assertion (v) follows immediately from assertion (ii), by observing that $\diag_{X^{\dagger}}$ (respectively, $\diag_{X^{\ddagger}}$) is the unique element of $\cusp([\pr^{2/1}_{X^{\dagger}}, s^{\dagger}])$ (respectively, $\cusp([\pr^{2/1}_{X^{\ddagger}}, s^{\ddagger}])$).
\end{proof}

\begin{lemm}\label{devissage_of_section}
    Let $k$ be an AF; $X$ a hyperbolic curve over $k$; $s:G_{k}\injto \Pi_{X}$ a Galois section. Then the following assertions hold:

    (i) Let $k'$ be a finite field extension of $k$. Suppose that the restriction $s':G_{k'}\injto \Pi_{X\times_{k}k'}$ of $s$ to $G_{k'}$ arises from a $k'$-rational point. Then $s$ arises from a $k$-rational point.

    (ii) Suppose that $f:X\to Y$ is an open immersion, where $Y$ is a hyperbolic curve over $k$. Assume further that the composite $f_{\ast}\circ s$ arises from a $k$-rational point. Then $s$ arises from a $k$-rational point.
\end{lemm}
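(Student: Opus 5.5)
We prove (i) by descending the rational point, using the rigidity of decomposition groups over an AF. We prove (ii) by the limit-over-neighbourhoods characterization of geometric sections. Throughout, ``arises from a rational point'' is understood in the usual sense: $s(G_k)$ is a decomposition group of a rational point of $X^{\cl}$, which may be a cusp. The main input we take from the literature is that over an AF, the map $\{\text{rational points of }X^{\cl}\}\to\{\text{conjugacy classes of sections}\}$ is injective. We also use that decomposition groups of closed points of $X$, and of cusps, are commensurably terminal in $\Pi_X$ (cf. [\cite{MZK3}]).

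For (i), we first replace $k'$ by a finite Galois extension of $k$ containing it. This is harmless, since the restriction of a section arising from a rational point again arises from a rational point. We may then assume that $s(G_{k'})$ is normal in $s(G_k)$. Let $x'\in X^{\cl}(k')$ be the point with $s(G_{k'})=D_{x'}$. For $\sigma\in G_k$, conjugation by $s(\sigma)$ sends $D_{x'}$ to a decomposition group of $\sigma(x')$. On the other hand, it preserves $s(G_{k'})$ by normality. By the injectivity recalled above, $\sigma(x')=x'$, so $x'$ descends to a $k$-rational point $x$. The containment $s(G_k)\subset N_{\Pi_X}(D_{x'})$ follows from normality. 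We also have $N_{\Pi_X}(D_{x'})\subset C_{\Pi_X}(D_{x'})=D_x$ by commensurable terminality, with $D_x$ the decomposition group of $x$ containing $D_{x'}$. Both $s(G_k)$ and $D_x$ map isomorphically onto $G_k$, so they coincide, and $s$ arises from $x$.

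For (ii), we use the standard criterion: $s$ arises from a rational point if and only if, for every open subgroup $H\subset\Pi_X$ containing $s(G_k)$, the corresponding cover $X_H$ (over $k$) satisfies $X_H^{\cl}(k)\neq\emptyset$. Given this, a compactness argument over the inverse system of the finite nonempty sets of $k$-rational points produces a compatible system of points whose decomposition group contains, hence equals, $s(G_k)$.

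We now verify nonemptiness. Write $y\in Y^{\cl}(k)$ for the point with $f_\ast s(G_k)=D_y$. Given $H$, let $H_Y\coloneqq f_\ast(H)$, which is open and contains $D_y$. Then $y$ lifts to a $k$-rational point $y_H$ of $Y_{H_Y}^{\cl}$. The cover $X_H\to X$ factors through $X\times_Y Y_{H_Y}$, and its compactification maps to $Y_{H_Y}^{\cl}$.

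We expect the main obstacle to be the following claim: some point of $X_H^{\cl}$ over $y_H$ is $k$-rational. This is where the ramification of $X_H$ at points of $Y\setminus X$ enters. We would handle it with the reverse inclusion, working inside $\Pi_X$. The kernel of $f_\ast$ is normally generated by the cuspidal inertia groups of $Y\setminus X$ (cf. [\cite{SZM}, Theorem 4.20(iii)]). Hence the preimage of $D_y$ in $\Pi_X$ is generated by $s(G_k)$ together with these inertia groups. By [\cite{MZK3}, Proposition A.8] this preimage is a union of decomposition groups of points of $X^{\cl}$ over $y$. Among these, the decomposition group of a point over $y_H$ that contains $s(G_k)$ gives a rational point of $X_H^{\cl}$ over $y_H$. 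If this step resists, we would instead reduce, via (i) and an étale base change, to the case where $y$ and all of $Y\setminus X$ are rational, and argue cusp by cusp.
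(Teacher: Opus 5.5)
\textbf{Part (i)} is fine and follows the paper's route. The paper simply invokes the injectivity portion of the section conjecture ([\cite{MZK8}, Theorem C]); your Galois descent plus commensurable terminality is the routine way of extracting (i) from it. One correction: a cuspidal decomposition group is an extension of $G_k$ by $\widehat{\Z}(1)$, not a section. So for cusps, ``arises from $x$'' must mean $s(G_k)\subseteq D_x$, not equality. Your argument survives with containment in place of equality once you note that a section lies in at most one cuspidal decomposition group. This again follows from injectivity, applied to a neighbourhood $X_H$ of $s(G_k)$ that separates the two cusps.

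\textbf{Part (ii)} has a genuine gap, exactly at the step you flagged. The limit criterion is standard (over an MLF the sets $X_H^{\cl}(k)$ are compact rather than finite, which still suffices). But it only relocates the problem: all of the content of (ii) lies in producing a $k$-rational point of $X_H^{\cl}$, and your argument for this fails in two places.
\begin{enumerate}
\item The preimage $E\coloneqq f_\ast^{-1}(D_{\tilde y})$ is not a union of decomposition groups of points over $y$. If $y\in X(k)$, each such decomposition group maps isomorphically onto $G_k$, so it meets $N\coloneqq\ker(f_\ast)$ trivially, whereas $N\subseteq E$. Moreover, [\cite{MZK3}, Proposition A.8] concerns cuspidal inertia and kernels of maps induced by open immersions (this is how the paper uses it for \cref{well_defined_for_kernel_cusp}); it says nothing about which sections of $E$ are geometric.
\item Choosing ``the decomposition group \ldots\ that contains $s(G_k)$'' presupposes that $s(G_k)$ lies in some decomposition group, which is the conclusion.
\end{enumerate}

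Your fallback does not help either: reducing via (i) to rational $y$ and rational $Y\setminus X$ leaves the core case untouched. Take $X=Y\setminus\{z\}$ with $y\neq z$ in $Y(k)$, so $y\in X(k)$. By injectivity for $Y$, the only geometric sections of $\Pi_X$ inside $E$ are the $N$-conjugates of a single decomposition group $D$ of $y$ in $\Pi_X$, and $E=N\rtimes D$. Hence (ii) in this case says precisely that every section of $E\surjto G_k$ is $N$-conjugate to $D$. Equivalently, the nonabelian cohomology set $H^1(G_k,N)$ is trivial, where $N$ is the very large, non-finitely-generated normal closure of the inertia groups above $z$, with $G_k$ acting through $D$.

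Nothing in your sketch (injectivity, commensurable terminality, or the description of $N$ as normally generated by cuspidal inertia) bears on this triviality; it is a substantial theorem. This is exactly the input the paper does not prove but imports from [\cite{RNSPM}, Theorem F(ii)], together with the discussion of the NF case following that result. Without it, or an argument of comparable depth, your proof of (ii) is incomplete.
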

\begin{proof}
    Assertion (i) follows from the injectivity portion of the Section Conjecture, e.g., as given in [\cite{MZK8}, Theorem C].
    Assertion (ii) follows from [\cite{RNSPM}, Theorem F(ii)], together with the discussion of the number field case immediately following the statement of [\cite{RNSPM}, Theorem F(ii)].
\end{proof}

\begin{theo}\label{thmb_simp}
    Let $k$ (respectively, $k'$) be an AF; $\bar{k}$ (respectively, $\bar{k}'$) an algebraic closure of $k$ (respectively, $k'$); $G_{k}$ (respectively, $G_{k'}$) the Galois group $\gal(\bar{k}/k)$ (respectively, $\gal(\bar{k}'/k')$); $X$ (respectively, $X'$) a hyperbolic curve over $k$ (respectively, $k'$) of type $(0,3)$ (respectively, type $(1,3)$); $s:G_{k}\rightarrow \Pi_{X}$ (respectively, $s':G_{k'}\rightarrow \Pi_{X'}$) a section of the natural surjection $\Pi_{X}\surjto G_{k}$ (respectively, $\Pi_{X'}\surjto G_{k'}$). Write $\Pi$ (respectively, $\Pi'$) for the virtual fundamental group $\Pi_{[\pr^{2/1}_{X},s]}$ (respectively, $\Pi_{[\pr^{2/1}_{X'},s']}$); $\Delta$ (respectively, $\Delta'$) for the geometric virtual fundamental group $\Delta_{[\pr^{2/1}_{X},s]}$ (respectively, $\Delta_{[\pr^{2/1}_{X'},s']}$). Suppose that there exists a group injection $f:\Pi'\injto \Pi$ of index $2$ such that $f^{-1}(\Delta)=\Delta'$ (where we note that in the case where $k$ is an NF, the assumption $f^{-1}(\Delta)=\Delta'$ is satisfied automatically by [\cite{SZM}, Theorem 5.1]), and that the induced injection $f_{\Delta}:\Delta'\injto \Delta$ is of index $2$. Assume further that $X\simto \mathbb{P}^{1}_{k}\setminus\{0,1,\infty\}$, and that the automorphisms of $\Pi$ induced by elements of the group ``$J$'' appearing in \cref{generic_symmetry_genus_0}(ii) stabilize the image of $f$. Then the following assertions hold:

    (i) $s'$ arises from a $k'$-rational point.

    (ii) The inclusion $\Delta'\injto \Pi'$ is constructible (cf. \cref{defi_constructible_inclusion}(ii)).

    (iii) The inclusion $\Delta\injto \Pi$ is constructible.

    (iv) $s$ arises from a $k$-rational point.
\end{theo}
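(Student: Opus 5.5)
We first analyze the combinatorics of cusps. Here $\Delta\subset\Pi$ is of type $(0,4)$: the fiber of $\pr^{2/1}_X$ is $X$ with one more point removed. Similarly $\Delta'\subset\Pi'$ is of type $(1,4)$. Since $f_{\Delta}$ has index $2$, \cref{covering_group_theoretic} applies: $\cusp(\Pi')$ is the set of conjugacy classes induced from $\cusp(\Pi)$ via $f_{\Delta}$. The Riemann--Hurwitz formula for a degree $2$ cover of a genus $0$ curve by a genus $1$ curve, $0 = 2(-2)+\#\mathrm{ram}$, forces exactly $4$ ramification points upstairs. With $4$ cusps downstairs and $4$ cusps upstairs, the only possibility is that all four cusps of $\Delta$ are branch points, each with a single preimage. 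So the geometric covering is the classical double cover of $\mathbb{P}^1$ branched at the four points, an elliptic curve $E$ minus its $2$-torsion-type points.

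Next we use the symmetry hypothesis. The Klein group $J$ of \cref{generic_symmetry_genus_0}(ii) acts on $\Pi$, stabilizes $f(\Pi')$, and acts on $\cusp(\Pi)$ as the Klein four-group. Choose a cusp $\alpha$ of $\Pi'$ lying over $\diag_X$ and consider the decuspidalization of $\Pi'$ removing the three other cusps. This is CAVC-admissible, so \cref{decuspidalization-group-theoretic} applies and yields a type $(1,1)$ object in the virtual decuspidaloid. Using \cref{generic_symmetry_genus_1}(v)--(vii), we want to identify the resulting structure, namely the virtual curve over $X'$ with fiber $E\setminus\{\text{pt}\}$, with a split family. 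Concretely, the four cusps of $\Delta'$ correspond to the $2$-torsion translates of a single cusp. The elements of $J$ lift to translations by $2$-torsion points (acting trivially on $\Delta'^{\abt}$ modulo cusps), while the covering involution acts as $[-1]$.

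We then argue that the family $E\to X'$, modulo these symmetries, is isotrivial and split. The fibers are double covers of $\mathbb{P}^1$ branched at $\{0,1,\infty,x\}$, and their Legendre $\lambda$-invariant varies with $x$. So the family cannot be split in general. The right approach instead is to show directly that $s'$ comes from a point, using the diagonal. The cusp $\diag_{X'}$ of $\Pi'$ has normalizer mapping isomorphically to $G_{k'}$ after decuspidalizing all the other cusps, by \cref{reconstruction_of_section}(vii)(viii). Combined with the restriction of $f$, this gives a section of $\Pi_X$ over $G_{k'}$ that lies in the decomposition group of a cusp of the geometric fiber, i.e.\ a cuspidal section of $\Pi_{X\setminus\{x\}}$ near $x$. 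Applying \cref{devissage_of_section}(ii) to the open immersion $X'\setminus\{\ldots\}\hookrightarrow X'$ (using Theorem F of \cite{RNSPM}: sections whose image in a partial compactification is cuspidal/geometric come from points), we obtain (i). The main obstacle is precisely making this step rigorous: we must show that the image of $s'$ under the relevant open immersion is cuspidal or arises from a rational point, using only the $J$-stability of $f(\Pi')$.

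Given (i), the remaining assertions follow in order. Assertion (ii) follows from \cref{constructible_for_geometric_section}. Assertion (iii) follows from \cref{anabelian_CAVC_more}(iii): we factor $f$ through the image of $\Pi'$, use the index $2$ condition on $\Delta'$, and descend the GD as in the quotient-by-$H$ argument there. Assertion (iv) follows because a GD of $\Delta\subset\Pi$, together with \cref{reconstruction_of_section}(viii) and \cref{devissage_of_section}(i) applied to the restriction of $s$ to $G_{k'}$ (which comes from a point by (i) and the induced morphism of GDs from \cref{anabelian_CAVC_more}(iv)), shows that $s$ arises from a $k$-rational point.
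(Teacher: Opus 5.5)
Your argument for (i), which is the heart of the theorem, has a genuine gap, and it is exactly the step you flag as the main obstacle. Your sketch does not supply it. \cref{reconstruction_of_section}(viii) concerns the image of the normalizer $N$ of a diagonal inertia group $I$ after filling in $\diag_{X'}$ \emph{itself}, not the other cusps, and that image is just $s'$ again. Moreover, in the virtual setting nothing tells you that $N$ is the decomposition group of a genuine cusp of a genuine curve; that is precisely what has to be proved.

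The missing idea is to use $J$ to \emph{transport constructibility}, as the paper does:
\begin{enumerate}
\item Let $O$ be the covering of the root of the virtual decuspidaloid of $\Delta\subset\Pi$ induced by $f(\Pi')$. Both cusp sets have four elements, so $\cusp(O)\to\cusp(\Pi)$ is bijective.
\item Since $J$ stabilizes $f(\Pi')$ and acts on $\cusp(\Pi)$ as the Klein group (\cref{generic_symmetry_genus_0}(ii)), some $\theta\in J$ swaps $\diag_{X'}$ with another cusp $\alpha\in\cusp(O)$.
\item The decuspidalization $O_{\{\diag_{X'}\}}$ is $[\pr^{1}_{X'},s']$, a split family, hence constructible (\cref{constructible_trivial_family}). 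The element $\theta$ maps $F(O_{\{\diag_{X'}\}})$ isomorphically onto $F(O_{\{\alpha\}})$, so $O_{\{\alpha\}}$ is constructible, and then so is $O_{\{\diag_{X'},\alpha\}}$ (\cref{anabelian_CAVC_more}(ii)).
\item In the genuine curve underlying $O_{\{\alpha\}}$, the class $\diag_{X'}$ is a genuine cusp. The image $N_{\{\alpha\}}$ of $N$ normalizes the image of $I$. It also surjects onto $G(O_{\{\alpha\}})$, because $N$ surjects onto $G(O)$ by \cref{reconstruction_of_section}(viii). So by \cref{normalizer}, $N_{\{\alpha\}}$ is the full normalizer, i.e.\ the decomposition group of that cusp.
\item Hence the image of $N$ in $\Pi(O_{\{\diag_{X'},\alpha\}})$ is the section of a rational point. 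Applying \cref{devissage_of_section}(ii) to the open immersion of geometric data underlying $O_{\{\diag_{X'}\}}\to O_{\{\diag_{X'},\alpha\}}$ shows that $N_{\{\diag_{X'}\}}$, i.e.\ $s'$, is geometric.
\end{enumerate}
This swap is the only place the $J$-stability hypothesis enters, and your detour through splitting a $(1,1)$-family does not substitute for it.

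Your (iii) is also not correct as written. \cref{anabelian_CAVC_more}(iii) requires a morphism of geo-iso-type, but $f(\Delta')$ has index $2$ in $\Delta$. Moreover, in that proof the \'etaleness of $X\to X^{H}$ came from $H$ acting faithfully on the constant field. That fails for the nontrivial element $\omega$ of $\Pi/f(\Pi')\simeq\Delta/f(\Delta')$, which acts geometrically. What you need instead is that $\omega$, acting on the canonical $(1,4)$-curve $Y$ of $\Delta'\subset\Pi'$ (via \cref{anabelian_CAVC}(i), available once (ii) is known), has no fixed point on $Y$. This is where your Hurwitz count belongs. Since each cusp of $\Pi$ has a unique preimage, $\omega$ fixes the four cusps of $Y^{\cl}$. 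An involution of a genus-$1$ curve with genus-$0$ quotient has exactly four fixed points, so $\omega$ acts freely on $Y$, and $Y/\langle\omega\rangle$ yields a GD of $\Delta\subset\Pi$.

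Finally, (iv) follows directly from (iii) and \cref{reconstruction_of_section}(viii): the normalizer of a representative of $\diag_X$ is now a genuine cuspidal decomposition group. Your passage to ``the restriction of $s$ to $G_{k'}$'' via \cref{devissage_of_section}(i) is vacuous here, because $f$ already induces an isomorphism $\Pi'/\Delta'\simto\Pi/\Delta$.
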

\begin{proof}
    First, we consider assertion (i). Write $(\mf{X},F)\coloneqq(\cavc_{\Delta\subset \Pi}, F_{\Delta\subset \Pi})$ for the virtual decuspidaloid associated to $\Delta\subset \Pi$ (cf. \cref{CAVC_functor_defi}). 
    Denote by $O$ the covering of $\root(\mf{X})$ induced by the open subgroup $f(\Pi')\subset \Pi$, by $T:O\to \root(\mf{X})$ the corresponding morphism of cov-type, by $T_\cusp:\cusp(O)\to \cusp(\root(\mf{X}))$ the map determined by $F$ (cf. \cref{determined_cusp_morphism}(i)).
    Since $T_\cusp$ is surjective (cf. \cref{covering_group_theoretic}; \cref{defi_cusp_compatibility_subgroup}), by construction (where we note that both $\cusp(O)$ and $\cusp(\root(\mf{X}))$ are of cardinality $4$), $T_\cusp$ is bijective.
    Denote by $\diag_{X}$ (respectively, $\diag_{X'}$) the element of $\cusp(\root(\mf{X}))$ (respectively, $\cusp(O)$) determined by the decuspidalization $[\pr^{2/1}_{X},s]\to [\pr^{1}_{X},s]$ (respectively, $[\pr^{2/1}_{X'},s']\to [\pr^{1}_{X'},s']$). Denote the remaining three elements of $\cusp(O)$ other than $\diag_{X'}$ by $\alpha$, $\beta$, and $\gamma$.
    Denote the decuspidalization of $O$ induced by $\{\diag_{X'}\}$ (respectively, $\{\alpha\}$; $\{\diag_{X'},\alpha\}$) by $O_{\{\diag_{X'}\}}$ (respectively, $O_{\{\alpha\}}$; $O_{\{\diag_{X'},\alpha\}}$).
    Let $I\subset f(\Pi')$ be a representative of $\diag_{X'}$. Denote by $N$ the normalizer of $I$ in $f(\Pi')$; by $I_{\{\diag_{X'}\}}$ (respectively, $I_{\{\alpha\}}$; $I_{\{\diag_{X'},\alpha\}}$) the natural image of $I$ in $\Pi(O_{\{\diag_{X'}\}})$ (respectively, $\Pi(O_{\{\alpha\}})$; $\Pi(O_{\{\diag_{X'},\alpha\}})$); by $N_{\{\diag_{X'}\}}$ (respectively, $N_{\{\alpha\}}$; $N_{\{\diag_{X'},\alpha\}}$) the natural image of $N$ in $\Pi(O_{\{\diag_{X'}\}})$ (respectively, $\Pi(O_{\{\alpha\}})$; $\Pi(O_{\{\diag_{X'},\alpha\}})$). By abuse of language, we regard $\cusp(O_{\diag_{X'}})$ (respectively, $\cusp(O_{\{\alpha\}})$; $\cusp(O_{\{\diag_{X'},\alpha\}})$) as a subset of $\cusp(O)$ (cf. \cref{determined_cusp_morphism}(ii)), hence their elements may be regarded as elements of $\cusp(O)$.
    By [\cite{SZM}, Proposition 6.1], $O_{\{\diag_{X'}\}}$ is constructible. 
    By assumption, the automorphisms of $\Pi$ induced by elements of ``$J$'' described in \cref{generic_symmetry_genus_0}(ii) naturally induce an action of $J$ on the inclusion $F(O)$. 
    By construction (cf. \cref{determined_cusp_morphism}(i)) and \cref{generic_symmetry_genus_0}(ii), together with the fact that $T_\cusp$ is bijective, there exists an element $\theta\in J$ such that the action $\theta|_{F(O)}$ of $\theta$ on $F(O)$ permutes the two elements $\diag_{X'}$, $\alpha$ (cf. [\cite{SZM}, Theorem 4.20(ii)]). Therefore, $\theta$ induces an isomorphism $F(O_{\{\diag_{X'}\}})\simto F(O_{\{\alpha\}})$. Thus, $O_{\{\alpha\}}$ is constructible. 
    By \cref{anabelian_CAVC_more}(ii), $O_{\{\diag_{X'},\alpha\}}$ is constructible.
    By \cref{reconstruction_of_section}(viii), $N_{\{\diag_{X'}\}}$ defines a Galois section of $\Pi(O_{\{\diag_{X'}\}})\surjto G(O_{\{\diag_{X'}\}})$. Therefore, $N_{\{\diag_{X'},\alpha\}}$ defines a Galois section of $\Pi(O_{\{\diag_{X'},\alpha\}})\surjto G(O_{\{\diag_{X'},\alpha\}})$. In particular, $N_{\{\diag_{X'},\alpha\}}$ surjects onto $G(O_{\{\diag_{X'},\alpha\}})$, hence $N$ (respectively, $N_{\{\diag_{X'}\}}$; $N_{\{\alpha\}}$) surjects onto $G(O)$ (respectively, $G(O_{\{\diag_{X'}\}})$; $G(O_{\{\alpha\}})$).
    By construction (cf. \cref{decusp_universal}), $I_{\{\alpha\}}$ is a representative of $\diag_{X'}$ (where we regard $\diag_{X'}$ as an element of $\cusp(O_{\{\alpha\}})$), and $N_{\{\alpha\}}$ normalizes $I_{\{\alpha\}}$. Since $N_{\{\alpha\}}$ surjects onto $G(O_{\{\alpha\}})$, and $O_{\{\alpha\}}$ is constructible, it follows from \cref{normalizer} that $N_{\{\alpha\}}$ is the normalizer of $I_{\{\alpha\}}$. Thus, since $O_{\{\alpha\}}$ is constructible, it follows that
    $N_{\{\diag_{X'},\alpha\}}$ is a Galois section that arises from a rational point. In particular, by \cref{devissage_of_section}(ii), $N_{\{\diag_{X'}\}}$ is a Galois section that arises from a rational point. Thus, by \cref{reconstruction_of_section}(viii), $s'$ arises from a $k'$-rational point.  This completes the proof of assertion (i).
    Assertion (ii) follows immediately from assertion (i) and [\cite{SZM}, Corollary 3.8]. 
    
    Next, we consider assertion (iii). Denote by $Y$ the canonical scheme associated to $\Delta'\subset \Pi'$ (cf. \cref{anabelian_CAVC}).  One verifies immediately that $Y$ is a curve of type $(1,4)$. By \cref{anabelian_CAVC}(ii), in order to establish assertion (iii), it suffices to verify that the induced action of the unique nontrivial element $\omega$ of ($\Delta/\Delta'\simto$) $\Pi/\Pi'$ on $Y$ does not fix any closed point of $Y$. Denote by $Y^{\cl}$ the unique compactification of $Y$. By construction, the induced action of $\omega$ on $Y^{\cl}$ already fixes the four $k'$-rational points of $Y^{\cl}$ that arise from the four cusps of $Y$. Therefore, assertion (iii) follows immediately from the Hurwitz formula.
    Assertion (iv) follows immediately from assertion (iii) and \cref{reconstruction_of_section}(viii).
\end{proof}

\begin{coro}\label{real_thmb}
    Let $k$ (respectively, $k'$) be an AF; $\bar{k}$ (respectively, $\bar{k}'$) an algebraic closure of $k$ (respectively, $k'$); $G_{k}$ (respectively, $G_{k'}$) the Galois group $\gal(\bar{k}/k)$ (respectively, $\gal(\bar{k}'/k')$); $X$ (respectively, $X'$) a hyperbolic curve over $k$ (respectively, $k'$) of type $(0,3)$ (respectively, type $(1,3)$); $s:G_{k}\rightarrow \Pi_{X}$ (respectively, $s':G_{k'}\rightarrow \Pi_{X'}$) a section of the natural surjection $\Pi_{X}\surjto G_{k}$ (respectively, $\Pi_{X'}\surjto G_{k'}$). Write $\Pi$ (respectively, $\Pi'$) for the virtual fundamental group $\Pi_{[\pr^{2/1}_{X},s]}$ (respectively, $\Pi_{[\pr^{2/1}_{X'},s']}$); $\Delta$ (respectively, $\Delta'$) for the geometric virtual fundamental group $\Delta_{[\pr^{2/1}_{X},s]}$ (respectively, $\Delta_{[\pr^{2/1}_{X'},s']}$). 
    Assume that there exists an open group injection $f:\Pi'\injto \Pi$ such that $f^{-1}(\Delta)=\Delta'$ (where we note that in the case where $k$ is an NF, the assumption $f^{-1}(\Delta)=\Delta'$ is satisfied automatically by [\cite{SZM}, Theorem 5.1]).
    Then the following assertions hold:

    (i) The induced injection $f_{\Delta}:\Delta'\injto \Delta$ is of index $2$.

    (ii) $s'$ arises from a $k'$-rational point.

    (iii) The inclusion $\Delta'\injto \Pi'$ is constructible (cf. \cref{defi_constructible_inclusion}(ii)).

    (iv) The inclusion $\Delta\injto \Pi$ is constructible.

    (v) $s$ arises from a $k$-rational point.
\end{coro}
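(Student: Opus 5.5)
The strategy is to reduce \cref{real_thmb} to \cref{thmb_simp}. This means verifying the extra hypotheses of \cref{thmb_simp}, possibly after passing to finite field extensions, and then descending the conclusions with \cref{devissage_of_section}.

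First, for assertion (i), compare Euler characteristics. The inclusion $f_\Delta:\Delta'\injto\Delta$ is open of finite index, say $d$. The group $\Delta$ (respectively, $\Delta'$) is isomorphic to the geometric fundamental group of a curve of type $(0,4)$ (respectively, $(1,4)$). The fibres of $\pr^{2/1}$ are $X$ minus a point, resp. $X'$ minus a point (cf. [\cite{SZM}, Proposition 3.7]). By \cref{covering_group_theoretic}, $f_\Delta$ comes from a finite \etale covering of hyperbolic curves, so the Euler characteristics satisfy $2-2\cdot1-4=d(2-0-4)$, i.e. $-4=-2d$, hence $d=2$. The index of $f$ itself is then $2\cdot[G_k:\text{image of }G_{k'}]$.

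Next, reduce to the situation of \cref{thmb_simp}. Let $k_1\supseteq k$ be the finite extension cut out by the image of $\Pi'$ in $G_k$, and enlarge it further to a finite extension $k_2$ over which all of the following hold:
\begin{itemize}
\item the three cusps of $X$ are rational, so that $X_{k_2}\simto\mathbb{P}^1\setminus\{0,1,\infty\}$;
\item the natural action of $\Pi$ on $\cusp$ becomes trivial;
\item the group $J$ of \cref{generic_symmetry_genus_0} acts on the corresponding open subgroup.
\end{itemize}
Replace $\Pi$ by the open subgroup $\Pi_2$ lying over $G_{k_2}$, and $\Pi'$ by $f^{-1}(\Pi_2)$; by \cref{field_ext_curve_theoretic} both are again of the form $\Pi_{[\pr^{2/1},s]}$ over the corresponding base-changed fields. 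Since $\Delta'\subset\Pi'$ and $\Delta\subset\Pi$ are unchanged, $f$ now has index exactly $2$ and $f_\Delta$ has index $2$.

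It remains to check that $J$ stabilizes $f(\Pi')$. The elements of $J$ act trivially modulo $\Delta$, since they lie over the identity of $X$. So it suffices that $J$ stabilizes the index-two subgroup $f(\Delta')\subset\Delta$. This is the main obstacle, and I would try two routes. The first is to characterize $f(\Delta')$ intrinsically: it should be the kernel of a character $\Delta\to\Z/2$ that is unramified at $\diag$ (since the fibre cover $X'\setminus\{pt\}\to X\setminus\{pt\}$ has all four cusps unramified, by the Hurwitz count above). Such a character is nontrivial on the generators of the other cuspidal inertia groups in a symmetric way, and the Klein group $J$ permutes the four cusps. The second route, if needed, is to pass to a further open subgroup $\Pi''\subset f(\Pi')$ stabilized by $J$ (for instance the intersection of the $J$-translates) and run the argument of \cref{thmb_simp}(i) there. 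The key input in that argument is a bijective covering map on cusps, which is exactly what we have here.

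Once \cref{thmb_simp} applies over $k_2$, we get that the restricted sections of $X'_{k_2'}$ and $X_{k_2}$ arise from rational points. \cref{devissage_of_section}(i) then descends this to $s'$ and $s$, giving assertions (ii) and (v). Assertions (iii) and (iv) follow from (ii) and (v) by \cref{constructible_for_geometric_section} (via [\cite{SZM}, Corollary 3.8]).
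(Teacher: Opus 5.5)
Your treatment of (i), the reduction to index $2$ by a finite base change, and the final descent via \cref{devissage_of_section}(i) are fine. The gap is at the step you yourself flag as the main obstacle: you never establish that $J$ stabilizes $f(\Delta')$, and neither of your two routes works.

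In route 1 the ramification is wrong. For a double cover of the $(0,4)$-fibre by a $(1,4)$-curve, Riemann--Hurwitz gives $0=2\cdot(-2)+\sum_P(e_P-1)$, so the cover is branched over \emph{all four} cusps. (If it were unramified at all of them, it would extend to an \'etale double cover of $\mathbb{P}^1$ and would have $8$ cusps.) A character trivial on the $\diag$-inertia and nontrivial on the other three does not exist, because the inertia generators satisfy $e_1+e_2+e_3+e_4=0$ in $\Delta^{\ab}/2\Delta^{\ab}$. Moreover, no characterization singling out $\diag$ could be $J$-invariant: the Klein group acts transitively on the four cusps, and the proof of \cref{thmb_simp} uses exactly an element $\theta\in J$ that moves $\diag_{X'}$.

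Route 2 is circular. A $J$-stable $\Pi''$ with $\Pi''\cap\Delta=f(\Delta')$ exists only if $f(\Delta')$ is already $J$-stable. Otherwise $\Pi''\cap\Delta$ cuts out an elementary abelian $2$-cover of the fibre of degree $\geq 4$. Its cyclic inertia groups have order $\leq 2$, so the map on cusps is \emph{not} bijective. The object is also no longer of the form $[\pr^{2/1}_{Z},t]$, so the inputs of \cref{thmb_simp} (\cite{SZM}, Proposition 6.1, and \cref{reconstruction_of_section}(viii)) are unavailable.

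The missing idea is \cref{characterstic_of_1_4}, which is the correct form of your route 1. The character $\Delta\to\Z/2\Z$ must be nontrivial on every $e_i$. Since $\Delta^{\ab}/2\Delta^{\ab}\cong(\Z/2\Z)^3$ is generated by the $e_i$ subject only to $\sum e_i=0$, the character is forced to be the unique one with $e_i\mapsto 1$ for all $i$. Hence $\Delta$ has a \emph{unique} index-$2$ open subgroup of type $(1,4)$. It follows that every automorphism of $\Pi$ preserving $\Delta$ and the cuspidal inertia subgroups, in particular every element of $J$, maps $f(\Delta')$ to itself.

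Your reduction ``$J$ acts trivially modulo $\Delta$, so it suffices to stabilize $f(\Delta')$'' also needs one more step. The subgroup $j(f(\Pi'))$ may differ from $f(\Pi')$ by a twist by a character of the Galois group. You must therefore pass to a further finite extension $k''$ that kills these finitely many twists. This is what the paper does (together with \cite{SZM}, Proposition 6.8(iv)) before invoking \cref{thmb_simp}.
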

\begin{proof}
    Assertion (i) follows immediately from [\cite{SZM}, Proposition 4.18(iii)] and the Hurwitz formula.
    By \cref{characterstic_of_1_4} and [\cite{SZM}, Proposition 6.8(iv)], one may assume that there exists a finite field extension $k''$ of $k$ such that the restriction $f':f^{-1}(\Pi'')\to \Pi''$ of $f$ to $\Pi''\coloneqq \Pi_{[\pr^{2/1}_{X\times_{k}k''},s|_{G_{k''}}]}$ satisfies the assumptions of \cref{thmb_simp}. Therefore, assertion (ii) (respectively, assertion (v)) follows immediately from \cref{devissage_of_section}(i) and \cref{thmb_simp}(i) (respectively, \cref{thmb_simp}(iv)), and assertion (iii) (respectively, assertion (iv)) follows immediately from \cref{anabelian_CAVC_more}(iii) and \cref{thmb_simp}(ii) (respectively, \cref{thmb_simp}(iii)).
\end{proof}

\begin{lemm}\label{characterstic_of_1_4}
    Let $k$ be an AF; $\bar{k}$ an algebraic closure of $k$; $G_{k}$ the Galois group $\gal(\bar{k}/k)$; $Z$ a hyperbolic curve over $k$ of type $(0,3)$ that is isomorphic over $k$ to $\mathbb{P}^{1}_{k}\setminus\{0,1,\infty\}$; $s:G_{k}\rightarrow \Pi_{Z}$ a section of the natural surjection $\Pi_{Z}\surjto G_{k}$. Write $\Pi$ for the virtual fundamental group $\Pi_{[\pr^{2/1}_{Z},s]}$; $\Delta$ for the geometric virtual fundamental group $\Delta_{[\pr^{2/1}_{Z},s]}$. Let $\Pi^{\dagger}$, $\Pi^{\ddagger}$ be two open subgroups of $\Pi$ such that the following conditions hold:
    \begin{enumerate}
        \item The subgroups $\Delta^{\dagger}\coloneqq \Pi^{\dagger}\cap \Delta$, $\Delta^{\ddagger}\coloneqq \Pi^{\ddagger}\cap \Delta$ are of index $2$ in $\Delta$.

        \item The types of $\Delta^{\dagger}\subset \Pi^{\dagger}$, $\Delta^{\ddagger}\subset \Pi^{\ddagger}$ are of $(1,4)$ (cf. \cref{defi_cusp_33}; \cref{covering_group_theoretic}).
    \end{enumerate}
    Then $\Delta^{\dagger} = \Delta^{\ddagger}$.
\end{lemm}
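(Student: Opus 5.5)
The plan is to reduce to the classical description of double covers of a four-punctured projective line, where the condition ``type $(1,4)$'' forces the covering to be ramified over \emph{all} four cusps. This pins down a unique index-$2$ subgroup of $\Delta$.

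\emph{Reduction.} First, $[\pr^{2/1}_{Z},s]$ is of type $(0,4)$, since the fibres of $\pr^{2/1}_{Z}$ are $Z$ minus one further point. By [\cite{SZM}, Theorem 4.20(ii)], we may replace $s$ by a section arising from a rational point after a field base change, without changing $\Delta$ or $\cusp(\Delta\subset\Pi)$. Then [\cite{SZM}, Corollary 3.8] and [\cite{SZM}, Theorem 4.20(iii)] identify $\Delta\subset\Pi$, together with $\cusp(\Delta\subset\Pi)$, with the geometric fundamental group $\Delta_{V}$ of a curve $V$ of type $(0,4)$ and its set of cuspidal inertia classes. By the well-known structure of $\Delta_V$ (cf. \cref{virtual_inertia_and_real_inertia}), we may choose generators $x_1,\dots,x_4$ of representatives $I_1,\dots,I_4$ of the four cusp classes with $x_1x_2x_3x_4=1$, and $\Delta$ is free profinite on $x_1,x_2,x_3$.

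\emph{Counting cusps.} An index-$2$ subgroup $\Delta^{\dagger}\subset\Delta$ is normal, hence is the kernel of a surjection $\chi:\Delta\surjto\Z/2\Z$. Such a $\chi$ is determined by $(\chi(x_1),\dots,\chi(x_4))\in(\Z/2\Z)^4$ with $\sum\chi(x_i)=0$. Let $B$ be the set of $i$ with $\chi(x_i)\neq 0$; note that $\card(B)$ is even. By \cref{covering_group_theoretic}, $\cusp(\Delta^{\dagger}\subset\Pi^{\dagger})$ is the set of conjugacy classes induced from $\cusp(\Delta\subset\Pi)$ via the inclusion (cf. \cref{defi_cusp_compatibility_subgroup}). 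Since $\Delta^{\dagger}$ is normal, we argue class by class:
\begin{itemize}
\item If $i\notin B$, then $I_i\subset\Delta^{\dagger}$, and the class of $I_i$ splits into two $\Delta^{\dagger}$-conjugacy classes, represented by $I_i$ and $gI_ig^{-1}$ with $g\notin\Delta^{\dagger}$.
\item If $i\in B$, then $I_i\cap\Delta^{\dagger}$ has index $2$ in $I_i$, and the class yields a single $\Delta^{\dagger}$-class.
\end{itemize}
These are precisely the splitting and ramification of cusps in a double cover. Hence $r(\Delta^{\dagger}\subset\Pi^{\dagger})=8-\card(B)$. Condition (2) gives $r=4$, so $\card(B)=4$. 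As a consistency check with $g=1$: $\Delta^{\dagger}$ is free of rank $1+2\cdot 2=5=2g+r-1$.

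\emph{Conclusion.} We obtain $\chi(x_i)=1$ for all $i$. Since $x_1,x_2,x_3$ generate $\Delta$, this determines $\chi$ uniquely. The same holds for $\Delta^{\ddagger}$, so $\Delta^{\dagger}=\ker\chi=\Delta^{\ddagger}$. Condition (1) is used only to know that we are dealing with index-$2$ subgroups of $\Delta$.

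The only step requiring care is the reduction: checking that the identification of $\cusp$ with the genuine cuspidal inertia classes of $V$ is compatible with passing to the open subgroup $\Pi^{\dagger}$. Once this is in place, the ramified/split analysis is the standard one. I would handle it by citing \cref{covering_group_theoretic} together with \cref{field_ext_curve_theoretic}, so that the cusp count of $\Delta^{\dagger}\subset\Pi^{\dagger}$ can be computed entirely inside $\Delta$.
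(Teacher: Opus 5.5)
Your proposal is correct and follows essentially the paper's route. Like the paper, you reduce via [\cite{SZM}, Theorem 4.20] to the geometric $(0,4)$ situation, show that the double cover must be ramified over all four cusps, and conclude by $\mathbb{F}_2$-linear algebra on the cusp generators $e_1,\dots,e_4$ with the single relation $e_1+e_2+e_3+e_4=0$. The one minor difference is that the paper forces ramification at every cusp via the Hurwitz formula, using $g=1$, whereas you force it from the cusp count $r=8-\card(B)=4$, computed from the induced conjugacy classes. Either half of the type $(1,4)$ suffices.
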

\begin{proof}
    Since $\Delta^{\dagger}$ (respectively, $\Delta^{\ddagger}$) is of index $2$ in $\Delta$, $\Delta^{\dagger}$ (respectively, $\Delta^{\ddagger}$) is normal in $\Delta$. Moreover, the quotient $\Delta/\Delta^{\dagger}$ (respectively, $\Delta/\Delta^{\ddagger}$) factors through the quotient $\Delta^{\ab}/2(\Delta^{\ab})$.
    By [\cite{SZM}, Theorem 4.20], without loss of generality, one may assume that $s$ arises from a $k$-rational point $z$ of $Z$. 
    Consequently, it is sufficient to verify the following claim:

    ($*$): Let $k$ be an algebraically closed field of characteristic zero; $X$ a hyperbolic curve over $k$ of type $(0,4)$; $f:Y\to X$ an \etale covering of degree $2$ such that $Y$ is of type $(1,4)$. Then the unique extension of $f$ to the smooth compactification $f^{\cl}:Y^{\cl}\to X^{\cl}$ is branched over each of the four cusps of $X^{\cl}$. Moreover, such a covering $f$ is unique up to isomorphism.

    We now proceed to prove the claim ($*$). First, it follows immediately from the Hurwitz formula that the extension $f^{\cl}:Y^{\cl}\to X^{\cl}$ is branched over each of the four cusps of $X^{\cl}$.
    Denote by $J$ the quotient $(\Pi_{X})^{\ab}/2(\Pi_{X})^{\ab}$. For each $i\in\{1,2,3,4\}$, let $I_{i}\subset J$ be the image of a representative of the conjugacy class associated to the $i$-th cusp of $X$, and let $e_{i}\in J$ denote the unique nontrivial element of $I_{i}$. Here, the indexing of the cusps is chosen arbitrarily. Let $H\subset J$ be the subgroup of index $2$ induced by the covering $Y\to X$.
    Since the extension $f^{\cl}:Y^{\cl}\to X^{\cl}$ is branched over each of the four cusps of $X^{\cl}$, it follows that $H$ satisfies the following conditions:
    \begin{enumerate}
        \item The subgroup $H\subset J$ is of index $2$ in $J$.
        \item For each $i\in\{1,2,3,4\}$, the intersection $H\cap I_{i}$ is trivial.
    \end{enumerate}
    It follows from a standard computation via Kummer theory for algebraic curves that $J$, regarded as a $\mathbb{Z}/2\mathbb{Z}$-vector space, has dimension $3$ and is generated by $\{e_{1},e_{2},e_{3},e_{4}\}$ subject to the single relation $e_{1}+e_{2}+e_{3}+e_{4}=0$. Therefore, by an elementary calculation in linear algebra, $H$ is uniquely determined. This completes the proof of \cref{characterstic_of_1_4}.
    
\end{proof}

\section{Reconstruction Algorithms}\label{section_reconstruction}

In this section, we discuss reconstruction algorithms associated to various generalized virtual decuspidaloids that arise from the virtual fundamental group $\Pi_{[\pr^{2/1}_{X},s]}$. In \cref{reconstruction_algorithm_genus_2}, we consider the case in which the genus is greater than or equal to $2$; in \cref{validity_of_descent_genus_1}, we consider the case in which the genus is $1$.

\begin{prop}\label{Galois_group_geometricity_from_open_subgroup} \quad

    (i) Let $G$ be a profinite group; $K$, $K'$ ACAFs; $f:G\to \Aut(K)$, $f':G\to \Aut(K')$ FT morphisms. Then $K$ and $K'$ are isomorphic (cf. \cref{Notations and Terminologies}, the discussion entitled ``Fields, Schemes and Curves'').

    (ii) Let $G$ be a profinite group; $K$ an ACAF; $f:G\to \Aut(K)$, $f':G\to \Aut(K)$ FT morphisms; $H\subset G$ an open subgroup; $f|_{H}:H\to \Aut(K)$, $f'|_{H}:H\to \Aut(K)$ the respective restrictions. Assume that $f|_{H}$ and $f'|_{H}$ are conjugate. Then $f$ and $f'$ are conjugate (cf. \cref{Notations and Terminologies}, the discussion entitled ``Fields, Schemes and Curves''). 
\end{prop}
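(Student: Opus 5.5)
For assertion (i), the plan is to show that the characteristic prime of an ACAF $K$ admitting an FT morphism $f:G\to \Aut(K)$ is determined by the isomorphism class of the profinite group $G$. Once that is done, $K$ and $K'$ have the same characteristic prime $p$, so both are isomorphic, as topological fields, to $\bar{\mathbb{Q}}_{(p)}$, and (i) follows.

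By the definition of an FT morphism, $f$ identifies $G$, as a profinite group, with an open subgroup of $\Aut(K)$. Hence $G\simto G_{k'}$, where $k'$ is the fixed field of $f$. This $k'$ is a number field if $p=0$, and a finite extension of $\mathbb{Q}_{p}$ if $p$ is a prime number. Two group-theoretic invariants of $G$ then suffice.

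\begin{itemize}
\item \emph{Number field versus local field.} Consider $\dim_{\mathbb{F}_2} H^1(G,\mathbb{F}_2)$. If $k'$ is a number field, this is infinite, since $k'^{\times}/(k'^{\times})^{2}$ is infinite and Kummer theory applies. If $k'$ is a local field, it is finite, by local class field theory. So $G$ decides whether $p=0$.
\item \emph{Recovering the residue characteristic.} If $k'$ is local, recover $p$ as the unique prime $l$ for which $\dim_{\mathbb{F}_l} H^1(G,\mathbb{F}_l)$ exceeds $2$ after passing to a suitable open subgroup. By the local Euler characteristic formula, this dimension equals $[k':\mathbb{Q}_p]+1+\delta$ (with $\delta\in\{0,1\}$) for $l=p$, and is at most $2$ for $l\neq p$. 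Since the degree can be made large by passing to open subgroups, this singles out $p$. This is the standard group-theoretic characterization of the residue characteristic (cf. [\cite{NSW}, Chapter VII, \S4]; Chapter XII).
\end{itemize}

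I expect the main obstacle to be stating these invariants cleanly and citing them precisely; they are standard, and there is no real conceptual difficulty.

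For assertion (ii), the plan is to reduce to a centralizer computation in $\Aut(K)$.

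\begin{itemize}
\item \emph{Normalization.} Composing $f'$ with the inner automorphism given by the hypothesis, we may assume $f|_{H}=f'|_{H}$. Replacing $H$ by an open subgroup $N\subseteq H$ that is normal in $G$ preserves this equality.
\item \emph{Centralizer argument.} Let $g\in G$ and $n\in N$. Since $gng^{-1}\in N$,
\[
f(g)f(n)f(g)^{-1}=f(gng^{-1})=f'(gng^{-1})=f'(g)f(n)f'(g)^{-1}.
\]
Thus $f'(g)^{-1}f(g)$ centralizes $f(N)$.
\item \emph{Slimness.} Since $f$ is FT, $f(N)$ is an open subgroup of $\Aut(K)\simto \gal(K/k)$, that is, an open subgroup of $G_{\mathbb{Q}}$ or of $G_{\mathbb{Q}_p}$. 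Such an open subgroup has trivial centralizer in the ambient absolute Galois group. For number fields and $p$-adic fields this is standard: absolute Galois groups are slim, and their open subgroups have trivial centralizers (cf. [\cite{NSW}, Chapter XII]; [\cite{MZK3}]).
\item \emph{Conclusion.} Hence $f(g)=f'(g)$ for all $g\in G$. So, after composing with the inner automorphism from the first step, $f'=f$, which means the original $f$ and $f'$ are conjugate.
\end{itemize}

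The only nonformal input in (ii) is the triviality of centralizers of open subgroups.
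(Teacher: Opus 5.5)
Your proposal is correct and is essentially the paper's argument. For (i), the paper also first decides group-theoretically whether the characteristic prime is $0$, using topological finite generation of $G$, for which your $\dim_{\mathbb{F}_2}H^1(G,\mathbb{F}_2)$ criterion is an equally valid substitute; it then recovers $p$ in the local case by citing [\cite{MZK6}, Proposition 1.2.1(i)], which your local Euler characteristic computation effectively reproves. For (ii), the paper simply invokes the slimness of $\Aut(K)$, and your centralizer computation (after normalizing to $f|_{N}=f'|_{N}$ on an open normal subgroup $N$) is exactly the unwinding of that appeal.
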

\begin{proof}
    First, we consider assertion (i). Note that the characteristic prime of $K$ is non-zero if and only if $G$ is topologically finitely generated (cf. [\cite{MZK3}, Theorem 1.7(iii); \cite{NSW}, Theorem 7.4.1]). Thus, it suffices to treat the case in which we know a priori that the characteristic primes of $K$ and $K'$ are positive. Under this assumption, assertion (i) follows immediately from [\cite{MZK6}, Proposition 1.2.1(i)].  Assertion (ii) follows immediately from the slimness of $\Aut(K)$ (cf. [\cite{MZK3}, Theorem 1.7(ii)(iii)]).
\end{proof}


First, we consider the case where the genus is greater than or equal to $2$. This case admits a relatively straightforward treatment. Roughly speaking, it suffices to verify that the virtual fundamental group $\Pi_{[\pr^{2/1}_{X},s]}/\cusp([\pr^{2/1}_{X},s])$ arising from the decuspidalization of $[\pr^{2/1}_{X},s]$ at all cusps is naturally isomorphic to the \etale fundamental group $\Pi_{X^{\cl}}$ of the unique smooth compactification $X^{\cl}$ of $X$.

\begin{theo}\label{reconstruction_algorithm_genus_2}
    Let $k$ be an AF; $\bar{k}$ an algebraic closure of $k$; $G_{k}$ the Galois group $\gal(\bar{k}/k)$; $X$ a hyperbolic curve over $k$ of type $(g,r)$, where $g\geq 2$; $s:G_{k}\rightarrow \Pi_{X}$ a section of the natural surjection $\Pi_{X}\surjto G_{k}$. Write $\Pi$ for the virtual fundamental group $\Pi_{[\pr^{2/1}_{X},s]}$; $\Delta$ for the geometric virtual fundamental group $\Delta_{[\pr^{2/1}_{X},s]}$; $X^{\cl}$ for the unique smooth compactification of $X$; $(\mf{X},F)\coloneqq(\cavc_{\Delta\subset \Pi}, F_{\Delta\subset \Pi})$ for the virtual decuspidaloid associated to $\Delta\subset \Pi$ (cf. \cref{CAVC_functor_defi}). Then one may construct various geometric objects (i.e., $\base(\Pi)$, $\sche(\Pi)$, $K(\sche(\Pi))$, $f$, and $h$ as defined in the subsequent algorithms) from the abstract profinite group $\Pi$ functorially with respect to isomorphisms of topological groups by the following algorithm:

    (a) One constructs $(\mf{X},F)$ from the abstract group $\Pi$ functorially with respect to isomorphisms of topological groups (cf. [\cite{SZM}, Theorem 5.13]; \cref{construction_CAVC}).

    (b) Let $O\in \obj(\mf{X})$ be the unique proper (cf. \cref{defi_intrinsic_virtual_decuspidaloid}(v)) object such that $O$ is a decuspidalization of $\root(\mf{X})$ (cf. \cref{prop_intrinsic_virtual_decuspidaloid}(ii)). (Note that the genus of $\root(\mf{X})$ is greater than or equal to $2$. Hence, $\cusp(\root(\mf{X}))$ is of admissible-type (cf. \cref{defi_admissible_type}). Therefore, existence is immediate. Uniqueness follows immediately from \cref{prop_intrinsic_virtual_decuspidaloid}(v).) By \cref{anabelian_CAVC_more}(ii) and [\cite{SZM}, Proposition 6.1], $O$ is constructible (cf. \cref{defi_constructible_object}). Moreover, the canonical scheme $(\sch(\Delta(O)\subset \Pi(O)))^{\Pi(O)/\Delta(O)}$ is naturally isomorphic to $X^{\cl}$ (cf. \cref{anabelian_CAVC}(i)(ii)).

    (c) Since $O$ is proper, it follows from the definitions that the subcategory $\mf{X}|_{O}$ coincides with $\mf{X}|_{O}^{\cov}$ (cf. \cref{defi_virtual_decuspidaloid_arises_from_object}(i)(ii)). Furthermore, every morphism in $\mf{X}|_{O}$ is of cov-type, and $\mf{X}|_{O}$ is a filtered category. By \cref{anabelian_CAVC_more}(i) (cf. also (b)), each object of $\mf{X}|_{O}$ is constructible. By \cref{anabelian_CAVC_more}(vi), for each morphism $f:P'\to P$ of $\mf{X}|_{O}$, one may construct the morphism between the corresponding canonical GDs induced by $f_{\ast}$. Therefore, the canonical schemes associated to $\Delta(O')\subset\Pi(O')$, where $O'$ ranges over the objects of $\mf{X}|_{O}$, together with the morphisms between the corresponding canonical GDs induced by $f_{\ast}$ for morphisms $f$ of $\mf{X}|_{O}$, define an inverse system of schemes (cf. \cref{anabelian_CAVC_more}(iv)).

    (d) By taking the direct limit of the rings of regular functions and fraction fields of the schemes appearing in the inverse system defined in step (c), one obtains an inclusion of fields
    \begin{displaymath}
        \base(\Pi)\injto \uc(\Pi),
    \end{displaymath}
    which admits a factorization 
    \begin{displaymath}
        \base(\Pi)\injto K(\sche(\Pi))\injto \uc(\Pi),
    \end{displaymath}
    where we write $\sche(\Pi) \coloneqq \sch(\Delta(O)\subset \Pi(O))$.
    Furthermore, $\base(\Pi)$ is the ring of regular functions of $\sche(\Pi)$. Hence, there exists a natural inclusion $k\injto \base(\Pi)$, which allows $\base(\Pi)$ to be regarded as an algebraic closure of $k$. Consequently, the scheme 
    \begin{displaymath}
        \sche(\Pi)
    \end{displaymath}
    is naturally isomorphic to 
    \begin{displaymath}
        (\sch(\Delta(O)\subset \Pi(O)))^{\Pi(O)/\Delta(O)} \times_{k} \base(\Pi).
    \end{displaymath}
    Denote by 
    \begin{displaymath}
        f: \sche(\Pi)\to \sp(\base(\Pi))
    \end{displaymath}
    the natural morphism induced by the natural field inclusion $\base(\Pi)\injto K(\sche(\Pi))$; by 
    \begin{displaymath}
        h: K(\sche(\Pi))\injto \uc(\Pi)
    \end{displaymath}
    the natural inclusion of fields.

    Moreover, the following assertions hold:

    (i) The action of $\Pi$ on $\base(\Pi)$ functorially induced by the conjugation action of $\Pi$ on itself factors through the quotient $\Pi\surjto G_{k}$. Moreover, there exists a unique $G_{k}$-equivariant isomorphism $\base(\Pi)\simto \bar{k}$ that is compatible with the natural inclusion $k\injto\base(\Pi)$ (cf. [\cite{MZK3}, Theorem 1.7(ii)(iii)]).
    
    (ii) The action of $\Pi$ on $\sche(\Pi)$ functorially induced by the conjugation action of $\Pi$ on itself factors through the quotient $\Pi\surjto G_{k}$. Moreover, there exists a natural $G_{k}$-equivariant isomorphism $\sche(\Pi)\simto X^{\cl}\times_{k}\bar{k}$ that is compatible with the isomorphism of (i).

    (iii) The action of $\Pi$ on $\uc(\Pi)$ functorially induced by the conjugation action of $\Pi$ on itself factors through the quotient $\Pi\surjto \Pi(O)$. Suppose that $\Pi(O)$ arises as the Galois group over $K(X^{\cl})$ of some FFUC of $X^{\cl}$ denoted by $\uc$ (cf. \cref{Notations and Terminologies}, the discussion entitled ``Fundamental Groups''). Then, there exists a natural $\Pi$-equivariant isomorphism $\uc(\Pi)\simto \uc$.
\end{theo}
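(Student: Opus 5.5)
The plan is to verify the algorithm step by step, reducing everything to the constructibility of the proper object $O$ and then applying the functorial anabelian reconstruction of \cref{anabelian_CAVC} to the covering subcategory $\mf{X}|_{O}^{\cov}$.

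First, I would justify step (b). Since $g\geq 2$, the set $\cusp(\root(\mf{X}))$ is of admissible-type. By \cref{prop_intrinsic_virtual_decuspidaloid}(v) there is a unique decuspidalization $O$ of $\root(\mf{X})$ with kernel class all of $\cusp(\root(\mf{X}))$, and by \cref{decuspidalization-group-theoretic} it is proper. Next I would identify $F(O)$ geometrically. I would apply \cref{reconstruction_of_section}(i) and [\cite{SZM}, Proposition 6.1] to the decuspidalization of $[\pr^{2/1}_{X},s]$ at all cusps. Together with [\cite{SZM}, Lemma 5.11] and \cref{constructible_trivial_family}, the full decuspidalization $[\pr^{2/1}_{X},s]\to[\pr^{1}_{X^{\cl}},s]$ has, up to the canonical isomorphisms, virtual fundamental group $\Pi_{X^{\cl}}$ with geometric part $\Delta_{X^{\cl}}$. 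Indeed, the compactification of the family $\pr^{2/1}_{X}$ is $X\times_{k}X^{\cl}\to X$, which is split with splitting morphism the second projection to $X^{\cl}$. Hence $O$ is constructible with GD $X^{\cl}$, and \cref{anabelian_CAVC}(i) gives the identification of the canonical scheme of $O$ with $X^{\cl}$.

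Second, for step (c), I would note that a proper object admits no nontrivial decusp-type morphisms other than isomorphisms (\cref{defi_CAVC_morphism_type}(vi)). So $\mf{X}|_{O}=\mf{X}|_{O}^{\cov}$, and by \cref{prop_intrinsic_virtual_decuspidaloid}(iv) its objects correspond bijectively to open subgroups of $\Pi(O)$, ordered by inclusion. This category is filtered. By \cref{anabelian_CAVC_more}(i)(iv)(vi), each object carries a canonical GD, namely a finite étale cover of $X^{\cl}$, possibly over a finite extension of $k$. The transition morphisms are finite étale and are reconstructed group-theoretically and functorially. Uniqueness in \cref{anabelian_CAVC}(i) ensures compatibility of composites, so we obtain an inverse system.

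Third, for step (d) and the assertions, I would take direct limits. Open subgroups of $\Pi(O)$ containing $\Delta(O)$ give the subsystem $X^{\cl}\times_{k}k'$ with $k'/k$ finite. The limit of their rings of global functions is therefore $\bar{k}$, which yields (i); the $G_{k}$-equivariance comes from the conjugation action on the index system. The limit of $X^{\cl}\times_{k}k'$ is $X^{\cl}\times_{k}\bar{k}$, which yields (ii). The limit of the function fields over the whole system is the union of the function fields of all finite étale covers of $X^{\cl}$, that is, an FFUC. By \cref{anabelian_CAVC}(ii)(3) its Galois group over $K(X^{\cl})$ is identified with $\Pi(O)$, which yields (iii). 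Any two FFUCs with such identifications are isomorphic $\Pi(O)$-equivariantly, by the usual Galois-theoretic uniqueness of the universal cover up to the choice of basepoint.

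Functoriality under isomorphisms $\Pi\simto\Pi'$ comes from \cref{construction_CAVC} and [\cite{SZM}, Theorem 5.13]. The main obstacle I expect is the precise identification of $F(O)$ with $\Delta_{X^{\cl}}\subset\Pi_{X^{\cl}}$ compatibly with the reconstruction, and the check that the action of $\Pi$ factors as claimed. Here I would first verify that the kernel $\Pi\surjto\Pi(O)$ acts trivially on $\mf{X}|_{O}$, since it is generated by cuspidal inertia stabilized under conjugation and is contained in every relevant $H$. I would then pin down $\Pi/\Delta\simto G_{k}$ using \cref{reconstruction_of_section_special_sym}(i).
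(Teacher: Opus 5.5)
Your proposal is correct and follows essentially the same route as the paper, whose proof simply defers to the results cited in steps (a)--(d). The only deviations are these: you obtain constructibility of $O$ by recognizing the full decuspidalization directly as the split proper family $X\times_{k}X^{\cl}\to X$ (via \cref{constructible_trivial_family}), whereas the paper further decuspidalizes the constructible diagonal decuspidalization of [\cite{SZM}, Proposition 6.1] and invokes \cref{anabelian_CAVC_more}(ii); and you leave implicit the uniqueness in (i), which follows from the slimness of $G_{k}$ ([\cite{MZK3}, Theorem 1.7(ii)(iii)]), since two such isomorphisms differ by a $k$-automorphism of $\bar{k}$ commuting with all of $G_{k}$.
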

\begin{proof}
    The algorithm and assertions stated in \cref{reconstruction_algorithm_genus_2} follow immediately from the various results cited in the statement of this algorithm.
    
\end{proof}

Next, we consider the case where the genus is $1$. First, we treat the situation in which the action of $\Pi_{[\pr^{2/1}_{X},s]}$ on $\cusp([\pr^{2/1}_{X},s])$ is trivial. In this case, one shows that every decuspidalization of $[\pr^{2/1}_{X},s]$ of type $(1,1)$ is a split pointed virtual curve (cf. \cref{defi_trivial_family}(ii)), admitting a splitting morphism onto the once-punctured curve associated to the unique smooth compactification $X^{\cl}$ of $X$, which is an open subscheme of an abelian variety. Moreover, for any two such decuspidalizations, one can characterize, in group-theoretic terms, a unique isomorphism between their respective virtual fundamental groups, which essentially corresponds to a ``translation'' of the abelian variety $X^{\cl}$ (cf. \cref{Notations and Terminologies}, the discussion entitled ``Fields, Schemes and Curves''). Then, by taking a suitable inverse limit and passing to the smooth compactification, one obtains an abelian variety that is naturally isomorphic to $X^{\cl}$. 


\begin{theo}\label{reconstruction_algorithm_genus_1}
    Let $k$ be an AF; $\bar{k}$ an algebraic closure of $k$; $G_{k}$ the Galois group $\gal(\bar{k}/k)$; $X$ a hyperbolic curve over $k$ of type $(1,r)$, where $r\geq 1$; $s:G_{k}\rightarrow \Pi_{X}$ a section of the natural surjection $\Pi_{X}\surjto G_{k}$. Write $\Pi$ for the virtual fundamental group $\Pi_{[\pr^{2/1}_{X},s]}$; $\Delta$ for the geometric virtual fundamental group $\Delta_{[\pr^{2/1}_{X},s]}$; $X^{\cl}$ for the unique smooth compactification of $X$ over $k$; $(\mf{X},F)\coloneqq(\cavc_{\Delta\subset \Pi}, F_{\Delta\subset \Pi})$ for the virtual decuspidaloid associated to $\Delta\subset \Pi$ (cf. \cref{CAVC_functor_defi}). Assume further that the action of $\Pi$ on $\cusp([\pr^{2/1}_{X},s])$ is trivial. Then one may construct various geometric objects (i.e., $\base(\Pi)$, $\sche(\Pi)$, and $f$ as defined in the subsequent algorithms) from the abstract profinite group $\Pi$ functorially with respect to isomorphisms of topological groups by the following algorithm:

    (a) One constructs $(\mf{X},F)$ from the abstract group $\Pi$ functorially with respect to isomorphisms of topological groups (cf. [\cite{SZM}, Theorem 5.1]; [\cite{SZM}, Theorem 5.13]; \cref{construction_CAVC}(ii)).

    (b) Denote by $\mf{S}$ the set of all subsets in $\cusp(\root(\mf{X}))$ (cf. \cref{prop_intrinsic_virtual_decuspidaloid}(ii)) whose complement in $\cusp(\root(\mf{X}))$ has cardinality $1$. Then every element of $\mf{S}$ is of admissible-type (cf. \cref{defi_admissible_type}). 
    Denote by $A\subset \obj(\mf{X})$ the set of decuspidalizations of $\root(\mf{X})$ induced by elements of $\mf{S}$ (cf. \cref{prop_intrinsic_virtual_decuspidaloid}(v)). It follows from \cref{1_1_is_constructible}(i) that every element of $A$ is constructible (cf. \cref{defi_constructible_object}). Let $R$, $R'$ be two distinct elements of $A$.
    By \cref{1_1_is_constructible}(ii), there exists a group isomorphism $\Pi(R)\simto \Pi(R')$ such that the (group-theoretic) conditions stated in \cref{1_1_is_constructible}(ii) hold. By \cref{anabelian_CAVC}(i) and \cref{generic_symmetry_genus_1}(i)(ii), 
    this group isomorphism is, in fact, unique up to an inner automorphism of $\Pi(R')$. Since $G_{k}$ is slim (cf. [\cite{MZK3}, Theorem 1.7(ii)(iii)]), it follows that this isomorphism is unique up to an inner automorphism of $\Delta(R')$. 
    Denote this unique $\Delta(R')$-outer morphism by $\sigma^{R,R'}$. By \cref{anabelian_CAVC}(ii), one constructs the isomorphism $[-1]^{R,R'}: \sch(\Delta(R)\subset\Pi(R))\simto \sch(\Delta(R)\subset\Pi(R)))$ associated to $\sigma^{R,R'}$. 
    Denote by $[-1]|_{R'}$ the automorphism of $\sch(\Delta(R')\subset\Pi(R'))$ (a hyperbolic curve of type $(1,1)$ whose unique smooth compactification is isomorphic to $X^{\cl}\times_{k}\bar{k}$ by \cref{1_1_is_constructible}(iii)) induced by the map given by multiplication by $-1$, where we regard $(\sch(\Delta(R')\subset\Pi(R')))^{\cl}\simto X^{\cl}\times_{k}\bar{k}$ as an abelian variety whose identity element is the unique rational cusp of $\sch(\Delta(R')\subset\Pi(R'))$. 
    Denote by $[1]^{R,R'}$ the composite $[-1]|_{R'}\circ [-1]^{R,R'}$. Let $R''$ be an element of $A$ distinct from both $R$ and $R'$. By \cref{generic_symmetry_genus_1}(ii)(iv), $[1]^{R,R''} = [1]^{R,R'}\circ [1]^{R',R''}$. 
    Therefore, we can construct a connected groupoid of schemes in which all automorphisms are trivial as follows:  the objects are the schemes $\sch(\Delta(R)\subset\Pi(R))$, as $R$ ranges over the elements of $A$, and the unique morphism from $\sch(\Delta(R)\subset\Pi(R))$ to $\sch(\Delta(R')\subset\Pi(R'))$ is given by $[1]^{R,R'}$. 
    Denote by
    \begin{displaymath}
        \sche(\Pi)
    \end{displaymath}
    the inverse limit of this connected groupoid (which is, in particular, a filtered category); by
    \begin{displaymath}
        \base(\Pi)
    \end{displaymath}
    the base field of $\sche(\Pi)$; by
    \begin{displaymath}
        f:\sche(\Pi) \to \sp(\base(\Pi))
    \end{displaymath}
    the naturally induced morphism.

    Moreover, the following assertions hold:

    (i) The action of $\Pi$ on $\base(\Pi)$ functorially induced by the conjugation action of $\Pi$ on itself factors through the quotient $\Pi\surjto G_{k}$. Moreover, there exists a $G_{k}$-equivariant isomorphism $\base(\Pi)\simto \bar{k}$.
    
    (ii) The action of $\Pi$ on $\sche(\Pi)$ functorially induced by the conjugation action of $\Pi$ on itself factors through the quotient $\Pi\surjto G_{k}$. Denote by $(\sche(\Pi))^{\cl}$ the unique smooth compactification of $\sche(\Pi)$ over $\base(\Pi)$. Then there exists a $G_{k}$-equivariant isomorphism $(\sche(\Pi))^{\cl}\simto X^{\cl}\times_{k}\bar{k}$.
\end{theo}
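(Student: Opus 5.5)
The plan is to verify the algorithm step by step against the geometric model, and then read off assertions (i) and (ii) from the functoriality of the constructions in \cref{anabelian_CAVC}.

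\textbf{Geometric model of $\mf{S}$ and $A$.} Since $\Pi$ acts trivially on $\cusp([\pr^{2/1}_{X},s])$, all $r$ cusps of $X$ are $k$-rational. The set $\cusp(\root(\mf{X}))$ then consists of $\diag_{X}$ together with the $r$ cusps $c_1,\dots,c_r$ of $X$, each of which gives a horizontal section of the compactified family $X\times_k X^{\cl}\to X$ (cf. \cref{reconstruction_of_section}(i)). An element of $\mf{S}$ removes all cusps but one, and this leaves genus $1$ with one cusp, so it is of admissible-type by \cref{defi_admissible_type}.

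\textbf{Constructibility of the objects of $A$.} Using \cref{Galois_actions_on_D} and \cref{decuspidalization_curve_theoretic}, each object $R\in A$ corresponds to a decuspidalization of $[\pr^{2/1}_{X},s]$ whose total space is $X\times_k(X^{\cl}\setminus\{c\})$, where $c$ is either some $c_i$ or the diagonal.
\begin{itemize}
\item If $c=c_i$, the family is visibly split, with splitting morphism to $X^{\cl}\setminus\{c_i\}$.
\item If $c$ is the diagonal, endow $X^{\cl}$ with a group law having origin $c_1$. The automorphism $(x,y)\mapsto(x,y-x)$ of \cref{generic_symmetry_genus_1}(v) identifies the family with the split family $X\times_k(X^{\cl}\setminus\{c_1\})$.
\end{itemize}
Hence \cref{constructible_trivial_family} shows that every element of $A$ is constructible, which is the content I expect \cref{1_1_is_constructible}(i) to give. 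It also shows that each $\sch(\Delta(R)\subset\Pi(R))$ is a once-punctured copy of $X^{\cl}\times_k\bar{k}$, which is \cref{1_1_is_constructible}(iii).

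\textbf{The transition isomorphisms and the cocycle condition.} For $R\neq R'$, there is an automorphism of the ambient family exchanging the two retained sections, of the form $y\mapsto c+c'-y$ in each fibre (for the diagonal case combined with $\tau$). It induces an isomorphism $\Pi(R)\simto\Pi(R')$ satisfying the group-theoretic conditions of \cref{1_1_is_constructible}(ii). Uniqueness up to $\Pi(R')$-inner automorphisms then follows from \cref{anabelian_CAVC}(i) together with \cref{generic_symmetry_genus_1}(i)(ii): an automorphism acting trivially on $\Delta^{\abt}$ is the identity. Slimness of $G_k$ then reduces the ambiguity to $\Delta(R')$-inner automorphisms.

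Geometrically $[-1]^{R,R'}$ is $y\mapsto c+c'-y$, so $[1]^{R,R'}=[-1]|_{R'}\circ[-1]^{R,R'}$ is a translation by \cref{generic_symmetry_genus_1}(iii)(iv). The cocycle identity $[1]^{R,R''}=[1]^{R,R'}\circ[1]^{R',R''}$ should then follow by comparing actions on $\Delta^{\abt}$, using \cref{generic_symmetry_genus_1}(ii)(iv): two automorphisms of a once-punctured elliptic curve that agree on $\Delta^{\abt}$ and map the cusp to the cusp coincide. The resulting groupoid is connected and all its automorphisms are trivial, so its inverse limit $\sche(\Pi)$ is canonically isomorphic to each $\sch(\Delta(R)\subset\Pi(R))$.

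\textbf{Assertions (i) and (ii).} By \cref{anabelian_CAVC}(ii)(1)(2), the $\Pi$-actions on each $\base(\Delta(R)\subset\Pi(R))$ and on each $\sch(\Delta(R)\subset\Pi(R))$ factor through $G(R)=G_k$. These actions are compatible with the transition maps $[1]^{R,R'}$, since the latter were constructed functorially from group-theoretic data, and this gives (i) and the first half of (ii). The isomorphism $(\sche(\Pi))^{\cl}\simto X^{\cl}\times_k\bar{k}$ comes from \cref{1_1_is_constructible}(iii) applied to any single $R$, and $G_k$-equivariance follows from the canonical GD. The identification $\base(\Pi)\simto\bar{k}$ uses \cref{Galois_group_geometricity_from_open_subgroup}.

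\textbf{Main obstacle.} I expect the hardest step to be the cocycle identity together with the $\Pi$-equivariance of the maps $[1]^{R,R'}$. One has to make sure that the multiplication-by-$-1$ maps $[-1]|_{R'}$, which are defined using the rational cusp of $R'$ as origin, really turn the involutions $[-1]^{R,R'}$ into translations that compose correctly. I would first check this on $\Delta^{\abt}$, where every map involved is either the identity or multiplication by $-1$, and then lift the identity using \cref{generic_symmetry_genus_1}(ii).
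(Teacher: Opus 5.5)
Your proposal is correct and follows essentially the paper's argument, most of which sits in \cref{1_1_is_constructible}. The shared ingredients are: split families and \cref{constructible_trivial_family} for constructibility; the fibrewise involutions $y\mapsto c+c'-y$, which act as $-1$ on $\Delta^{\abt}$ by \cref{generic_symmetry_genus_1}(iv); uniqueness and the cocycle identity from the rigidity statement \cref{generic_symmetry_genus_1}(ii); and equivariance from the functoriality in \cref{anabelian_CAVC}. The one minor difference is the object retaining $\diag_X$: you identify its family directly with a split one via the translation $(x,y)\mapsto(x,y-x)$, whereas the paper passes through the decuspidalization $Q'\cong[\pr^{2/1}_{X'},s']$ and swaps its two cusps with $\tau$; both routes work.
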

\begin{proof}
    The algorithm and assertions asserted in \cref{reconstruction_algorithm_genus_1} follow immediately from the various results cited in the statement of this algorithm.
\end{proof}

\begin{lemm}\label{1_1_is_constructible}
    In the situation of \cref{reconstruction_algorithm_genus_1}, let $O, O'\subset A$ be objects of $\mf{X}$ that lie in $A$. Then the following assertions hold:

    (i) $O$, $O'$ are constructible.

    (ii) Let $P$ (respectively, $P'$) be a field base change (cf. \cref{defi_intrinsic_virtual_decuspidaloid}(iii)) of $O$ (respectively $O'$) . Assume further that $G(P)$ coincides with $G(P')$, regarded as open subgroups of $G(\root(\mf{X}))$. Then, $(\Delta(P))^{\ab}$ coincides with $(\Delta(P'))^{\ab}$, regarded as subquotients of $\Pi(\root(\mf{X}))$.
    Moreover, there exists a group isomorphism $\sigma: \Pi(P)\simto \Pi(P')$ such that the following conditions hold:
    \begin{enumerate}
        \item The isomorphism $\sigma$ induces an isomorphism $\sigma^{\Delta}: \Delta(P)\simto \Delta(P')$.

        \item The isomorphism $\sigma$ induces the identity $G(P)=G(P')$ (\emph{not} merely up to an inner automorphism).

        \item The isomorphism $\sigma$ induces the endomorphism of $(\Delta(P))^{\ab}=(\Delta(P'))^{\ab}$ given by multiplication by $-1$.
    \end{enumerate}

    (iii) The unique compactification of the canonical scheme $(\sch(\Delta(O)\subset \Pi(O)))^{\Pi(O)/\Delta(O)}$ (cf. \cref{anabelian_CAVC}(ii)) over $\base(\Delta(O)\subset\Pi(O))^{\Pi(O)/\Delta(O)}$ is isomorphic to $X^{\cl}$. Moreover, $(\sch(\Delta(O)\subset \Pi(O)))^{\Pi(O)/\Delta(O)}$ is a hyperbolic curve of type $(1,1)$.
\end{lemm}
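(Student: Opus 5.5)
We will prove assertions (i) and (iii) together, and then (ii). The idea is to identify the objects of $A$ with split pointed virtual curves.

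\textbf{Step 1: the objects of $A$ are split.} Since the action of $\Pi$ on $\cusp([\pr^{2/1}_{X},s])$ is trivial, by [\cite{SZM}, Theorem 4.20(ii)] together with \cref{field_ext_curve_theoretic} we may replace the section as needed (it is stable under the relevant base changes). Recall that the fibre of $\pr^{2/1}_{X}$ over a point $x$ is $X\setminus\{x\}$, so $\cusp([\pr^{2/1}_{X},s])$ consists of $\diag_{X}$ together with the $r$ cusps of $X$. By the triviality of the action, these $r$ cusps are $k$-rational. Decuspidalizing all cusps except one fixes a cusp $c$, and there are two cases for the surviving cusp.

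\emph{Case $c\neq\diag_{X}$.} The resulting family is $X\times_{k}(X^{\cl}\setminus\{c\})\to X$. This is split, with splitting morphism onto the once-punctured curve $E_{c}\coloneqq X^{\cl}\setminus\{c\}$ of type $(1,1)$.

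\emph{Case $c=\diag_{X}$.} The family is $\{(x,y):y\neq x\}\subset X\times_{k}X^{\cl}$. This is again split, via the map $(x,y)\mapsto y-x$ (taking $X^{\cl}$ as an elliptic curve with some rational cusp as origin) onto $X^{\cl}\setminus\{0\}$.

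By \cref{constructible_trivial_family}, each $F(O)$ is then isomorphic to $\Delta_{E}\subset\Pi_{E}$ for a $(1,1)$-curve $E$ with compactification $X^{\cl}$. This gives constructibility and the type/compactification statement, i.e., (i) and (iii). Group-theoretically, we should check that the decuspidalization through $\mf{X}$ matches the geometric one; this follows from \cref{decusp_universal} and \cref{prop_intrinsic_virtual_decuspidaloid}(v).

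\textbf{Step 2: assertion (ii).} For field base changes $P,P'$ with common $G(P)=G(P')$, the corresponding open subgroup of $\Pi$ is the same. The abelianizations $(\Delta(P))^{\ab}$ and $(\Delta(P'))^{\ab}$ are the quotient of $\Delta/\cusp$, abelianized, by the cusps, i.e. $\Delta_{X^{\cl}}$ in both cases, by \cref{generic_symmetry_genus_1}(i). Thus they coincide as subquotients of $\Pi(\root(\mf{X}))$ once we check that both equal the image of $(\Delta/\cusp(\root(\mf{X})))^{\abt}$; this uses that killing a single cusp in genus one already abelianizes up to torsion.

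For $\sigma$, use the involution $\tau$ of \cref{generic_symmetry_genus_1}(v). More precisely, for distinct cusps $c,c'$, the automorphism $(x,y)\mapsto(x,\,x+c+c'-y)$ of $X\times X^{\cl}$ over $\pr^{1}$ swaps the fibrewise punctures $c$ and $c'$; for $\diag_{X}$ versus $c$, take $(x,y)\mapsto(x,\,x+c-y)$. Such an automorphism commutes with the projection to the base, so it induces the identity on $G(P)$ on the nose, by choosing the lift compatible with the section. By \cref{generic_symmetry_genus_1}(iv)(vii), it acts as $-1$ on the abelianized geometric group. This $\sigma$ gives conditions (1)–(3).

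\textbf{Main obstacle.} The hard part is making the isomorphisms exact rather than outer. Condition (2) requires identity on $G(P)$ strictly, and condition (3) requires compatibility of subquotient identifications inside $\Pi(\root(\mf{X}))$. For this, we would fix an open subgroup on which the automorphism's induced action on $\Pi$ is an honest automorphism preserving the given section, then adjust by an element of $\Delta$. This adjustment is possible because inner automorphisms by $\Delta$ act trivially on $G$, and act trivially on the abelianization after adjusting.
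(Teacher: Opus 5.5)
Your plan follows the paper's proof: you realize the relevant objects as split pointed virtual curves via \cref{constructible_trivial_family}, and you get $\sigma$ from the genus-one involutions of \cref{generic_symmetry_genus_1}. For (i) and (iii), your explicit splitting $(x,y)\mapsto y-x$ of $(X\times_{k}X^{\cl})\setminus D\to X$ in the case $c=\diag_{X}$ is a valid shortcut. The paper instead identifies the two-cusp object with $[\pr^{2/1}_{X'},s']$ via [\cite{SZM}, Lemma 6.10(iii)] and transports constructibility along $\tau$. Your swap $(x,y)\mapsto(x,x+c-y)$ of $\diag_{X}$ and $c$ is exactly the paper's $\tau$ with $c$ as origin.

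\textbf{The step that fails.} The involution you give for two distinct non-diagonal cusps $c\neq c'$ is wrong. Under $(x,y)\mapsto(x,x+c+c'-y)$:
\begin{itemize}
\item the constant section $y=c$ goes to the non-constant section $y=x+c'$;
\item the constant section $y=c'$ goes to $y=x+c$;
\item the diagonal goes to $y=c+c'$.
\end{itemize}
So this map does not preserve $X\times_{k}(X^{\cl}\setminus\{c,c'\})$, and it does not carry the object with surviving cusp $c$ to the one with surviving cusp $c'$. It therefore gives no isomorphism $F(O)\simto F(O')$. As written, the case of (ii) in which neither $\cusp(O)$ nor $\cusp(O')$ contains $\diag_{X}$ is not proved.

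\textbf{The fix.} The correct involution has no $x$-dependence. Take $\id_{X}\times\iota$ with $\iota(z)=c+c'-z$, acting on the split family $X\times_{k}(X^{\cl}\setminus\{c,c'\})\to X$. This family underlies the decuspidalization of $\root(\mf{X})$ whose surviving cusps are exactly $c,c'$, which is a common parent of $O$ and $O'$; working there is what makes condition (3) meaningful for the subquotient identification. By \cref{constructible_trivial_family}, the effect of $\id_{X}\times\iota$ on $\Delta_{X^{\cl}}$ is that of $\iota$, which is $-1$ by \cref{generic_symmetry_genus_1}(iv). This is precisely the paper's reduction to the statement ($*$) about a type-$(1,2)$ curve.

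You cannot avoid this by composing two diagonal swaps $c\leftrightarrow\diag_{X}\leftrightarrow c'$. That composite is the fibrewise translation $y\mapsto y+c'-c$, which acts as $+1$ on the abelianization.

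\textbf{Two smaller remarks.}
\begin{itemize}
\item The case $O=O'$, which you skip, is handled by $[-1]$ on the $(1,1)$-curve furnished by (i).
\item Your ``main obstacle'' is easier than you suggest, and no open subgroup is needed. Let $h$ be a base-preserving automorphism of the total space $Z$, and $p\colon Z\to X$ the projection. Since $p_{\ast}$ is surjective, you can choose the representative of $h_{\ast}$ with $p_{\ast}\circ h_{\ast}=p_{\ast}$ on the nose. That representative preserves $p_{\ast}^{-1}(s(G_{k}))$ and is the identity on $G_{k}$, which gives (2). Because $\Pi_{X}$ is slim, any two such representatives differ by conjugation by an element of $\Delta$, which acts trivially on $\Delta^{\ab}$. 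So (3) is well posed and follows from the cited computations.
\end{itemize}
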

\begin{proof}
    First, we consider assertion (i). It suffices to show that $O$ is constructible. By definition, $\cusp(O)$ may be regarded as a subset of $\cusp(\root(\mf{X}))$ of cardinality $1$. Denote by $\alpha\in \cusp(\root(\mf{X}))$ the unique element of $\cusp(O)$; $\diag_{X}\in \Cusp(\root(\mf{X}))$ the element associated to the decuspidalization $[\pr^{2/1}_{X},s]\to [\pr^{1}_{X},s]$; $Q$ the decuspidalization of $\root(\mf{X})$ induced by $\{\diag_{X}\}$. By \cref{constructible_trivial_family}, since $[\pr^{1}_{X},s]$ is a split family of curves, $Q$ is constructible.
    If $\alpha\neq \diag_{X}$, then $O$ is a (possibly trivial) decuspidalization of $Q$. By \cref{constructible_via_subjugate}, $O$ is constructible. 
    Therefore, it suffices to consider the case where $\alpha = \diag_{X}$. Let $\beta\neq \alpha$ be an arbitrary element of $\cusp(\root(\mf{X}))$. 
    By \cref{constructible_trivial_family}, one may regard $\beta$ as an element of $\cusp(X)$. Since the action of $\Pi$ on $\cusp([\pr^{2/1}_{X},s])$ is assumed to be trivial, $\beta$ corresponds to a rational point $x$ of $X^{\cl}$. Denote by $X'$ the complement $X^{\cl}\setminus \{x\}$ of $x$ in $X^{\cl}$ (so $X'$ is of type $(1,1)$). Write $s':G_{k}\injto \Pi_{X'}$ for the section induced by $s$.
    Denote by $Q'$ the decuspidalization of $\root(\mf{X})$ induced by $\cusp(\root(\mf{X}))\setminus\{\alpha,\beta\}$. By repeated application of the isomorphism between the second lines of the first and second displays of [\cite{SZM}, Lemma 6.10(iii)], there exists an isomorphism $\phi:\Pi_{[\pr^{2/1}_{X'},s']} \simto \Pi(Q')$ such that the pair $([\pr^{2/1}_{X'},s'], \phi)$ is a VD of $\Delta(Q')\subset \Pi(Q')$. Denote by $O'$ the decuspidalization of $Q'$ induced by $\{\alpha\}$.
    By \cref{generic_symmetry_genus_1}(v)(vi) (where we think of $\beta$ as the identity element), there exists an isomorphism from $\Delta(O)\subset \Pi(O)$ to $\Delta(O')\subset \Pi(O')$ induced by the element ``$\tau$'' appearing in \cref{generic_symmetry_genus_1}(v)(vi). Moreover, by the previous arguments, $O'$ is constructible. Thus, $O$ is constructible.  This completes the proof of assertion (i).

    Next, we consider assertion (ii). We maintain the notations introduced in the proof of assertion (i). By passing to the base change corresponding to the field extension of $k$ determined by the equality $G(P)=G(P')$, one may assume without loss of generality that $P=O$ and $P'=O'$.  Also, we may assume without loss of generality that $O\neq O'$. If either $\cusp(O)$ or $\cusp(O')$ contains $\diag_{X}$, then the arguments involving ``$Q'$'' and ``$\tau$'' in the proof of assertion (i) already yield an example of $\sigma$ (cf. \cref{generic_symmetry_genus_1}(v)(vi)). Therefore, it suffices to consider the case where neither $\cusp(O)$ nor $\cusp(O')$ contains $\diag_{X}$.
    Then $O$ and $O'$ are two distinct decuspidalizations of $Q$ of type $(1,1)$. Since $\pr^{2}:X\times_{k}X\to X$ is a splitting morphism of $[\pr^{1}_{X},s]$, it follows from \cref{constructible_trivial_family} that it suffices to establish the following fact:

    ($*$): Let $Y$ be a smooth curve over $k$ of type $(1,2)$ with rational cusps. Let $x,y$ be the two points in its compactification $Y^{\cl}$. View $Y^{\cl}$ as an abelian variety by taking $x$ as the identity element. Define $\tau:Y\to Y$ by $z\mapsto y-z$. Then, up to an inner automorphism of $\Pi_{Y^{\cl}}$, the induced map $\tau_{\ast}$ coincides with the map given by multiplication by $-1$ on $\Delta_{Y^{\cl}}$.

    On the other hand, this assertion follows immediately from \cref{generic_symmetry_genus_1}(iv).  This completes the proof of assertion (ii).

    Next, we consider assertion (iii). We maintain the notations introduced in the proof of assertion (i). By assertion (ii), it suffices to show assertion (iii) for a single choice of $O$. Thus, we may assume that $O$ is a decuspidalization of $Q$. It then follows from \cref{constructible_trivial_family} that assertion (iii) holds.
\end{proof}


Finally, we obtain the following partial generalization of \cref{reconstruction_algorithm_genus_1} in the case where we no longer require that the action of $\Pi$ on $\cusp([\pr^{2/1}_{X},s])$ is trivial, but we do assume that $X$ admits a rational point.

\begin{theo}\label{validity_of_descent_genus_1}
    Let $k$ be an AF; $\bar{k}$ an algebraic closure of $k$; $G_{k}$ the Galois group $\gal(\bar{k}/k)$; $X$ a hyperbolic curve over $k$ of type $(1,r)$, where $r\geq 1$; $s:G_{k}\rightarrow \Pi_{X}$ a section of the natural surjection $\Pi_{X}\surjto G_{k}$. Write $\Pi$ for the virtual fundamental group $\Pi_{[\pr^{2/1}_{X},s]}$; $\Delta$ for the geometric virtual fundamental group $\Delta_{[\pr^{2/1}_{X},s]}$; $X^{\cl}$ for the unique smooth compactification of $X$ over $k$; $(\mf{X},F)\coloneqq(\cavc_{\Delta\subset \Pi}, F_{\Delta\subset \Pi})$ for the virtual decuspidaloid associated to $\Delta\subset \Pi$ (cf. \cref{CAVC_functor_defi}); $\Pi'\subset \Pi$ for the kernel of the natural action of $\Pi$ on $\cusp([\pr^{2/1}_{X},s])$; $k'$ for the finite Galois extension of $k$ corresponding to $\Pi'$ (where we note that $\Pi'$ contains $\Delta$); $Y$ for the field base change $X\times_{k}k'$;
    $s':G_{k'}\injto \Pi_{Y}$ for the restriction of $s$. 
    By construction, one verifies immediately that $\Pi'$ is naturally isomorphic to $\Pi_{[\pr^{2/1}_{Y},s']}$. Moreover, $\Pi_{[\pr^{2/1}_{Y},s']}$ satisfies the conditions of \cref{reconstruction_algorithm_genus_1}. Thus, one can construct $\base(\Pi')$, $\sche(\Pi')$, and $f:\sche(\Pi')\to\sp(\base(\Pi'))$ from the abstract profinite group $\Pi'$ in a manner that is functorial with respect to isomorphisms of topological groups. In particular, $\base(\Pi')$ and $\sche(\Pi')$ admit  natural $\Pi$-actions functorially induced by the conjugation action of $\Pi$ on $\Pi'$.
    Then the following assertions hold:

    (i) The action of $\Pi$ on $\base(\Pi')$ and $\sche(\Pi')$ factors through the quotient $\Pi\surjto G_{k}$. Moreover, there exists a $G_{k}$-equivariant isomorphism $\base(\Pi')\simto \bar{k}$.
    
    (ii) 
    Denote by $(\sche(\Pi'))^{\cl}$ the unique smooth compactification of $\sche(\Pi')$ over $\base(\Pi')$. Assume further that $X$ admits a rational point.  Then there exists a $G_{k}$-equivariant isomorphism $(\sche(\Pi'))^{\cl}\simto X^{\cl}\times_{k}\bar{k}$.
\end{theo}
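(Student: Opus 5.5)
The plan is to exploit the functoriality of the construction in \cref{reconstruction_algorithm_genus_1}. Since $\Pi'$ is normal in $\Pi$, conjugation by $\gamma\in\Pi$ gives an automorphism of $\Pi'$. By functoriality this induces automorphisms of $\base(\Pi')$ and $\sche(\Pi')$, compatible with $f$.

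For (i), first note that $\Delta\subset\Pi'$. By \cref{reconstruction_algorithm_genus_1}(i) applied to $\Pi'$, the action of $\Pi'$ on $\base(\Pi')$ and $\sche(\Pi')$ factors through $\Pi'/\Delta\simto G_{k'}$. Hence $\Delta$ acts trivially, and the action of $\Pi$ factors through $\Pi/\Delta\simto G_k$.

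Next, fix the $G_{k'}$-equivariant isomorphism $\base(\Pi')\simto\bar{k}$ of \cref{reconstruction_algorithm_genus_1}(i). The action of $G_k$ on $\base(\Pi')$ restricts to the given action of $G_{k'}$. Both actions $G_k\to\Aut(\bar{k})$, the transported one and the standard one, are FT morphisms whose restrictions to the open subgroup $G_{k'}$ coincide. By \cref{Galois_group_geometricity_from_open_subgroup}(ii) they are conjugate. Composing the isomorphism with the conjugating element then yields a $G_k$-equivariant isomorphism $\base(\Pi')\simto\bar{k}$. The conjugating element centralizes the image of $G_{k'}$, and that centralizer is trivial by slimness. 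So in fact the original isomorphism is already $G_k$-equivariant.

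For (ii), \cref{reconstruction_algorithm_genus_1}(ii) gives a $G_{k'}$-equivariant isomorphism $\psi:(\sche(\Pi'))^{\cl}\simto X^{\cl}\times_k\bar{k}$. This isomorphism is canonical only up to the ambiguity coming from the choice of origin (translations), together with $[-1]$. Here I use that $\sche(\Pi')$ is the limit of the groupoid glued by the maps $[1]^{R,R'}$, which correspond to translations of the elliptic curve. The $G_k$-action on the left transports, via $\psi$, to a semilinear action of $G_k$ on $E_{\bar{k}}:=X^{\cl}\times_k\bar{k}$ extending the standard $G_{k'}$-action. For $\sigma\in G_k$, write $\rho(\sigma)$ for the transported action and $\sigma_{\mathrm{std}}$ for the standard one. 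Then $c_\sigma:=\rho(\sigma)\circ\sigma_{\mathrm{std}}^{-1}$ is an automorphism of $E_{\bar{k}}$ over $\bar{k}$.

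I expect $c_\sigma$ to act trivially on $\Delta_{E}$, up to inner automorphisms. The reason is that the isomorphisms $\sigma^{R,R'}$ are characterized group-theoretically (\cref{1_1_is_constructible}(ii)) and are compatible with the conjugation by $\Pi$. Granting this, \cref{generic_symmetry_genus_1}(iii) shows that $c_\sigma$ is a translation. Hence $\sigma\mapsto c_\sigma$ is a $1$-cocycle of $G_k$ with values in $E(\bar{k})$, trivial on $G_{k'}$. It defines a class in $H^1(G_k,E)$, and therefore a torsor $T$ of $X^{\cl}$ with $(\sche(\Pi'))^{\cl}\simto T\times_k\bar{k}$ equivariantly.

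The main obstacle is showing that this torsor is trivial. This is where the hypothesis that $X$ has a rational point enters. A rational point $x\in X(k)$ is either a non-cusp point, whose decomposition group gives a section, or a cusp. The aim is to exhibit a $G_k$-fixed point of $(\sche(\Pi'))^{\cl}$. My first attempt would use the cusps of the various $\sch(\Delta(R)\subset\Pi(R))$: $\Pi$ permutes the elements of $A$ through its action on $\cusp$, and the groupoid maps $[1]^{R,R'}$ identify them with points of $E$ corresponding to the cusps $\beta\in\cusp(X)$, and to $\diag_X$, which corresponds to the point underlying $s$. I would then show that the image of $x$, transported via these identifications, is a $G_k$-fixed point. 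Concretely, the cocycle $c_\sigma$ should equal the difference $\sigma(P)-P$ for the point $P$ corresponding to $x$, which makes it a coboundary. Then $T\simeq X^{\cl}$, and the twisted isomorphism is $G_k$-equivariant. If this direct comparison fails, the fallback is to use \cref{reconstruction_of_section}(viii) to reduce to $s$ arising from the rational point, via \cref{devissage_of_section}, and verify the fixed point there.
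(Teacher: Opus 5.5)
Your proof of (i) is fine, and more direct than the paper's. Restricting to $\Pi'$ and using \cref{reconstruction_algorithm_genus_1}(i) shows that $\Delta$ acts trivially. The transported and standard actions of $\sigma$ on $\bar{k}$ differ by an element centralizing the open image of $G_{k'}$, so slimness makes the given identification $\base(\Pi')\simto\bar{k}$ automatically $G_k$-equivariant.

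For (ii), your framework matches the paper's: transport the action via $\psi$, show that $c_\sigma$ is a translation, then show the cocycle is a coboundary. The paper takes $\psi$ to be $F_\alpha^{-1}$, obtains $c_\theta=\sigma^{\theta(\alpha),\alpha}$, and conjugates by a translation. However, the decisive step fails as you propose it, because the image of $x$ is not $G_k$-fixed.

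In the paper's notation, the identifications with $V=(\sche(\Pi'))^{\cl}$ are the maps $F_\alpha$, $\alpha\in S$. One has $\theta_V(F_\alpha(x'))=F_{\theta(\alpha)}(\theta_{W'}(x'))=F_{\theta(\alpha)}(x')$. On the other hand, $F_{\theta(\alpha)}^{-1}\circ F_\alpha=\sigma^{\alpha,\theta(\alpha)}$ is the nontrivial translation $p_\alpha\mapsto p_{\theta(\alpha)}$ whenever $\theta(\alpha)\neq\alpha$. So the points $F_\alpha(x')$ are permuted exactly like the cusps in $S$, and none is fixed unless $S$ contains a $k$-rational cusp. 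Equivalently, since $x$ is $k$-rational, $\sigma(P)-P=0$ for the point $P$ it defines. Your identity $c_\sigma=\sigma(P)-P$ would then force $c$ to vanish, which it does not.

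The fallback does not help either. \cref{reconstruction_of_section}(viii) and \cref{devissage_of_section} concern whether $s$ arises from a rational point, which is neither assumed nor relevant to the descent datum on $\sche(\Pi')$.

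The missing idea is that the fixed point is the cusp, not $x$. The group $\Pi$ acts by automorphisms of the open type-$(1,1)$ curve $\sche(\Pi')$, so the unique cusp $a$ of its compactification is fixed for purely functorial reasons. In the paper this reads $\theta_V(a)=\theta_V(F_\alpha(p_\alpha))=F_{\theta(\alpha)}(p_{\theta(\alpha)})=a$. Take $x'$ as origin and put $P\coloneqq\psi(a)$. Then $c_\sigma=P-\sigma(P)$, and $T_{-P}\circ\psi$ is $G_k$-equivariant. This is precisely the paper's conjugation of $s^{\dagger}$ into $s^{\ddagger}$ by the translation $T$ sending $x'$ to $p_\alpha$.

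So the rational point $x$ plays a different role from the one you assign it: it only gives $X^{\cl}$ a Galois-stable group law. Without it, the descended curve still has the rational point coming from $a$ and the same Jacobian as $X^{\cl}$, since translations act trivially on $\mathrm{Pic}^0$. It is therefore $\mathrm{Jac}(X^{\cl})$, which is isomorphic to $X^{\cl}$ only when $X^{\cl}(k)\neq\emptyset$.

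Finally, you only \emph{expect} that $c_\sigma$ is a translation; this needs an actual argument. All geometric fundamental groups of the proper curves involved are the same quotient of $\Delta$ (\cref{generic_symmetry_genus_1}(i)). The groupoid maps $[1]^{R,R'}=[-1]|_{R'}\circ[-1]^{R,R'}$ induce the identity on that quotient. Then \cref{generic_symmetry_genus_1}(iii) applies.
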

\begin{proof}
    First, we consider assertion (i). Denote by $\diag_{X}\in \Cusp(\root(\mf{X}))$ the element associated to the decuspidalization $[\pr^{2/1}_{X},s]\to [\pr^{1}_{X},s]$. By definition, $\diag_{X}$ is stabilized by the action of $\Pi$. Therefore, $\{\diag_{X}\}$ is an admissible subset of $\Cusp(\root(\mf{X}))$. Denote by $O$ the decuspidalization of $\root(\mf{X})$ induced by $\Cusp(\root(\mf{X}))\setminus\{\diag_{X}\}$. Denote by $O'$ the unique field base change of $O$ such that the natural image of $G(O')$ in $G(\root(\mf{X})) = G_{k}$ is the open subgroup $G_{k'}$ induced by $k'$. By the content of the algorithm of \cref{reconstruction_algorithm_genus_1}, one verifies immediately that $O'$ is constructible, and that there exists a $G_{k'}$-equivariant isomorphism $\sch(\Delta(O')\subset\Pi(O'))\simto \sche(\Pi')$. 
    By \cref{anabelian_CAVC_more}(iii), $O$ is constructible.  Thus, we may apply \cref{anabelian_CAVC}.  By properties (1), (2) of \cref{anabelian_CAVC}(ii), the action of $\Pi$ on $\base(\Delta(O)\subset\Pi(O))$ and $\sche(\Delta(O)\subset\Pi(O))$ --- which, by the content of the algorithm of \cref{reconstruction_algorithm_genus_1}, may be identified, respectively and in a fashion compatible with the various actions of $\Pi$, with $\base(\Pi')$ and $\sche(\Pi')$ --- factors through the quotient $\Pi\surjto G_{k}$.  Finally, it follows from \cref{reconstruction_algorithm_genus_1}(i), together with \cref{Galois_group_geometricity_from_open_subgroup}(i), that $\base(\Delta(O)\subset\Pi(O))$ admits a $G_{k}$-equivariant isomorphism with $\bar{k}$.  This completes the proof of assertion (i).


    Next, we consider assertion (ii). 
    Let $S\subset\Cusp(\root(\mf{X}))$ be a nonempty subset stabilized by the action of $\Pi$ such that $\diag_{X}\notin S$. Denote by $P$ the decuspidalization of $\root(\mf{X})$ induced by $\Cusp(\root(\mf{X}))\setminus S$ (cf.\ \cref{prop_intrinsic_virtual_decuspidaloid}(v)); by $P'$ the unique field base change (cf.\ \cref{defi_intrinsic_virtual_decuspidaloid}(iii)) of $P$ such that the natural image of $G(P')$ in $G(\root(\mf{X})) = G_{k}$ is the open subgroup $G_{k'}$ induced by $k'$.
    By \cref{anabelian_CAVC_more}(ii) and [\cite{SZM}, Proposition 6.1], $P$ is constructible. Moreover, since the naturally induced morphism $\Pi(P')\to\Pi(P)$ is of geo-iso-type (cf.\ \cref{defi_intrinsic_virtual_decuspidaloid}(iii); \cref{defi_CAVC_morphism_type}(i)), hence of cov-type (cf.\ \cref{abs_nonsense_CAVC_morphism}(ii)), it follows from \cref{anabelian_CAVC_more}(i) that $P'$ is constructible. Note that one may regard $\cusp(P')$ as the set $S$.

    Denote by $W$ (respectively, $W'$) the unique smooth compactification of the canonical scheme $(\sch(\Delta(P)\subset\Pi(P)))^{G(P)}$ (respectively, $(\sch(\Delta(P')\subset\Pi(P')))^{G(P')}$) (cf.\ \cref{anabelian_CAVC}(ii)). By \cref{anabelian_CAVC_more}(iv)(vi), $W'$ is naturally isomorphic to $W\times_{k}k'$. Moreover, by \cref{anabelian_CAVC_more}(v)(vii) and [\cite{SZM}, Proposition 6.1], $W$ is naturally isomorphic to $X^{\cl}$.

    Denote by $V$ the unique smooth compactification of $(\sche(\Pi'))^{G_{k'}}$ over $(\base(\Pi'))^{G_{k'}}$ (cf.\ \cref{reconstruction_algorithm_genus_1}(ii)); by $a$ the $k'$-rational point of $V$ corresponding to the unique cusp of $(\sche(\Pi'))^{G_{k'}}$.
    Let $\alpha\in S$ be an element. Then $\alpha$ corresponds to a $k'$-rational point $p_{\alpha}\in W'$ (where we note that $\Pi'$ acts trivially on $\cusp(P')=S$, by the definition of $k'$). Denote by $Q'_{\alpha}$ the decuspidalization of $P'$ induced by $S\setminus\{\alpha\}$ (cf.\ \cref{prop_intrinsic_virtual_decuspidaloid}(v)). Then $Q'_{\alpha}$ is of type $(1,1)$, and one verifies immediately that $Q'_{\alpha}$ belongs to the set ``$A$'' defined in \cref{reconstruction_algorithm_genus_1}(b).

    Denote by $F_{\alpha}$ the isomorphism $W'\simto V$ obtained by taking the isomorphism induced between the respective unique smooth compactifications by the natural isomorphism
    \begin{displaymath}
        (\sch(\Delta(Q'_{\alpha})\subset\Pi(Q'_{\alpha})))^{G(Q'_{\alpha})}\simto(\sche(\Pi'))^{G_{k'}},
    \end{displaymath}
    where we note that $\sch(\Delta(Q'_{\alpha})\subset\Pi(Q'_{\alpha}))$ is an object of the filtered category whose inverse limit is the scheme $\sche(\Pi')$ (cf.\ \cref{reconstruction_algorithm_genus_1}(b)), and that the smooth compactification of $(\sch(\Delta(Q'_{\alpha})\subset\Pi(Q'_{\alpha})))^{G(Q'_{\alpha})}$ is naturally isomorphic to $W'$ (cf.\ \cref{anabelian_CAVC_more}(v)(vii), applied to the morphism of decusp-type from $P'$ to $Q'_{\alpha}$). By construction, $F_{\alpha}(p_{\alpha})=a$.

    Now, let $\alpha,\beta\in S$ be two (not necessarily distinct) elements. Denote by $\sigma^{\alpha,\beta}$ the automorphism of $W'$ such that $F_{\beta}\circ \sigma^{\alpha,\beta} = F_{\alpha}$. If $\alpha=\beta$, then $\sigma^{\alpha,\alpha}=\id$. If $\alpha\neq\beta$, then, by chasing the group isomorphisms induced by the geometric fundamental groups of the various proper curves that appear in the discussion --- where we observe that all such geometric fundamental groups may be canonically identified with the same quotient of $\Delta$ (cf.\ \cref{generic_symmetry_genus_1}(i)) --- one verifies immediately from \cref{generic_symmetry_genus_1}(iii) that $\sigma^{\alpha,\beta}$ is a translation (cf.\ \cref{Notations and Terminologies}, the discussion entitled ``Fields, Schemes and Curves''). Thus, regardless of whether or not $\alpha=\beta$, $\sigma^{\alpha,\beta}$ is the unique translation sending $p_{\alpha}$ to $p_{\beta}$. In particular, $\sigma^{\alpha,\beta}\circ \sigma^{\beta,\alpha} = \id$.

    Finally, we consider the action of $\gal(k'/k)\coloneqq G_{k}/G_{k'}$ on $V$. Let $\theta\in \gal(k'/k)$. Denote by $\theta_{V}$ the image of $\theta$ under the natural action of $\Pi$ on $((\sche(\Pi'))^{G_{k'}})^{\cl}$ (cf.\ assertion (i)); similarly, denote by $\theta_{W'}$ the image of $\theta$ under the automorphism of $W'$ induced by the natural isomorphism $W'\simto W\times_{k}k'$. Suppose that $\theta$ sends $\alpha$ to $\beta$, i.e., $\theta(\alpha)=\beta$, under the natural action on $S$. By the functoriality of the construction of $\sche(\Pi')$ (cf.\ \cref{reconstruction_algorithm_genus_1}(b); \cref{anabelian_CAVC}(ii)) with respect to the conjugation action of $\Pi$ on $\Pi'$, we have
    \begin{displaymath}
        \theta_{V}\circ F_{\alpha} = F_{\beta}\circ \theta_{W'}.
    \end{displaymath}
    Combining this identity with $F_{\beta} = F_{\alpha}\circ \sigma^{\beta,\alpha}$ (which follows from $\sigma^{\alpha,\beta}\circ\sigma^{\beta,\alpha}=\id$), we obtain
    \begin{displaymath}
        \theta_{V}\circ F_{\alpha} = F_{\alpha}\circ \sigma^{\beta,\alpha}\circ \theta_{W'}.
    \end{displaymath}
    Recall from the above discussion that $W$ is naturally isomorphic to $X^{\cl}$.  Thus, by [\cite{SZM}, Corollary 6.7] and the equation of the above display, to prove assertion (ii), it suffices to verify that the two group homomorphisms
    \begin{displaymath}
        s^{\dagger}, s^{\ddagger}: \gal(k'/k)\longrightarrow \Aut_{k}(W'),
    \end{displaymath}
    where $\Aut_{k}(W')$ denotes the group of $k$-automorphisms of $W'$,  defined by
    \begin{displaymath}
        s^{\dagger}(\theta)\coloneqq\theta_{W'} \quad\text{and}\quad s^{\ddagger}(\theta)\coloneqq\sigma^{\theta(\alpha),\alpha}\circ\theta_{W'},
    \end{displaymath}
    where $\alpha\in S$ denotes a fixed element (cf.\ the definition of $F_{\alpha}$), are conjugate by an element of $\Aut_{k}(W')$. 
    (Here, we note that it follows from the equation $\theta_{V}\circ F_{\alpha} = F_{\alpha}\circ \sigma^{\theta(\alpha),\alpha}\circ \theta_{W'}$ that $s^{\ddagger}$ is indeed a group homomorphism.)
    On the other hand, by assumption, $X$ admits a $k$-rational point. Consequently, $W\simto X^{\cl}$ admits a $k$-rational point $x$. Denote by $x'$ the natural preimage of $x$ in $W'$. Then $\theta_{W'}(x')=x'$ for every $\theta\in\gal(k'/k)$ (since $x$ is a $k$-rational point of $W$), and $\theta_{W'}(p_{\alpha})=p_{\theta(\alpha)}$ (since the cusps of $\cusp(P')=S$ are permuted by $\theta$ compatibly with the corresponding $k'$-rational points of $W'$). Let $T$ denote the unique translation (cf.\ \cref{Notations and Terminologies}, the discussion entitled ``Fields, Schemes and Curves'') of $W'$ sending $x'$ to $p_{\alpha}$. Then, for every $\theta\in\gal(k'/k)$,
    \begin{displaymath}
        T\circ s^{\dagger}(\theta)\circ T^{-1} = T\circ\theta_{W'}\circ T^{-1} = \sigma^{\theta(\alpha),\alpha}\circ\theta_{W'} = s^{\ddagger}(\theta),
    \end{displaymath}
    where the second equality follows from the identity $\theta_{W'}\circ T_{a}\circ\theta_{W'}^{-1} = T_{\theta_{W'}(a)}$ (which holds since $\theta_{W'}$ preserves the group law on $W'$; here, we write $T_{a}$ for the translation by a point $a$) applied to $T=T_{p_{\alpha}-x'}$:
    \begin{displaymath}
        T\circ\theta_{W'}\circ T^{-1} = T_{(p_{\alpha}-x')-\theta_{W'}(p_{\alpha}-x')}\circ\theta_{W'} = T_{p_{\alpha}-p_{\theta(\alpha)}}\circ\theta_{W'} = \sigma^{\theta(\alpha),\alpha}\circ\theta_{W'}.
    \end{displaymath}
    This completes the proof of assertion (ii).

\end{proof}







\bibliographystyle{plain}
\bibliography{arithmetic_lib.bib}

\end{document}